\documentclass[11pt]{amsart}
\usepackage{amsmath}
\usepackage{amssymb}
\usepackage{amsfonts}
\usepackage{amsthm}
\usepackage{mathabx}
\usepackage{mathrsfs}
\usepackage{dirtytalk}



\newcommand*\norm[1]{ \left\| #1 \right\| }

\newcommand*\Z{ \mathbb{Z} }
\newcommand*\Q{ \mathbb{Q} }
\newcommand*\R{ \mathbb{R} }
\newcommand*\C{ \mathbb{C} }
\newcommand*\N{ \mathbb{N} }

\newcommand*\T{\mathbb T }
\newcommand\set[1]{\left\{ #1 \right\}}



\theoremstyle{definition}

\newtheorem{Remark}{Remark}

\theoremstyle{theorem}

\newtheorem{mythm}{Theorem}[section]

\newtheorem{mylemma}{Lemma}[section]

\newtheorem{mycor}{Corollary}[section]


\title[Continuity of the Lyapunov exponent for analytic cocycles]{Continuity of the Lyapunov exponent for analytic multi-frequency quasiperiodic cocycles}
\author{Matthew Powell}
\address{Department of Mathematics, University of California, Irvine CA, 92717}
\date{\today}

\begin{document}
\maketitle

\begin{abstract}
It is known that the Lyapunov exponent of analytic 1-frequency quasiperiodic cocycles is continuous in cocycle and, when the frequency is irrational, jointly in cocycle and frequency. In this paper, we extend a result of Bourgain to show the same continuity result for {\it multifrequency} quasiperiodic $M(2,\C)$ cocycles. Our corollaries include applications to multifrequency Jacobi cocycles with periodic background potentials.
\end{abstract}

A cocycle is an example of a dynamical system which arises in ergodic theory. A cocycle is a dynamical system on a vector bundle which preserves the linear bundle structure and induces a measure-preserving dynamical system on the base. A particular class of examples are the quasiperiodic cocycles defined over $(\C^2, \T^d),$ with the underlying dynamical system given by a shift on the $d$-dimensional torus, $\T^d \simeq \R^d/\Z^d \simeq [0,1]^d.$  More concretely, let $M(2,\C)$ be the set of $2\times 2$ matrices with complex entries, let $\T^d$ denote the $d$-dimensional torus, and for $\omega \in \T^d,$ let $T_\omega: \T^d \to \T^d$ be given by $T_\omega x = x + \omega.$ We call $\omega$ the frequency of the shift. A $d$-dimensional quasiperiodic cocycle is a pair $(A,\omega)\in C(\T^d, M(2,\C)) \times \T^d$ when viewed as a linear skew product: $(A,\omega)$ acting on $\C^2 \times \T^d$ with
\begin{equation}\label{CocycleDef}
(A,\omega)(w,x) = (A(x)w, T_\omega x).
\end{equation} 
The cocycle iterates are given by
 \begin{equation}
 (A, \omega)^N = (A_N, N\omega),
 \end{equation}
 where
 \begin{equation}
 A_N(x,\omega) = \prod_{j = N-1}^0 A(x + j\omega).
 \end{equation}
 Throughout this paper, we use the term cocycle to describe $A_N(x,\omega).$
 
 The study of quasiperiodic cocycles has immediate applications to the study of one-dimensional (and quasi-one-dimensional) quasiperiodic Schr\"odinger operators:
 \begin{equation}(H_{x,\omega}\psi)(n) = \psi(n -1) + \psi(n + 1) + v(x + n\omega)\psi(n),\label{eq:SEqn}\end{equation}
 where $v: \T^d \to \R.$ It is well known that solutions to the eigenequation $H\psi = E\psi$ may be recovered using the transfer matrix:
 $$\prod_{j = N - 1}^0 \begin{pmatrix} E - v(x + j\omega) & -1\\ 1 & 0\end{pmatrix}.$$
 This describes a cocycle, with 
 $$A(x) = \begin{pmatrix} E - v(x) & -1 \\ 1 & 0\end{pmatrix}.$$
 Cocycles of the above form are typically called Schr\"odinger cocycles. 
 
 Schr\"odinger cocycles are always $SL(2,\R)$ cocycles. Besides generality, our motivation for considering $M(2,\C)$ cocycles are the quasiperiodic Jacobi cocycles (c.f. \cite{MarxJacobi, JitomirskayaMarxReview}). A quasiperiodic Jacobi operator on $L^2(\Z)$ is given by \eqref{eq:JacobiEqn}.
 We can define an analogous cocycle to study such operators:
\begin{equation}\label{eq:JacobiCocycle}A(x) = \begin{pmatrix}
(E - v(x))& -\overline{a(x - \omega)} \\
a(x) & 0
\end{pmatrix}
\end{equation}
The fundamental difference, though, is that, based on the choice of function $a,$ the cocycle need not be $SL(2,\R).$ In fact, it could have zero determinant somewhere. Another popular example of non-$SL(2,\R)$ cocycles are the transfer matrices associated with orthogonal polynomials on the unit circle with quasiperiodic Verblunsky coefficients (c.f. \cite{SimonOPUC}).


Of particular interest to us is the Lyapunov exponent. We begin by defining
\begin{equation}
L_N'(A,\omega,x) = \frac 1 N \ln\norm{A_N(x,\omega)}.
\end{equation}
Denote
\begin{equation}
L_N'(A,\omega) = \int_{\T^d} L_N'(A,\omega,x).
\end{equation}
It follows by subadditivity considerations that the limit
\begin{equation}
L'(A,\omega) = \lim_{N\to\infty} \int_{\T^d} L_N'(A,\omega,x) dx
\end{equation}
exists.
We call $L'(A,\omega)$ the (upper) Lyapunov exponent of the cocycle $(A,\omega).$ 

In this paper, we study continuity properties of $L'(A,\omega).$ This has been studied extensively for analytic Schr\"odinger cocycles both when $d = 1$ and $d >1,$ as well as non-identically singular $M(2,\C)$ cocycles with when $d = 1.$ Up until now, continuity for non-identically singular $M(2,\C)$ cocycles when $d > 1$ is known only when the frequency satisfies a Diophantine condition. In this paper, we improve this to include all frequencies by extending Bourgain's multifrequency $SL(2,\C)$ result to cover the general $M(2,\C)$ case.

Let us now proceed to provide the formal definitions necessary to state our main theorem. 

We are interested in {\it analytic quasiperiodic cocycles}. That is, quasiperiodic cocycles $(A, \omega),$ where $A$ is taken to be an analytic $M(2,\C)$-valued function on $\T^d$ with an analytic extension, continuous up to the boundary, to the complex strip $|\Im z_j| < \rho, \rho > 0,$ for all $1 \leq j \leq d.$ We denote the space of such $A$ by $C_\rho(\T^d, M(2,\C)).$ We put a natural metric on the space of these cocycles:
\begin{equation}
d((A,\omega), (B,\omega')) = \norm{A - B}_\rho + \norm{\omega - \omega'}_{\T^d},
\end{equation}
where
\begin{equation}
\norm{A - B}_\rho = \sup_{z: |\Im z_j| < \rho} |A(z) - B(z)|
\end{equation}
and $\norm{\omega - \omega'}_{\T^d}$ is the usual norm on $\R^d/\Z^d = \T^d.$ Note that any analytic function on $\T^d$ has an analytic extension to {\it some} complex strip $|\Im z_j| < \rho, \rho > 0.$ This observation may be used to define an inductive topology on the space of all analytic cocycles: $\bigcup_{\rho > 0} C_\rho.$
Moreover, we will assume $\det(A(x))$ is not identically zero. 

\begin{Remark}
Note that any $M(2,\C)$ cocycle $A_N(x)$ for which $\det(A(x))$ is not identically zero can be renormalized to form an $SL(2,\C)$ cocycle (see e.g.\cite{JitomirskayaKosloverSchulteis}), however the resulting cocycle may lose boundedness if $\det(A(x))$ has zeros. This is precisely the nature of difficulty when extending $SL(2,\C)$ results to the $M(2,\C)$ case.
\end{Remark}

The (upper) Lyapunov exponent is then defined as
\begin{equation}
L'(A,\omega) = \frac 1 N \int_{\T^d} \ln \norm{A_N(x,\omega)}dx.
\end{equation}
Note that, while $L'(A, \omega)$ need not be non-negative, the related object
\begin{equation}
L(A,\omega) = \lim_{N\to\infty} \int_{\T^d} L_N(\tilde A,\omega,x) dx,
\end{equation}
is, where $\tilde A \in SL(2,\C)$ is a renormalization of $A:$
\begin{equation}
\tilde A = \frac 1 {|\det A|^{1/2}} A.
\end{equation}
Moreover, $L_N$ and $L'_N$ are related by the following relation:
\begin{equation}
L_N(A,\omega) = L'_N(A,\omega) - \frac{1}{2} \int_{\T^d} \ln|\det(A(x))| dx.
\end{equation}
It follows that, when $\ln|\det(A(x))| \in L^1,$ both $L$ and $L'$ share the same regularity properties. In particular, if one is continuous, in some sense, then so is the other. 
Throughout, we will occasionally write $L_N(A, x)$ or $L_N(x)$ in place of $L_N(A,\omega, x),$ when there can be no ambiguity. Similarly, we will occasionally write $L_N(A)$ in place of $L_N(A,\omega)$ when $\omega$ is clear.

\begin{Remark}
We would like to make a note about a convention that we use. Throughout this paper, we use capital letters (e.g. $C, C',$ etc.) to denote constants which are sufficiently large, and lower-case letters to denote constants which are sufficiently small (e.g. $c, c',$ etc.). How large/small depends, unless otherwise specified, on the dimension, $d,$ and uniform measurements of the cocycle, $A.$ 
\end{Remark}

We prove the following:
\begin{mythm}\label{Thm:MyMainThm}
Suppose $\omega = (\omega_1,...,\omega_d) \in \T^d.$ Let $(A, \omega)$ be an analytic quasiperiodic $M(2,\C)$-cocycle.
Suppose, moreover, that $\det(A) \not\equiv 0.$ Then $L(A,\omega)$ satisfies the following.
\begin{enumerate}
	\item[(a)] \label{Thm:CocycleCont} $L(A,\omega)$ is continuous in $A$ for any $\omega \in \T^d.$ 
	\item[(b)] $L(A,\omega)$ is jointly continuous in $A$ and $\omega$ for $\omega$ such that $k \cdot \omega \not= 0$ for any $k \in \Z^d\backslash\set{0}.$ 
\end{enumerate}
\end{mythm}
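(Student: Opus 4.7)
The plan is to reduce to Bourgain's multifrequency $SL(2,\C)$ continuity theorem via the identity noted in the excerpt,
\begin{equation*}
L_N(A,\omega) = L_N'(A,\omega) - \tfrac{1}{2}\int_{\T^d}\ln|\det A(x)|\,dx.
\end{equation*}
Since the excerpt already observes that $L$ and $L'$ share regularity properties whenever $\ln|\det A|\in L^1$, it suffices to establish (i) continuity in $A$ (and joint continuity in $A,\omega$) of $L'$ itself and (ii) continuity of the map $A\mapsto \int_{\T^d}\ln|\det A(x)|\,dx$ on the open set where $\det A\not\equiv 0$. For (ii) the argument is essentially classical: $z\mapsto \ln|\det A(z)|$ is pluri-subharmonic on the strip $|\mathrm{Im}\,z_j|<\rho$ with uniform upper bound $2\ln\|A\|_\rho$, and for $B$ close to $A$ in $\|\cdot\|_\rho$ the zeros of $\det B$ are close to those of $\det A$ by Rouch\'e, so a Jensen/Riesz representation of $\ln|\det\cdot|$ on a slightly smaller strip yields the desired continuity.

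The bulk of the work is to prove continuity of $L'$ directly, working with $A$ rather than the $SL(2,\C)$ renormalization $\tilde A = A/|\det A|^{1/2}$ (which is unbounded at zeros of $\det A$, and whose use would obstruct a black-box appeal to Bourgain). The plan is to run Bourgain's multifrequency scheme on $L_N'$ itself. The three ingredients carry over: pluri-subharmonicity of $z\mapsto L_N'(A,\omega,z)$ on the strip, a BMO/large-deviation bound controlling the deviation of $L_N'(A,\omega,x)$ from $L_N'(A,\omega)$, and the avalanche principle linking $L_{kN}'$ to $L_N'$. Upper semi-continuity in $A$ is automatic from $L_N'\searrow L'$ and the fact that each $L_N'$ is continuous; the real content is matching lower bounds.

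The main obstacle is the large-deviation step in the $M(2,\C)$ setting. In Bourgain's $SL(2,\C)$ argument one uses that $\ln\|A_N(x)\|\geq 0$, so the subharmonic function has a two-sided $L^\infty$ bound on the strip and Bourgain--Goldstein-type deviation estimates apply directly. For $M(2,\C)$ the function can dive to $-\infty$ near the zero set of $\det A_N$, and what must be proved is that this lower tail is integrable with quantitative control. The key observation, analogous to the one used in \cite{JitomirskayaKosloverSchulteis} for $d=1$, is that the singularity of $\ln\|A_N\|$ is logarithmic with multiplicity controlled by the Jensen count of zeros of $\det A_N$ on the strip, which in turn grows only linearly in $N$ at a rate depending on $\|A\|_\rho$ and $\rho$. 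This gives $L_N'(A,\omega,\cdot)$ uniform bounds in every $L^p(\T^d)$, and the large-deviation and avalanche steps can then be reformulated in terms of $L^p$ (or $\mathrm{BMO}$ modulo a harmless logarithmic loss in $N$) rather than $L^\infty$. The hardest technical point will be verifying that this $L^p$ substitution does not degrade Bourgain's quantitative estimates beyond what can be absorbed at the end.

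For part (b), joint continuity in $(A,\omega)$ is obtained once the above machinery is in place. Upper semi-continuity in $(A,\omega)$ is again immediate from $L_N'\searrow L'$ since each $L_N'$ is jointly continuous on $C_\rho\times\T^d$. The no-resonance condition $k\cdot\omega\neq 0$ for all $k\in\Z^d\setminus\{0\}$ is used exactly as in Bourgain's argument: it guarantees unique ergodicity of $T_\omega$ and rules out that the Fourier support of the subharmonic average $L_N'(A,\omega,\cdot)$ accumulates on a rational hyperplane, which is the place where joint lower semi-continuity in $\omega$ would otherwise fail. Combining with part (a) yields the full statement.
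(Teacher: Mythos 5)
Your high-level strategy -- run Bourgain's scheme on $L_N'$ directly, using uniform $L^p$ control to substitute for the missing $L^\infty$ bound -- is indeed the philosophy of the paper (which uses a uniform $L^2$ bound from Duarte--Klein rather than the Jensen-count route you sketch). However, there are two genuine gaps that prevent the proposal from closing.

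First, you assert that ``each $L_N'$ is continuous'' and conclude upper semicontinuity of $L'$ automatically. For $M(2,\C)$ cocycles with vanishing determinant this finite-scale continuity is \emph{not} a free fact: $\ln\|A_N(x)\|$ is unbounded below, and a small perturbation of $A$ can radically alter the set where $\|A_N\|$ is near zero. The paper devotes an entire section to proving joint continuity of $L_N(A,\omega)$ for fixed $N$ and arbitrary $\omega$, via an excision/cutoff argument combined with a uniform Lojasiewicz inequality (Lemma~\ref{Lem:UnifLoj}) and the $L^2$ bound (Lemma~\ref{Lem:UniformL2}); the existing $d=1$ argument of Jitomirskaya--Koslover--Schulteis requires a Diophantine assumption and the Duarte--Klein argument requires ergodicity, so neither applies to arbitrary $\omega$. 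Treating this step as given is a real hole.

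Second, the Avalanche Principle step in Bourgain's scheme requires uniformly comparing $L_N(x)$ with $L_N(x+j\omega)$. In $SL(2,\C)$ this follows pointwise from invertibility, $|L_N(x)-L_N(x+\omega)|\le C/N$; for $M(2,\C)$ it simply fails at and near zeros of $\det A$. Your proposal does not address how to feed the AP hypotheses without this estimate. The paper replaces it with a measure-theoretic version (Lemma~\ref{Lem:CDTVar}), valid only off an exponentially small exceptional set, and this change propagates through the entire induction: the sets $F$ in Lemma~\ref{Lem:BaseEstAP} and Theorems~\ref{Thm:Liouv}, \ref{Thm:Mixed} must be carefully tracked, their complements estimated, and the contribution of $L_N$ on the complement absorbed via H\"older and the $L^2$ bound. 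Your claim that the $L^p$ substitution ``does not degrade Bourgain's quantitative estimates beyond what can be absorbed at the end'' is exactly the technical heart of the argument, and it cannot be asserted without this machinery. Finally, the characterization of the role of $k\cdot\omega\ne 0$ via ``unique ergodicity'' and ``Fourier support accumulating on a rational hyperplane'' does not reflect the actual mechanism, which is an induction on the number of Diophantine components of $\omega$ alternating Theorem~\ref{Thm:Mixed} with toral automorphisms from $SL(d,\Z)$; this step is where most of the frequency-theoretic subtlety lives and it is entirely missing from the proposal.
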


\begin{Remark}
Analyticity is necessary for continuity of $L(A,\omega),$ in general. It is well-known that the Lyapunov exponent is discontinuous in $C^0$-topology at all non-uniformly hyperbolic cocycles. There are also examples in $C^r, 1 \leq r \leq \infty.$ Wang-You \cite{WangYou} constructed $SL(2,\C)$ cocycles which are $C^\infty$ with $L(A,\omega) > 0,$ yet may be approximated in $C^\infty$ topology by cocycles with zero Lyapunov exponent. Similarly, Jitomirskaya-Marx \cite{JitomirskayaMarxContinuity} constructed examples of $M(2,\C)$ cocycles which are discontinuous in $C^\infty$-topology.
\end{Remark}

\begin{Remark}
Part (b) of the above theorem is optimal, in the sense that there are examples of cocycles $A_0$ for which $L(A_0,\omega)$ is a discontinuous function of $\omega$ at frequencies such that $\norm{k\cdot\omega} = 0$ for some $0 \ne k \in \Z^d.$ Indeed, consider any $0 \ne k = (k_1,...,k_d) \in \Z^d.$ Let $\lambda(x) = e^{2\pi i k\cdot x}e^{-2\pi(k_1+\cdots+k_d)},$ and define
\begin{equation}
A_0(x) = \begin{pmatrix} e^{\lambda(x)} & 0 \\ 0 & e^{-\lambda(x)} \end{pmatrix}.
\end{equation}
This generates an analytic quasiperiodic cocycle, $(A_0,\omega).$ We can easily verify the following:
\begin{enumerate}
	\item If $\norm{k\cdot \omega} = 0,$ then $L(A_0, \omega) = \left(\frac 2 \pi\right) e^{-2\pi(k_1 + \cdots + k_d)};$
	\item If $\norm{k\cdot \omega} \ne 0,$ then $L(A_0, \omega) = 0.$
\end{enumerate}
Thus $L(A_0,\omega)$ is continuous at $(A_0, \omega)$ for any $\omega$ such that $\norm{k\cdot \omega} \ne 0$ and is discontinuous at $(A_0,\omega)$ for all $\omega$ such that $\norm{k\cdot \omega} = 0.$
\end{Remark}

The first result on continuity of $L$ for analytic cocycles is a theorem of Goldstein and Schlag \cite{GoldsteinSchlagAnnals}, who proved continuity in $E$ (in fact, they proved H\"older continuity) for Schr\"odinger cocycles with $d = 1,$ under the assumption that the frequency satisfies a strong Diophantine condition. The first result where (a) and (b) were established was \cite{BourgainJitomirskayaContinuity}, where it was done for 1-frequency $SL(2,\C)$ cocycles. See the next remark for a brief summary of the relevant historical developments of (a) and (b).

\begin{Remark}
\begin{enumerate}
\item When $d = 1,$ (a) and (b) were proved in \cite{BourgainJitomirskayaContinuity} for $SL(2,\C)$ cocycles, and later extended in \cite{JitomirskayaKosloverSchulteis} for non-identically singular $M(2,\C)$ cocycles under a Diophantine frequency assumption. 
\item When $d \geq 1,$ (a) and (b) were proved by Bourgain for Schr\"odinger cocycles (though the argument clearly applies to $SL(2,\C)$ cocycles) \cite{BourgainContinuity}.

\item When $d = 1,$ (a) was proven by Avila, Jitomirskaya, and Sadel for all analytic $M(n,\C),$ with any $n,$ cocycles \cite{AvilaJitomirskayaSadel} by a different method (see also \cite{JitomirskayaMarxContinuity}).

\item When $d \geq 1,$ (a) was proven by Duarte and Klein for all analytic $M(n,\C),$ with any $n,$ cocycles, assuming a fixed Diophantine frequency \cite{DuarteKleinBook, DuarteKleinSingular}. 
\item As far as we know, Theorem \ref{Thm:MyMainThm} is the first result establishing joint continuity for non-$SL(2,\C)$ cocycles, and continuity in the cocycle for {\it all} frequencies.
\end{enumerate}
\end{Remark}

We now describe an application to the special case of the Jacobi cocycle. The multifrequency analytic quasiperiodic Jacobi operator is defined as
\begin{equation}\label{eq:JacobiEqn}
(h_{x,\omega}\psi)(n) = \overline{a(x + (n - 1)\omega)}\psi(n -1) + a(x + n\omega)\psi(n + 1) + v(x + n\omega)\psi(n),
\end{equation}
where $v \in C_\rho^\omega(\T^d, \R)$ and $a \in C_\rho^\omega(\T^d,\C).$ Solutions to the eigenequation $H\psi = E\psi$ may be recovered using the transfer matrix:
 $$\prod_{j = N}^1 \begin{pmatrix} E - v(x + j\omega) & -\overline{a(x + (j - 1)\omega)}\\ a( x + j\omega) & 0\end{pmatrix}.$$
 The transfer matrix may be realized as a cocycle by setting 
 $$A(x) = \begin{pmatrix} E - v(x + \omega) & -\overline{a( x - \omega)} \\ a(x) & 0 \end{pmatrix}.$$
 These cocycles are singular precisely when $a(x)$ has zeros; we assume $a(x)$ does not vanish identically.
 The regularity of the Lyapunov exponent for such cocycles when $d = 1$ is already well-understood \cite{JitomirskayaKosloverSchulteis}. Consider, moreover, the quasiperiodic operator with a periodic background. That is, consider:
 \begin{equation}\label{eq:JacobiPeriodic}
(\tilde{h}_{x,\omega}\psi)(n) = (h_{x,\omega}\psi)(n) + v_{per}(n)\psi(n),
 \end{equation}
where $v_{per}$ is a $q$-periodic sequence of real numbers. Solutions to the eigenequation for this operator may be recovered from the new transfer matrix
$$\prod_{j = N }^1 \begin{pmatrix} E - v(x + j\omega) - v_{per}(j) & -\overline{a(x + (j - 1)\omega)}\\ a( x + j\omega) & 0\end{pmatrix}.$$
This transfer matrix may be realized as a quasiperiodic cocycle by ``regrouping along the period'' and setting
$$A(x) = \prod_{j = q }^1 \begin{pmatrix} E - v(x + j\omega) - v_{per}(j) & -\overline{a(x + (j - 1)\omega)}\\ a( x + j\omega) & 0\end{pmatrix}.$$
We can now define the Lyapunov exponent of this cocycle, $L(E, v, a, v_{per}, \omega),$ as usual, and it will, in fact, agree with the Lyapunov exponent associated with the transfer matrix. An immediate corollary of Theorem \ref{Thm:MyMainThm} is the following.

\begin{mycor}
Consider the multifrequency quasiperiodic Schr\"odinger operator with periodic background given by \eqref{eq:JacobiPeriodic}.
Suppose $v \in C_\rho^\omega(\T^d, \R)$ and $a \in C_\rho^\omega(\T^d,\C)$ do not vanish identically, and suppose $v_{per}$ is a $q$-periodic sequence of real numbers. Then we have the following.
 \begin{enumerate}
 \item $L(E, v, a, v_{per}, \omega)$ is continuous in $E, v, a$ and $v_{per}$ for any $\omega\in \T^d.$ 
 \item $L(E, v, a, v_{per}, \omega)$ is jointly continuous in $E, v, a, v_{per}$ and $\omega$ for $\omega$ such that $k \cdot \omega \not= 0$ for any $k \in \Z^d\backslash\set{0}.$ 
 \end{enumerate}
 \end{mycor}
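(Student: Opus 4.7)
The plan is to derive the corollary directly from Theorem \ref{Thm:MyMainThm} by showing that the ``regrouped along the period'' matrix
\[
A(x) = A(x; E, v, a, v_{per}, \omega) = \prod_{j=q}^{1}\begin{pmatrix} E - v(x + j\omega) - v_{per}(j) & -\overline{a(x + (j-1)\omega)}\\ a(x + j\omega) & 0 \end{pmatrix}
\]
defines an analytic $M(2,\C)$ quasiperiodic cocycle on $\T^d$ with $\det(A)\not\equiv 0$, and that the assignment $(E, v, a, v_{per}, \omega)\mapsto A$ is continuous into $C_{\rho'}(\T^d,M(2,\C))$ for some $\rho' > 0$. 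Once these two points are established, parts (1) and (2) are immediate consequences of parts (a) and (b) of Theorem \ref{Thm:MyMainThm}, respectively, since the Lyapunov exponent of the regrouped cocycle agrees (up to the factor $1/q$ coming from iterates, which is an affine constant) with the Lyapunov exponent of the original transfer matrix.

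For the structural verification, I would first observe that each of the $q$ factor matrices is an analytic $M(2,\C)$-valued function of $x\in\T^d$ on the strip $|\Im z_j|<\rho$, since $v$, $a$, and $\overline{a}$ (meaning the function on $\T^d$ whose Fourier coefficients are those of $a$ conjugated) all admit such extensions, and the entry $v_{per}(j)$ is a constant in $x$ for each fixed $j$. A finite product of such matrices is again analytic on the same strip, so $A\in C_\rho(\T^d, M(2,\C))$. For the determinant, a direct computation gives
\[
\det A(x) = \prod_{j=1}^{q} a(x+j\omega)\,\overline{a}(x+(j-1)\omega),
\]
which, since $a\not\equiv 0$ on $\T^d$, is an analytic function on $\T^d$ that is not identically zero.

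For the continuity of the assignment, I would check that for fixed $\omega$, small perturbations of $E$, $v$, $a$, $v_{per}$ produce small perturbations of each factor matrix in the $\sup$-norm on the strip $|\Im z_j|<\rho$, whence the same holds for the product. Joint continuity in $\omega$ is a matter of noting that translation $a(\cdot)\mapsto a(\cdot+j\omega)$ is continuous from $\T^d$ into $C_{\rho'}$ for any $\rho'<\rho$ (by a standard Cauchy-estimate argument on the Fourier expansion, losing an arbitrarily small amount of analyticity), so $A$ depends continuously on $\omega$ in some slightly smaller strip. Composing with Theorem \ref{Thm:MyMainThm} applied on the strip of size $\rho'$ then gives both conclusions.

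The only place where care is needed—and the step I expect to be the subtlest—is ensuring that the perturbed cocycle still satisfies the non-identically-singular hypothesis of Theorem \ref{Thm:MyMainThm} throughout the perturbation, but this is automatic: continuity of $a\mapsto a$ in $C_\rho$ keeps $a$ away from the zero function on a neighborhood of the reference point, so $\det A$ remains not identically zero after sufficiently small perturbation, and Theorem \ref{Thm:MyMainThm} applies uniformly in the limit.
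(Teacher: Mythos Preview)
Your proposal is correct and follows the same approach as the paper, which simply declares the result ``an immediate corollary of Theorem~\ref{Thm:MyMainThm}'' without a separate proof. You have correctly identified the only points requiring verification---analyticity of the regrouped cocycle, non-vanishing of its determinant (via $a\not\equiv 0$), and continuity of the parameter-to-cocycle map---and your treatment of the $\omega$-dependence of $A$ via a slight shrinkage of the strip is the appropriate way to handle joint continuity.
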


Statements like Theorem \ref{Thm:MyMainThm}, for both $d = 1$ and $d > 1,$ have been studied extensively by many authors under suitable restrictions on the cocycle. Two particular methods have been used effectively in the past to establish continuity of $L(A,\omega)$ for quasiperiodic cocycles: one, used in \cite{AvilaJitomirskayaSadel} to prove continuity for arbitrary 1-frequency cocycles, is based on complexification of the cocycle and appealing to the notion of dominated splitting; recently, Duarte and Klein \cite{DuarteKleinObstructions} showed that there are large classes of multifrequency quasiperiodic cocycles which do not have dominated splitting, thus showing that this method cannot be used to address the multifrequency case; the second method (c.f. \cite{BourgainContinuity, BourgainJitomirskayaContinuity, DuarteKleinBook, JitomirskayaKosloverSchulteis}) is an induction scheme using the so-called Avalanche Principle and statistical properties of quasiperiodic cocycles. In this paper, we adapt the second method.


Bourgain and Jitomirskaya \cite{BourgainJitomirskayaContinuity} obtained joint continuity (as in (b)), a result that was essential for Avila's global theory, Ten Martini problem, and other important developments. It was observed by Jitomirskaya, Koslover, and Schulteis that this argument extends to the case of non-identically singular analytic cocycles which posses some analytic extension to a complex strip. The argument of these results relies on two ideas: first, a statistical property known as a large deviation theorem (LDT); and second, a general property of $SL(2,\C)$ matrices with large norm, known as the Avalanche Principle (AP). The argument of \cite{BourgainJitomirskayaContinuity} for $d = 1$ was based on the same basic ingredients as \cite{GoldsteinSchlagAnnals}: large deviation estimates LDT (i.e. statistical properties of the cocycle) and Avalanche Principle; however, \cite{BourgainJitomirskayaContinuity} constructed a special inductive scheme to deal with arbitrary frequencies and joint continuity.

A large deviation estimate is an estimate of the form:
$$\left|\set{x \in X: |f(x) - \int_X f(x)d\mu(x)| > \eta}\right| < \epsilon(\eta)$$
where, ideally, $\epsilon$ is exponentially small in $\eta.$
Such estimates were first used by Bourgain and Goldstein \cite{BourgainGoldstein} to establish Anderson localization for one-frequency quasiperiodic Schr\"odinger operators, and they have since been extended \cite{WencaiDisc} and play an important role in the study of various properties of quasiperiodic Schr\"odinger operators. For example, they have been used recently to obtain estimates on quantum dynamics (c.f. \cite{JitomirskayaPowell, LanaWencaiDynamics, ShamisSodin} etc.). 

The Avalanche principle was first introduced by Goldstein and Schlag \cite{GoldsteinSchlagAnnals} in their work on H\"older regularity of the integrated density of states, and variations of the original statement have been used in proofs of the continuity of $L(A,\omega)$ in various settings (c.f. \cite{AvilaJitomirskayaSadel, BourgainJitomirskayaContinuity, BourgainContinuity, JitomirskayaKosloverSchulteis}). 

The argument for $SL(2,\C)$ cocycles with $d = 1$ developed in \cite{BourgainJitomirskayaContinuity} roughly proceeds as follows. First, the frequency is assumed to be irrational, since rational frequencies are well-understood. Analyticity of the cocycle implies that $L_N(A,\omega,x)$ is subharmonic in $x$ with a {\it bounded} subharmonic extension to the strip $|\Im z| < \rho,$ for some $\rho > 0.$ An analysis of bounded subharmonic functions on the strip leads to estimates on the decay of the Fourier coefficients of $L_N,$ and this, in turn, leads to an LDT of the form
$$\left|\set{x \in \T: |L_N(A,\omega,x) - L_N(A,\omega)| > \kappa}\right| < e^{-c\kappa q},$$
where $\kappa$ and $q$ relate to properties of the frequency, $\omega.$ Combining this with the Avalanche Principle results in an estimate of the form
$$|L_{N_0}(A,\omega) - L_{N_1}(A,\omega)| < \kappa,$$ where $N_0$ is an initial scale which depends on measurements of the frequency, $\kappa$ is an error which depends on measurements of the frequency and $N_0,$ and $N_1$ is a multiple of $N_0$ which is not too large. This estimate is then used successively in an induction scheme to relate $L_{N_0}(A,\omega)$ to $L(A,\omega),$ and continuity of $L$ follows from continuity of $L_{N_0}.$

When $d > 1,$ serious technical issues arise which makes the arguments more complex. The only general result for arbitrary frequencies is Bourgain \cite{BourgainContinuity}, where an exact analogue of Theorem \ref{Thm:MyMainThm} was established for Schr\"odinger cocycles (though the argument extends without issue to $SL(2,\C)$ cocycles). One of the goals of this paper is to illustrate the power of the argument in \cite{BourgainContinuity} by extending it to a more difficult general $M(2,\C)$ case, while also providing additional details and clarifications to the original argument.

The rest of the paper is organized in the following way. In section \ref{Section:Outline} we briefly describe our argument. In Section \ref{Section:SHEstimates} we  recall the relevant facts about subharmonic and plurisubharmonic functions, and use them to prove two essential measure estimates, Lemma \ref{Lem:CDT} and Theorem \ref{Thm:LDT}. In Section \ref{Section:FiniteScaleCont}, we prove joint continuity of $L_N(A,\omega)$ for fixed $N$ and arbitrary $\omega.$ In Section \ref{Section:AP} we recall the Avalanche Principle and prove Theorem \ref{Thm:AvalanchePrincipleConsequence}, which we use throughout our induction scheme. In Section \ref{Section:Liouv}, we establish estimates between $L_N(A,\omega)$ at different scales when $\omega$ satisfies a Liouville-type condition. In Section \ref{Section:Mixed}, we establish estimates between $L_N(A,\omega)$ at different scales when $\omega$ satisfies a mixed Liouville-Diophantine condition. In Section \ref{Section:ContLemma} we use induction to extend the conclusions of Sections \ref{Section:Liouv} and \ref{Section:Mixed} to larger length scales. Finally, in Section \ref{Section:ContArg}, we use our induction result and finite-scale continuity to prove Theorem \ref{Thm:MyMainThm}. We also provide an appendix, where we provide proofs of the relevant plurisubharmonic function estimates from Section \ref{Section:SHEstimates}.

\section{A brief description of our argument}\label{Section:Outline}

In the $SL(2,\C)$ case, the major differences between $d = 1$ and $d > 1$ are largely a result of the interactions between the different components of the frequency. In particular, $L_N(A,\omega,x)$ is a {\it bounded} plurisubharmonic function (i.e. a multivariable function which is subharmonic in each variable) which does not behave as well as a subharmonic function. This makes the analysis necessary to obtain an LDT more technical. Moreover, there is no longer a dichotomy between rational and irrational frequencies, but rather a trichotomy between frequencies whose components are purely Diophantine, purely Liouville (or rationally dependent), and those with some components which are Diophantine and some which are Liouville (or rationally dependent). This complicates the argument in two ways. First, an LDT can only be obtained for purely Diophantine $\omega,$ so an additional argument is needed to obtain some (weaker) measure-theoretic estimate which is applicable when the frequency is not purely Diophantine. Second, the inductive procedure necessary to relate $L_{N_0}$ to $L$ is different depending on what kind of $\omega$ we have.

Let us now take some time to briefly describe our argument. We follow the same general structure as in \cite{BourgainContinuity}, and our argument can be viewed as an extension of Bourgain's. 
That is, the main scheme of our proof is adapted from \cite{BourgainContinuity}. However, while our result is significantly more general and more technically complex, our argument can be viewed as a clarification of Bourgain's main ideas, and, hopefully, improves the readability of the argument. In particular, we provide missing details in the arguments in Section \ref{Section:Liouv} and Section \ref{Section:ContLemma} and summarize the main ideas throughout.
The original argument, however, is not directly applicable in our general setting due to a few technical issues that arise while considering general cocycles. In particular, uniform (in $N$) pointwise boundedness and non-negativity of $L_N(x),$ as well as quantitative estimates on $|L_N(x) - L_N(x + \omega)|,$ are used extensively in Bourgain's work, while they no longer hold if $\det(A(x))$ is allowed to vanish, as, say, in the case of Jacobi cocycles; these need to be dealt with uniformly on all steps. 

Here, we give a brief description of Bourgain's scheme and the key difficulties in its adaptation.

The first step is to establish a large deviation estimate under suitable assumptions made on $\omega.$ As we noted above, this is typically arrived at by observing that $L_N(A,\omega,x)$ is plurisubharmonic with a {\it bounded} extension to a strip. Since we consider cocycles which may have singularities, $L_N(A,\omega,x)$ need not be bounded. Fortunately, following ideas introduced in \cite{DuarteKleinBook}, while we cannot say $L_N(A,\omega,x)$ is uniformly pointwise bounded, we can say it is uniformly $L^2$ bounded. It turns out that this is sufficient to perform the necessary analysis to obtain an LDT for $\omega$ which posses Diophantine-like properties up to suitably large scales (see Theorem \ref{Thm:LDT}). 

The uniform large deviation estimate we establish here (see Theorem \ref{Thm:LDT} is different from the uniform large deviation estimate established in \cite{DuarteKleinBook} (c.f. \cite{DuarteKleinBook} Theorem 6.6) in one crucial aspect: the result of Duarte and Klein requires an explicit Diophantine condition on the frequency, whereas our result requires a restricted Diophantine condition. In particular, every $\omega = (\omega_1,...,\omega_d) \in \T^d,$ where $\omega_1,...,\omega_d$ are irrational and rationally independent, satisfies a restricted Diophantine condition but need not satisfy a Diophantine condition. Due to the generality of the frequencies we consider, we lose any control over the modulus of continuity. It is this difference, however, which allows us to establish continuity which does not require a Diophantine assumption.

Our second step is to establish quantitative estimates on $|L_N(A, \omega, x) - L_N(A,\omega, x + a)|,$ which we use when our frequency is such that our LDT is not applicable. Our analysis of the plurisubharmonic function $L_N(A,\omega,x)$ allows us to say that, for any $a \in \T^d,$ $L_N(A,\omega,x)$ and $L_N(A,\omega, x + a)$ are close, away from a set of small measure (see Lemma \ref{Lem:CDT}). 

Next, we establish quantitative estimates on $|L_N(A, \omega, x) - L_N(A,\omega, x + \omega)|,$ which we use throughout to relate $L_{N_0}(A,\omega,x)$ and $L_{N_1}(A,\omega,x).$ In the Schr\"odinger case (and, in fact, in the case of nowhere singular cocycles) this is a simple consequence of the everywhere invertibility of $A;$ we consider cocycles which may be non-invertible somewhere. Once again, we are able to use our uniform $L^2$ boundedness, along with a uniform version of the Lojasiewicz inequality (see Lemma \ref{Lem:UnifLoj}), to prove that this difference is small away from an exponentially small set (see Lemma \ref{Lem:CDTVar}).

Next, we turn our attention to something which, at first glance, might seem trivial. In the $SL(2,\C)$ case, the entire argument relies on the well-understood fact that $L_N(A,\omega)$ is jointly continuous for any $(A,\omega)$ when $N$ is fixed. The typical argument for this relies on boundedness of $L_N(A,\omega).$ Since we no longer have boundedness, it is not immediately obvious why continuity should still hold. In the not-identically singular 1-frequency case \cite{JitomirskayaKosloverSchulteis}, joint continuity was proved using a cutoff argument and Diophantine considerations. In the multifrequency case with Diophantine frequency \cite{DuarteKleinBook}, continuity was proved using ergodicity considerations which required restrictive assumptions on the frequency. Neither method is wholly applicable in our setting, as we want a result for all frequencies. Using a uniform Lojasiewicz inequality, we are able to adapt the cutoff argument of Jitomirskaya, Koslover, and Schulteis and extend it to arbitrary frequencies.

Our next step is to establish our base estimate relating $L_{N_0}(A,\omega)$ to $L_{N_1}(A,\omega),$ for $N_1$ not too large, when $\omega$ is not Diophantine (see Theorem \ref{Thm:Liouv}. We prove this as a consequence of the Avalanche Principle and Lemma \ref{Lem:CDT}. This argument is of critical importance, as it provides the framework for estimates whenever the frequency is not purely Diophantine, and we appeal to it again when we prove Theorem \ref{Thm:Mixed}. 

We then turn our attention to relating $L_{N_0}(A,\omega)$ to $L_{N_1}(A,\omega),$ for $N_1$ not too large, when some components of $\omega$ are Diophantine and other components are not (see Theorem \ref{Thm:Mixed}). The case when the frequency is purely Diophantine is a special case of this. Our argument here relies on applying Lemma \ref{Lem:CDT} in the variables corresponding to the non-Diophantine components of $\omega$ and applying Theorem \ref{Thm:LDT} in those components which are Diophantine. This, eventually, leads us to a situation where the proof of Theorem \ref{Lem:Liouv} is applicable.

\begin{Remark}
Our estimates in Theorems \ref{Thm:Liouv} and \ref{Thm:Mixed} differ from the corresponding estimates in the $SL(2,\C)$ case (c.f. \cite{BourgainContinuity} Corollary 3.12 and Lemma 3.26) by a small power of $\kappa,$ which is a consequence of using a uniform $L^2$ estimate for $L_N(A,x),$ rather than a uniform pointwise bound. 
\end{Remark}

Our final step is an inductive argument allowing us to iterate our initial estimates to larger scales (see Theorem \ref{Thm:MainStep}). This relies on a delicate argument where we alternate between applying a toral automorphism (i.e. a change of variables which does not change the value of $L_N(A,\omega)$) and applying Theorem \ref{Thm:Mixed}.

The continuity of $L$ then follows from Theorem \ref{Thm:MainStep} and continuity of $L_N(A,\omega).$

\section{Plurisubharmonic functions and related estimates}\label{Section:SHEstimates}


In this section, we present the relevant facts and estimates related to plurisubharmonic functions defined on complex strips in $\C^d.$ The results here are based on results found in Chapter 6 of \cite{DuarteKleinBook} and we apply them to recover results from Section 1 of \cite{BourgainContinuity}. We present the statements of the main results here, mostly without proof. We provide proofs of Lemma \ref{Lem:LDTver1}, Theorem \ref{Thm:LDT}, and Lemma \ref{Lem:CDT}, as we will make repeated use of these results in later sections. A detailed discussion and proofs of the remaining results is provided in \ref{Appendix}.

One of the major obstacles to extending results about Lyapunov exponents for Schr\"odinger cocycles to general $M(2,\C)$ cocycles is the lack of uniform pointwise boundedness in the latter case. It turns out, however, that a uniform $L^p$ estimate is sufficient for our argument. The following lemma establishes such a uniform estimate.

\begin{mylemma}[\cite{DuarteKleinBook} Proposition 6.3] \label{Lem:UniformL2} Let $A \in C_\rho^\omega(\T^d, M(2,\C))$ with $\det(A)$ not identically 0. Then there are $\delta = \delta(A) > 0$ and $C = C(A) < \infty$ such that for any $B \in C_\rho^\omega(\T^d, M(2,\C)),$ with $\norm{B - A}_\rho < \delta,$ then 
\begin{equation}
\norm{L_n{(B)}}_{L^2(\T^d)} \leq C
\end{equation}
and
\begin{equation}
\norm{\ln|\det(B(x))|}_{L^2(\T^d)} \leq C.
\end{equation}
\end{mylemma}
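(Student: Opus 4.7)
The plan is to treat the two claimed bounds in sequence: first establish the uniform $L^2$ bound on $\ln|\det B|$, and then reduce the bound on $L_n(B)$ to it. The reduction uses the elementary $2\times 2$ matrix inequality $\norm{M} \geq |\det M|^{1/2}$ (since $\norm{M} = \sigma_1 \geq \sqrt{\sigma_1 \sigma_2} = |\det M|^{1/2}$). Iterated along the orbit, this gives
\begin{equation}
n L_n(B, x) = \ln\norm{B_n(x)} \geq \tfrac 12 \ln|\det B_n(x)| = \tfrac 12 \sum_{j=0}^{n-1} \ln|\det B(x + j\omega)|.
\end{equation}
Combined with the trivial pointwise upper bound $L_n(B, x) \leq \ln(\norm{A}_\rho + \delta) =: C_1$ (from submultiplicativity and $\norm{B}_\rho \leq \norm{A}_\rho + \delta$), the negative part of $L_n$ is dominated pointwise by $\frac{1}{2n} \sum_j (\ln|\det B(x + j\omega)|)^-$. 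Minkowski's inequality together with the shift invariance of Lebesgue measure on $\T^d$ then yields
\begin{equation}
\norm{L_n(B)}_{L^2(\T^d)} \leq C_1 + \tfrac 12 \norm{\ln|\det B|}_{L^2(\T^d)},
\end{equation}
so both claimed bounds reduce to a single uniform estimate on $g_B := \det B$.

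For this estimate, $g_B$ is holomorphic on the strip $\set{|\Im z_j| < \rho}$, converges in sup norm to $g_A := \det A$ as $B \to A$, and satisfies $\sup|g_B| \leq M$ uniformly in $B$ for $\delta$ small enough. The function $\ln|g_B|$ is plurisubharmonic, and the $L^2$ control I want follows from a general plurisubharmonic estimate of the form: if $g$ is holomorphic on the strip with $\sup|g| \leq M$ and $\int_{\T^d} \ln|g| \geq -K$, then $\norm{\ln|g|}_{L^2(\T^d)} \leq F(M, K, \rho, d)$. The standard proof of this goes through the Riesz representation, writing $\ln|g|$ as the sum of a pluriharmonic part (controlled by $\sup|g|$) and a logarithmic potential of the Riesz measure (encoding the zeros of $g$); the total mass of the Riesz measure is controlled by Jensen-type identities in terms of $M$ and $K$, and logarithmic potentials of finite measures on $\T^d$ lie in every $L^p(\T^d)$ with explicit norm bounds.

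The main obstacle is obtaining the lower bound $\int_{\T^d} \ln|g_B| \geq -K$ uniformly for $B$ near $A$. A single application of the plurisubharmonic estimate to $g_A$ (which is not identically zero by assumption) shows $\ln|g_A| \in L^1(\T^d)$ with $\int \ln|g_A| \geq -K_0$ for some finite $K_0$. The uniform version then rests on upper semicontinuity of $g \mapsto \int_{\T^d} \ln|g|$ under sup-norm convergence with a nontrivial limit, which is essentially Hurwitz's theorem combined with the weak continuity of Riesz measures: for $\delta$ small enough, $\int \ln|g_B| \geq -2K_0$. Controlling how zeros of $\det B$ can accumulate as $B$ varies is the delicate step, and it is precisely where the plurisubharmonic machinery of Chapter 6 of \cite{DuarteKleinBook} enters in an essential way.
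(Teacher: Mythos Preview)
The paper does not prove this lemma itself; it is quoted from \cite{DuarteKleinBook}, and the only hint the paper gives about the argument is the sentence immediately following it: ``Both of these estimates relies on a uniform version of the Lojasiewicz inequality'' (Lemma~\ref{Lem:UnifLoj}). Your reduction of the $L_n(B)$ bound to the $\ln|\det B|$ bound via $\norm{M}\geq|\det M|^{1/2}$, the trivial upper bound, and Minkowski plus shift-invariance is correct and is exactly what appears in the paper's Appendix (Lemma~\ref{Lem:CoVL2}).

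Where you diverge is in the treatment of $\norm{\ln|\det B|}_{L^2}$. The route through the Riesz representation, Hurwitz, and weak convergence of Riesz measures is workable, but it is heavier than what the paper (via Duarte--Klein) uses. The uniform {\L}ojasiewicz inequality $|\{x:|g_B(x)|<t\}|<St^b$, with $S,b$ stable for $B$ near $A$, gives the $L^2$ bound in one line by the layer-cake formula: since $|g_B|\leq M$ uniformly, $|\ln|g_B||>s$ forces $|g_B|<e^{-s}$ for $s>\ln M$, and $\int_0^\infty 2s\cdot Se^{-bs}\,ds<\infty$. This bypasses the Riesz decomposition entirely, handles the several-variable case without a separate reduction, and delivers the uniformity in $B$ directly rather than through a limiting argument.

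One small slip: you write ``upper semicontinuity of $g\mapsto\int\ln|g|$'' when what you actually need is a \emph{lower} bound on $\int\ln|g_B|$. Upper semicontinuity (which is what Fatou gives for free here) goes the wrong way. Your parenthetical ``Hurwitz combined with weak continuity of Riesz measures'' is the correct mechanism and does yield full continuity, not just one-sided semicontinuity, so the argument survives; but the wording should be fixed.
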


The main application of this lemma will be in excising certain small sets of ``bad" points (where the pointwise bound is large) and showing that the integral of $L_N(x)$ over these bad sets is small by H\"older's inequality.

Another major obstacle is relating $L_N(x)$ to $L_N(x + \omega).$ In the $SL(2,\C)$ case, the well-known estimate
$$|L_N(x) - L_N(x + \omega)| < C \frac{1}{N}$$
holds. Such an estimate does not hold, in general, for non-invertible cocycles. However, it is possible to show that such an estimate holds for a large set of $x.$

\begin{mylemma}[\cite{DuarteKleinBook} Proposition 6.4] \label{Lem:CDTVar} Let $A \in C_\rho^\omega(\T^d, M(2,\C))$ with $\det(A)$ not identically 0. Then there are $\delta = \delta(A) > 0$ and $C = C(A) < \infty$ such that for any $0 < a < 1,$ if $B \in C_\rho^\omega(\T^d, M(2,\C))$ with $\norm{B - A}_\rho < \delta,$ then 
\begin{equation}
|L_N(B, x) - L_N(B, x + \omega)| \leq C N^{-a}
\end{equation}
holds for all $N \geq 1$ and for all $x \not\in F_N,$ where $|F_N| < e^{-N^{1 - a}}.$
\end{mylemma}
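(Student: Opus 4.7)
\medskip

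\noindent\textbf{Proof proposal for Lemma \ref{Lem:CDTVar}.}

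The plan is to exploit the basic cocycle identity together with the $2{\times}2$ inverse formula to reduce the problem to controlling the set where $|\det B(x)|$ is small, and then apply a uniform Lojasiewicz inequality. Concretely, from the product definition one has the telescoping identity
\begin{equation*}
B_N(x+\omega)\,B(x) \;=\; B(x+N\omega)\,B_N(x).
\end{equation*}
Taking operator norms and using $\|M\|\cdot\|M^{-1}\|\geq 1$ on both sides, together with the fact that for any $M\in M(2,\C)$ with $\det M\neq 0$ one has $\|M^{-1}\|\leq C\,\|M\|\,|\det M|^{-1}$ (since the adjugate of a $2{\times}2$ matrix is comparable to the matrix itself), one obtains after taking $\frac{1}{N}\log$
\begin{equation*}
\bigl|L_N(B,x)-L_N(B,x+\omega)\bigr| \;\leq\; \frac{C}{N} \;-\; \frac{1}{N}\Bigl[\ln|\det B(x)| + \ln|\det B(x+N\omega)|\Bigr],
\end{equation*}
where the $C/N$ absorbs the contribution of $\ln\|B(x)\|$ and $\ln\|B(x+N\omega)\|$, uniformly bounded by analyticity and $\|B-A\|_\rho<\delta$.

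Given this pointwise inequality, the estimate $|L_N(B,x)-L_N(B,x+\omega)|\leq C N^{-a}$ will hold as soon as $|\det B(x)|\geq e^{-c N^{1-a}}$ and $|\det B(x+N\omega)|\geq e^{-cN^{1-a}}$. So the bad set is
\begin{equation*}
F_N \;=\; \bigl\{x: |\det B(x)|< e^{-c N^{1-a}}\bigr\} \;\cup\; \bigl\{x: |\det B(x+N\omega)|< e^{-c N^{1-a}}\bigr\}.
\end{equation*}
The second set is a translate of the first, so has the same measure, and it suffices to bound $|\{x:|\det B(x)|<t\}|$. This is exactly where the uniform Lojasiewicz inequality (Lemma \ref{Lem:UnifLoj} mentioned in the outline) enters: since $\det A\not\equiv 0$ is analytic on $\T^d$, there exist $\alpha>0$ and $\delta>0$, depending only on $A$, such that for all $B$ with $\|B-A\|_\rho<\delta$,
\begin{equation*}
\bigl|\{x\in\T^d:\;|\det B(x)|<t\}\bigr| \;\leq\; C\,t^\alpha.
\end{equation*}
Plugging $t=e^{-cN^{1-a}}$ gives $|F_N|\leq 2C\,e^{-c\alpha N^{1-a}}$, and choosing the constant $c$ in the threshold large enough that $c\alpha>1$ yields $|F_N|<e^{-N^{1-a}}$, as required.

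The routine calculations are the first two displays; the substantive ingredient is the uniform Lojasiewicz inequality, and this is where I expect the main obstacle to lie. The classical Lojasiewicz inequality provides, for each fixed real-analytic $f$ on $\T^d$ that is not identically zero, constants $C_f$ and $\alpha_f$ such that $|\{|f|<t\}|\leq C_f t^{\alpha_f}$; the non-trivial part is extracting $C,\alpha$ that are uniform over a $C_\rho$-neighborhood of $\det A$. Since the lemma already assumes existence of such a $\delta$ and since this uniform version is cited from \cite{DuarteKleinBook} and will be proved in the appendix, the rest of the argument is the short deterministic reduction sketched above.
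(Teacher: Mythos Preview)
Your proposal is correct and is essentially the standard argument. The paper does not give its own proof of this lemma; it is stated with a citation to \cite{DuarteKleinBook} Proposition~6.4, and the appendix only supplies proofs of the Fourier-decay and BMO-type lemmas, not of Lemma~\ref{Lem:CDTVar}. Your reduction via the cocycle identity $B_N(x+\omega)B(x)=B(x+N\omega)B_N(x)$, the $2{\times}2$ relation $\|M^{-1}\|=\|M\|/|\det M|$, and then the uniform \L ojasiewicz inequality (Lemma~\ref{Lem:UnifLoj}) to control $|\{|\det B|<e^{-cN^{1-a}}\}|$ is exactly how this is done in the cited reference, so there is nothing to correct.
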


Both of these estimates relies on a uniform version of the Lojasiewicz inequality, which is of independent interest to us.

\begin{mylemma}[\cite{DuarteKleinBook} Lemma 6.1]\label{Lem:UnifLoj} Let $f(x) \in C_\rho^\omega(\T^d,\C)$ be such that $f(x)$ is not identically zero. Then ther are constants $\delta = \delta(f) > 0, S = S(f) < \infty,$ and $b = b(f) > 0$ such that if $g(x) \in C_\rho^\omega(\T^d,\C)$ with $\norm{g - f}_\rho < \delta,$ then 
\begin{equation}
\left|\set{x \in \T^d: |g(x)| < t}\right| < S t^b
\end{equation}
for all $t > 0.$
\end{mylemma}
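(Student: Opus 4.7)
The plan is to pass to the plurisubharmonic function $u_g(z) := \ln|g(z)|$ and derive the sublevel-set estimate from uniform upper and lower integral bounds on $u_g$, combined with an exponential concentration inequality for plurisubharmonic (psh) functions on the strip.

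First, since $f \not\equiv 0$, I pick an interior point $z_0 \in \set{z : |\Im z_j| < \rho}$ with $|f(z_0)| \geq c_0 > 0$. Choosing $\delta := c_0/2$, any $g$ with $\norm{g-f}_\rho < \delta$ satisfies both $\norm{g}_\rho \leq M := \norm{f}_\rho + \delta$ and $|g(z_0)| \geq c_0/2$. Consequently $u_g$ is plurisubharmonic on the strip, bounded above by $\ln M$, and satisfies $u_g(z_0) \geq \ln(c_0/2)$, all uniformly in $g$ in this $\delta$-neighborhood of $f$.

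Second---and this is the heart of the argument---I would upgrade the pointwise lower bound $u_g(z_0) \geq \ln(c_0/2)$ to a uniform $L^1$ lower bound
\begin{equation*}
\int_{\T^d} u_g(x)\, dx \geq -K
\end{equation*}
for some $K = K(f) < \infty$. This does not follow from the pointwise bound alone, since a general psh function can be bounded at a point while its mean is driven arbitrarily negative by nearby log-singularities. The key is to use that $u_g = \ln|g|$ is not an arbitrary psh function but specifically the log-modulus of an analytic function. Via a Jensen/Poisson-type formula on the strip applied to the auxiliary function $I(y) := \int_{\T^d} u_g(x + iy)\, dx$---which is itself psh in $y$ on $\set{|y_j| < \rho}$ and bounded above by $\ln M$---combined with Rouch\'e's theorem to control the count of zeros of $g$ in a compact subdomain of the strip as $g$ varies near $f$, we obtain $I(0) \geq -K$ uniformly.

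Third, with $u_g \leq \ln M$ pointwise and $\int u_g \geq -K$ in mean, a John-Nirenberg type exponential inequality (for log-moduli of analytic functions, or equivalently for bounded-above psh functions with controlled mean) produces constants $S_0, b > 0$ depending only on $M$, $K$, and $\rho$ such that
\begin{equation*}
\left| \set{x \in \T^d : u_g(x) < -s} \right| \leq S_0\, e^{-bs}, \qquad s > 0.
\end{equation*}
Setting $s = -\ln t$ converts this into $|\set{x : |g(x)| < t}| \leq S_0\, t^b$ for $t \in (0,1]$, and the estimate is trivially extended to all $t > 0$ by enlarging $S_0$ to some $S \geq 1$.

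The main obstacle I expect is step two: upgrading a single pointwise lower bound into a uniform $L^1$ lower bound on $\T^d$. This is where the analytic structure---not merely plurisubharmonicity---of $u_g = \ln|g|$ is essential, and where the stability of $g$'s zero count under perturbation enters via Rouch\'e. An alternative route that sidesteps this step is a uniform Weierstrass preparation argument, factoring $g$ near each zero of $f$ as a non-vanishing factor times a pseudopolynomial of degree bounded by the order of vanishing of $f$, and then applying the classical Remez/\L ojasiewicz inequality for polynomials with bounded coefficients; that approach has comparable technical difficulty in quantifying the stability of the decomposition.
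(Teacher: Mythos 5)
The paper does not prove this lemma: it is imported wholesale from Duarte and Klein's monograph (\cite{DuarteKleinBook}, Lemma 6.1), and no proof appears either in the body or in Appendix \ref{Appendix} (the appendix proves the Fourier-decay and BMO lemmas, not this one). So there is no paper proof to compare your proposal against.

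As a standalone sketch, the overall shape---a uniform upper bound on $\ln|g|$, a uniform lower bound on $\int_{\T^d}\ln|g|$, and then a Cartan/John--Nirenberg-type concentration inequality to pass to the sublevel-set estimate---is sound, and you correctly identify the mean lower bound as the crux. But your step two, as written, would not go through for $d>1$. The assertion that $I(y)=\int_{\T^d}u_g(x+iy)\,dx$ ``is itself psh in $y$'' is not meaningful, since $y$ ranges over a real cube; plurisubharmonicity of $u_g$ gives at best convexity of $I$ in each $y_j$, which is a weaker tool than the Jensen formula you want to invoke. More seriously, Rouch\'e's theorem counts zeros of a one-variable analytic function; for $d>1$ the zero locus of $g$ is a positive-dimensional analytic variety and there is no ``count of zeros in a compact subdomain'' to stabilize. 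The correct multivariable argument slices in one coordinate at a time, applies a one-variable Jensen/Cartan estimate to the slices, and integrates over the remaining variables; the required uniformity in $g$ then rests on slice-by-slice stability of vanishing orders, which is exactly the uniform-Weierstrass-preparation route you dismiss at the end as merely ``of comparable technical difficulty.'' That route (equivalently, the uniform $L^2$-boundedness of $\ln|\det|$ that the paper records as Lemma \ref{Lem:UniformL2}) is the load-bearing ingredient, not an optional alternative.
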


With these lemmas in hand, we may proceed with our analysis of $L_N(A,x).$ Our next goal is to obtain finer control over $L_N(x),$ with the eventual hope of obtaining a large deviation estimate. Large deviation estimates for quasiperiodic cocycles typically arise from suitable decay of the associated Fourier coefficients, so we begin by controling the behavior of the Fourier coefficients.

\begin{Remark} The following three lemmas may be recovered via a synthesis of the statements and proofs in Chapter 6 from \cite{DuarteKleinBook}.
For convenience, we provide proofs of these three results in Appendix \ref{Appendix}.
\end{Remark}

\begin{mylemma}\label{Lem:FourierDecay}
Let $A \in C_\rho^\omega(\T^d, M(2,\C))$ with $\det(A)$ not identically 0. Then there are $\delta = \delta(A) > 0$ and $C = C(A, \rho) < \infty$ such that for any $B \in C_\rho^\omega(\T^d, M(2,\C))$ with $\norm{B - A}_\rho < \delta,$ 
\begin{equation}
\sum_{k\in \Z^d, |k| > K_0} |\hat{L}_n(B,k)|^2 \leq C \frac{1}{K_0}.
\end{equation}
\end{mylemma}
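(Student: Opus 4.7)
The plan is to exploit plurisubharmonicity of $L_n(B,z)$ by slicing in each coordinate direction, applying a one-variable Fourier decay estimate to each slice, and gluing the slices together with Parseval.

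First, I would fix $\delta>0$ small enough that Lemma \ref{Lem:UniformL2} applies and $\norm{B}_\rho\le 2\norm{A}_\rho =: M_0$ whenever $\norm{B-A}_\rho<\delta$. Two uniform controls then hold on every slice $z_j \mapsto L_n(B; z_j,x')$, with $x'\in\T^{d-1}$ fixed: the strip bound $L_n(B,z)\le \ln M_0 =: M$ (from $\norm{B_n(z)}\le M_0^n$), and, after integrating in $x'$, the control $\int_{\T^{d-1}} \norm{L_n(B;\cdot,x')}_{L^2(\T)}^2\,dx' = \norm{L_n(B)}_{L^2(\T^d)}^2 \le C_0^2$ supplied by Lemma \ref{Lem:UniformL2}.

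Next, to each slice I would apply the classical one-variable Fourier coefficient estimate for subharmonic functions on a strip, established via the Riesz representation (Chapter 6 of \cite{DuarteKleinBook}). The output is
\begin{equation*}
|\hat L_n(B;k_j,x')|^2 \;\le\; \frac{C_\rho\bigl(M^2 + \norm{L_n(B;\cdot,x')}_{L^2(\T)}^2\bigr)}{k_j^2}\qquad (k_j\ne 0),
\end{equation*}
where $\hat L_n(B;k_j,x')$ denotes the partial Fourier transform in $x_j$. Integrating in $x'$ and then applying Parseval in the remaining $d-1$ variables gives $\sum_{k'\in\Z^{d-1}} |\hat L_n(B;k_j,k')|^2 \le C/k_j^2$ for each $k_j \ne 0$, uniformly over the $\delta$-neighborhood of $A$. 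Summing over $|k_j|>K_0/\sqrt d$ yields $\sum_{|k_j|>K_0/\sqrt d}|\hat L_n(B;k)|^2 \le C'/K_0$ for each coordinate $j$. Finally, the pigeonhole inclusion $\{k\in\Z^d:|k|>K_0\}\subseteq \bigcup_{j=1}^d \{k:|k_j|>K_0/\sqrt d\}$ combined with a union bound yields the desired estimate $C/K_0$.

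The main obstacle is the per-slice Fourier estimate. The classical version is usually stated for subharmonic functions bounded on both sides, but here the possible vanishing of $\det B$ can drive $L_n$ to $-\infty$, so only an upper bound together with $L^2$ control on the real axis is available. Pushing the Riesz representation argument through with this weaker input---verifying in particular that the Riesz mass of each slice is controlled uniformly by $M+\norm{L_n(B;\cdot,x')}_{L^2(\T)}$---is the technical step, and it is precisely where the uniform $L^2$ estimate in Lemma \ref{Lem:UniformL2} does the heavy lifting.
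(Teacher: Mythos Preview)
Your proposal is correct and follows the same architecture as the paper's proof in Appendix~\ref{Appendix}: slice in each coordinate, apply the one-variable Fourier decay for subharmonic functions on a strip, pass to the full Fourier series via Parseval in the remaining variables, and finish with a union bound over the $d$ coordinate directions.

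There is one genuine technical difference worth noting. The paper's version of the one-variable lemma (Lemma~\ref{Lem:OneFreqFourieru}) is stated with a constant depending only on an a~priori bound $C$ for $\sup u + \norm{u}_{L^2}$, and the multivariable lemma is then applied under the hypothesis that the slice $L^2$-norm $\norm{L_n(B;\,\cdot\,,x')}_{L^2(dx_j)}$ is bounded \emph{uniformly} in $x'$. Since $\det B$ may vanish identically on some hyperplanes, this uniform slice bound can fail for $B$ as given; the paper therefore invokes an additional linear change of coordinates $M\in SL(d,\Z)$ (Theorem~6.3 of \cite{DuarteKleinBook}) to guarantee it, and then observes that $L_n(A)=L_n(A\circ M)$. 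Your route instead keeps track of the explicit dependence $|\hat u(k_j)|^2 \lesssim (M^2+\norm{u}_{L^2}^2)/k_j^2$, integrates this in $x'$, and uses only the global bound $\norm{L_n(B)}_{L^2(\T^d)}\le C_0$ from Lemma~\ref{Lem:UniformL2}. This sidesteps the change of coordinates entirely, at the price of needing the linear (in $M+\norm{u}_{L^2}$) dependence of the one-variable constant---which, as you correctly identify, comes out of the Riesz representation since the Riesz mass on a sub-strip is controlled by $\sup u - \int_\T u \le M + \norm{u}_{L^2}$. Both arguments are valid; yours is marginally more economical, while the paper's makes the one-variable input a clean black box.
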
 

Though we are interested in cocycles on $\T^d,$ with $d > 1,$ it is often possible to obtain results for $d > 1$ from the corresponding $d = 1$ result applied in each variable. We will occasionally use this technique when applying our Fourier coefficient estimate, so we also include the superior estimate we have when $d = 1.$

\begin{mylemma}
Let $A \in C_\rho^\omega(\T, M(2,\C))$ with $\det(A)$ not identically 0. Then there are $\delta = \delta(A) > 0$ and $C = C(A, \rho) < \infty$ such that for any $B \in C_\rho^\omega(\T, M(2,\C))$ with $\norm{B - A}_\rho < \delta,$ 
\begin{equation}
|\hat{L}_n(B,k)| \leq C \frac{1}{k}.
\end{equation}
\end{mylemma}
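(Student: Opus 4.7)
The plan is to exploit the fact that in one variable, subharmonic functions admitting a pointwise upper bound and an integral lower bound on a neighborhood of $\T$ have individual Fourier coefficients decaying like $1/|k|$ — this is the classical 1D sharpening, appearing in \cite{BourgainJitomirskayaContinuity} and \cite{GoldsteinSchlagAnnals}, of the $\ell^2$-shell estimate in Lemma \ref{Lem:FourierDecay}.

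First I would observe that $L_n(B,x)$ extends to a subharmonic function on the strip $|\Im z|<\rho$, since $z\mapsto \ln\|B_n(z,\omega)\|$ is subharmonic as a supremum (over unit vectors $v$) of the plurisubharmonic functions $\ln|B_n(z,\omega)v|$. By submultiplicativity,
\begin{equation*}
L_n(B,z)=\tfrac1n\ln\|B_n(z,\omega)\|\le \ln\|B\|_\rho \le C(A)
\end{equation*}
uniformly for $z$ in the strip and for all $B$ with $\|B-A\|_\rho<\delta$. This gives the uniform pointwise upper bound $M$. Next, Lemma \ref{Lem:UniformL2} and Cauchy–Schwarz yield a uniform $L^1(\T)$ bound on $L_n(B,\cdot)$, which in particular gives the integral lower bound $\int_\T L_n(B,x)\,dx\ge -M'$ with $M'=M'(A)$.

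With these hypotheses in place, I would invoke the Riesz representation for subharmonic functions on a slightly smaller strip: passing to the annulus picture via $\zeta=e^{2\pi i z}$, we may write
\begin{equation*}
L_n(B,x)=\integral{}{\log|e^{2\pi i x}-w|}{\mu(w)}+h(x),
\end{equation*}
where $\mu$ is the Riesz measure of $L_n(B,\cdot)$ on the annulus and $h$ is harmonic on a neighborhood of $\T$. The upper bound $M$ together with the lower bound $M'$ on $\int_\T L_n$ controls the total Riesz mass $\mu$ by $C(M+M')$ (via Jensen/the submean-value property), and it simultaneously bounds $\|h\|_\infty$. Since $h$ extends harmonically to a slightly larger annulus, its Fourier coefficients decay exponentially and in particular satisfy $|\hat h(k)|\le C/|k|$. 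The Fourier coefficients of $\log|e^{2\pi i x}-w|$, for $|w|$ bounded away from $1$, have the explicit form $-\tfrac{1}{2|k|}w^{\pm|k|}$, which gives $|\hat{u}_w(k)|\le C/|k|$ uniformly in $w$ in the annulus of integration. Integrating against $d\mu(w)$ and using the uniform mass bound produces the desired estimate $|\hat{L}_n(B,k)|\le C/|k|$.

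The main obstacle, as in the higher-dimensional case, is obtaining a uniform bound on the Riesz mass $\mu$ for $B$ near $A$, since this is precisely the step where in the $SL(2,\C)$ setting one would invoke a pointwise lower bound on $L_n$, which here can fail at zeros of $\det B$. This is overcome exactly as in the $d$-dimensional Lemma \ref{Lem:FourierDecay}: the uniform $L^2$ bound of Lemma \ref{Lem:UniformL2} substitutes for a pointwise bound and yields a uniform integral lower bound, which together with the trivial upper bound from submultiplicativity is enough to bound the Riesz mass of $L_n(B,\cdot)$ uniformly in $B$ and in $n$.
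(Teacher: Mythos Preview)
Your proposal is correct and follows essentially the same route as the paper. The paper reduces to the abstract one-variable subharmonic result (Lemma~\ref{Lem:OneFreqFourieru}, cited from \cite{DuarteKleinBook}), which requires a uniform pointwise upper bound plus a uniform $L^2$ bound, and then verifies these hypotheses for $L_n(B,\cdot)$ via Lemma~\ref{Lem:UniformL2}; you instead unpack the proof of that abstract result via the Riesz representation, using the same two inputs (the trivial upper bound from $\|B\|_\rho$ and the uniform $L^2$ bound of Lemma~\ref{Lem:UniformL2}) to control the Riesz mass. One small imprecision: you write that the Fourier-coefficient bound for $\log|e^{2\pi ix}-w|$ holds ``for $|w|$ bounded away from $1$'', but in fact the support of $\mu$ can approach the unit circle; the point is that $|\hat u_w(k)|\le 1/(2|k|)$ holds uniformly for \emph{all} $w$ in the closed annulus, so the argument goes through unchanged.
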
 

We are now in a position to discuss a large deviation estimate. The general strategy is to combine the estimates from Lemma \ref{Lem:CDTVar} and Lemma \ref{Lem:FourierDecay} to obtain an $L^1$ estimate, which we then improve using the following fact about BMO (bounded mean oscillation) functions. We present this next estimate using $L_N(x),$ but it, in fact, holds for plurisubharmonic functions defined on strips in $\C^d$ which obey certain a priori estimates.

\begin{mylemma} \label{Lem:BMOIneq}
Let $A \in C_\rho^\omega(\T^d, M(2,\C))$ with $\det(A)$ not identically 0. Moreover, suppose 
\begin{equation}
\norm {L_N(A,x) - \int_{\T^d} L_N(A,x) dx}_{L^1} < \epsilon.
\end{equation}
Then there is $c = c(d)$ such that 
\begin{equation}
\left|\set{x \in \T^d: \left|L_N(A,x) - \int_{\T^d} L_N(A,x) dx\right| > \epsilon^c}\right| < e^{\epsilon^{-c}}.
\end{equation}
\end{mylemma}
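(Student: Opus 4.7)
The plan is to derive the super-polynomial concentration estimate by combining the uniform $L^2$ bound from Lemma \ref{Lem:UniformL2} with the plurisubharmonic Fourier decay of Lemma \ref{Lem:FourierDecay}, through a BMO / John-Nirenberg style argument tailored to the $L^1$ hypothesis.

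First I would use Lemma \ref{Lem:UniformL2} together with plurisubharmonicity of $L_N$ on the strip $\{|\Im z_j| < \rho\}$ to conclude that $L_N\in \mathrm{BMO}(\T^d)$ with a uniform (in $N$) bound $\|L_N\|_{\mathrm{BMO}} \le M$; this is a standard consequence of plurisubharmonic function theory on strips (see Duarte--Klein \cite{DuarteKleinBook}, Chapter 6), essentially a Poisson-type boundary representation combined with the uniform $L^2$ control. Writing $f = L_N - \int_{\T^d} L_N$, John--Nirenberg then yields $|\{x\in \T^d: |f(x)| > t\}| \le C_1 e^{-c_2 t/M}$ for all $t > 0$, which is strong for large $t$ but useless at the small threshold $t = \epsilon^c$.

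Second, to obtain the small-threshold concentration I would perform a Fourier splitting $f = f_{\le K} + f_{>K}$ at a scale $K = \epsilon^{-a}$ (with $a = a(d)$ to be chosen). By Lemma \ref{Lem:FourierDecay} the tail satisfies $\|f_{>K}\|_{L^2}^2 \le C/K = C\epsilon^{a}$, so Chebyshev controls $|\{|f_{>K}| > \epsilon^c/2\}|$ polynomially. The low-frequency piece $f_{\le K}$ is a trigonometric polynomial of degree $K$, and by the triangle inequality $\|f_{\le K}\|_{L^1} \le \|f\|_{L^1} + \|f_{>K}\|_{L^1} \le \epsilon + C\epsilon^{a/2}$. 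Applying a Nikolskii-type pointwise estimate for trigonometric polynomials of degree $K$ together with the uniform BMO bound converts this $L^1$ smallness into a measure estimate for $\{|f_{\le K}| > \epsilon^c/2\}$; summing the contributions gives the claim after balancing $a$, $c$, and the BMO exponent.

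The main obstacle is the delicate balance needed to extract super-polynomial decay rather than merely polynomial smallness: the BMO bound alone only yields exponential decay at large thresholds, while the $L^1$ hypothesis alone yields only $\epsilon/t$ via Markov, which is polynomial. Neither is individually sufficient to produce $e^{-\epsilon^{-c}}$ at the small threshold $t = \epsilon^c$. It is the plurisubharmonic Fourier decay that serves as the catalyst permitting this upgrade; the cutoff $K = \epsilon^{-a}$ must be chosen large enough that the high-frequency tail is negligible, but small enough that the Nikolskii constant $K^d$ does not overwhelm the $L^1$ gain, and the final exponent $c = c(d)$ then emerges from optimizing over these competing constraints.
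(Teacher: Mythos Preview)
Your proposal has a genuine gap: neither half of your Fourier splitting can produce the super-polynomial bound $e^{-\epsilon^{-c}}$. Chebyshev on $\|f_{>K}\|_{L^2}^2 \le C\epsilon^{a}$ gives only $|\{|f_{>K}|>\epsilon^c/2\}| \lesssim \epsilon^{a-2c}$, which is polynomial in $\epsilon$. For the low-frequency piece, a Nikolskii inequality yields $\|f_{\le K}\|_\infty \le C K^{d}\|f_{\le K}\|_{L^1} \sim \epsilon^{-ad}\epsilon^{a/2}$, which is not even small once $d\ge 1$; and even if it were, a pointwise bound cannot by itself produce an exponentially small superlevel set. No balancing of $a$ and $c$ turns two polynomial bounds into $e^{-\epsilon^{-c}}$. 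The uniform bound $\|L_N\|_{\mathrm{BMO}}\le M$ you obtain in step one is, as you yourself note, useless at threshold $\epsilon^c$, and it does not interact with the splitting in any way that rescues the argument.

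The point you are missing is that the $L^1$ hypothesis combined with the \emph{subharmonic} structure forces the BMO norm itself to be small, of order $\epsilon^{1/3}$, not merely bounded. The paper does this in one dimension first: from $\|u-\langle u\rangle\|_{L^1}<\epsilon$ and Chebyshev one gets $|\{|u-\langle u\rangle|>\epsilon^{1/3}\}|<\epsilon^{2/3}$; feeding this into the subharmonic splitting lemma of Duarte--Klein (\cite{DuarteKleinBook}, Lemma~6.8), which uses the Riesz representation of $u$ on the strip, yields $\|u\|_{\mathrm{BMO}}\le C\epsilon^{1/3}$. Now John--Nirenberg at threshold $\lambda=\epsilon^{1/6}$ gives $|\{|u-\langle u\rangle|>\epsilon^{1/6}\}|\le c_1 e^{-c_2\epsilon^{1/6}/\epsilon^{1/3}}=c_1 e^{-c_2\epsilon^{-1/6}}$, which is the desired shape. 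The $d>1$ case is then obtained by iterating this one-variable estimate coordinate by coordinate, using the fibered $L^2$ bounds that the paper sets up in the appendix. The Fourier decay lemma you invoke is not used here at all; it enters elsewhere, in the proof of the large deviation theorem.
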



These results allow us to obtain a uniform large deviation estimate for $L_N(A,x),$ which will be required in Section \ref{Section:Mixed}. We present the large deviation estimate in two steps, for clarity.


\begin{mylemma}\label{Lem:LDTver1}
Let $A \in C_\rho^\omega(\T^d, M(2,\C))$ with $\det(A)$ not identically 0.  Suppose $\omega \in \T^d$ is such that
$$\norm{k\cdot\omega} > \delta_0$$
for all $0 < |k| < K_0.$ Moreover, suppose 
$$R > \sqrt{K_0} \delta_0^{-1}.$$
Then there are $\delta = \delta(A) > 0$ and $C = C(A, \rho) < \infty$ such that for any $B \in C_\rho^\omega(\T^d, M(2,\C))$ with $\norm{B - A}_\rho < \delta,$ 
\begin{equation}
\left|\set{x \in \T^d: \left|\frac 1 R \sum_{j = 0}^{R - 1} L_N(B, x + j\omega) - \left\langle L_N(B) \right\rangle\right| > C_\rho K_0^{-c}}\right| < e^{-C_\rho K_0^c}.
\end{equation}
\end{mylemma}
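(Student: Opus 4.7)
The plan is to Fourier expand the orbit-average $\tilde L(x) := \tfrac{1}{R}\sum_{j=0}^{R-1} L_N(B, x + j\omega)$, control its nonzero modes by a split into low frequencies ($|k| < K_0$, handled by the restricted Diophantine condition on $\omega$) and high frequencies ($|k| \geq K_0$, handled by Lemma \ref{Lem:FourierDecay}), and then upgrade the resulting $L^2$ smallness to a pointwise measure estimate via Lemma \ref{Lem:BMOIneq}.

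Concretely, set $\phi_R(t) := \tfrac{1}{R}\sum_{j=0}^{R-1} e^{2\pi i j t}$, so that the Fourier coefficients of $\tilde L$ satisfy $\hat{\tilde L}(k) = \hat{L}_N(B,k)\,\phi_R(k\cdot\omega)$. The $k = 0$ mode is exactly $\langle L_N(B)\rangle$. For $0 < |k| < K_0$, the hypothesis $\norm{k\cdot\omega} > \delta_0$ gives the standard geometric-sum bound
\[
|\phi_R(k\cdot\omega)| \;=\; \frac{|1 - e^{2\pi i R\, k\cdot\omega}|}{R\,|1 - e^{2\pi i\, k\cdot\omega}|} \;\leq\; \frac{C}{R\,\delta_0},
\]
while for $|k| \geq K_0$ we use only the trivial bound $|\phi_R| \leq 1$. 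Parseval's identity then yields
\[
\norm{\tilde L - \langle L_N(B)\rangle}_{L^2}^2 \;\leq\; \frac{C}{R^2\delta_0^2}\,\norm{L_N(B)}_{L^2}^2 \;+\; \sum_{|k|\geq K_0} |\hat L_N(B,k)|^2.
\]
Lemma \ref{Lem:UniformL2} bounds $\norm{L_N(B)}_{L^2}$ uniformly in $N$ and in $B$ close to $A$, Lemma \ref{Lem:FourierDecay} bounds the tail by $C/K_0$, and the hypothesis $R > \sqrt{K_0}\,\delta_0^{-1}$ forces the first term to be $\leq C/K_0$ as well. Hence $\norm{\tilde L - \langle L_N(B)\rangle}_{L^2} \leq C K_0^{-1/2}$, and Cauchy--Schwarz gives the same bound in $L^1$.

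To convert this $L^1$ estimate into the asserted measure estimate, I would apply Lemma \ref{Lem:BMOIneq} to $\tilde L$. Since $\tilde L$ is a finite convex combination of translates of the plurisubharmonic function $L_N(B,\,\cdot\,)$, it is itself plurisubharmonic on the strip $|\Im z_j|<\rho$, with the uniform $L^2$ control inherited from Lemma \ref{Lem:UniformL2}. The excerpt explicitly notes that Lemma \ref{Lem:BMOIneq} holds at this level of generality. Taking $\epsilon = C K_0^{-1/2}$ in Lemma \ref{Lem:BMOIneq} then produces a threshold of order $\epsilon^c \sim K_0^{-c'}$ with exceptional set of measure at most $\exp(-K_0^{c'})$, which is the stated conclusion after relabeling the constants.

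I expect the main obstacle to be purely bookkeeping: verifying that Lemma \ref{Lem:BMOIneq} does apply to the auxiliary function $\tilde L$ with constants uniform in $B$ in a neighborhood of $A$. This reduces to checking that translation and convex combination preserve the plurisubharmonicity and the uniform $L^2$ bound on the strip, which is immediate from Lemma \ref{Lem:UniformL2}. All other steps are direct computations once the uniform $L^2$ bound and the Fourier decay estimate are in hand, so the proof should reduce to carefully tracking constants through the split and the application of the BMO lemma.
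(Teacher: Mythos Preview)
Your proposal is correct and follows essentially the same route as the paper: Fourier expand the orbit average, split into low modes (controlled by the geometric-sum bound and the restricted Diophantine condition together with the uniform $L^2$ bound) and high modes (controlled by Lemma~\ref{Lem:FourierDecay}), obtain an $L^2$ and hence $L^1$ bound of order $K_0^{-1/2}$, and finish with Lemma~\ref{Lem:BMOIneq} applied to the averaged function. Your observation that $\tilde L$ is a convex combination of translates of $L_N(B,\cdot)$, hence plurisubharmonic with the same uniform bounds, is exactly what justifies invoking Lemma~\ref{Lem:BMOIneq} here.
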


\begin{proof}
Consider 
$$\left|\frac 1 R \sum_{j = 0}^{R-1} L_N(B, x + j\omega)  - \int_{\T^d}  L_n(B, x) dx \right|.$$ 
We have:
\begin{align}
\frac 1 R \sum_{j = 0}^{R-1} L_N(B, x + j\omega)  &= \frac 1 R \sum_{j = 0}^{R-1} \sum_{k \in \Z^d} \hat{L}_N(k)(B) e^{2\pi i k\cdot(x + j \omega)}\\
&= \frac 1 R \sum_{j = 0}^{R-1} \hat{L}_N(0)(B) + \frac 1 R \sum_{j = 0}^{R-1} \sum_{0 < |k| \leq K_0} \hat{L}_n(k)(B) e^{2\pi i k\cdot (x + j \omega)}\\
&\quad + \frac 1 R \sum_{j = 0}^{R-1} \sum_{|k| > K_0} \hat{L}_N(k)(B) e^{2\pi i k\cdot(x + j \omega)}\\
&= (I) + (II) + (III).
\end{align}
Observe that we have $$(I) = \int_{\T^d} L_N(B,x) dx.$$ Thus

$$(I) - \int_{\T^d} L_N(B,x) dx = 0,$$


Next, for $0 < |k| \leq K_0$ we may appeal to our condition on $\omega$ to conclude $$\left|\frac 1 R \sum_{j = 0}^{R-1} e^{2\pi i k \cdot j \omega}\right| \lesssim \frac{2}{R\norm{k\omega}} \leq 2 K_0^{-1/2}.$$ Thus
\begin{align}
\norm{(II)}_{L^2} &\leq  C K_0^{-1/2}.
\end{align} 
Here $C$ depends only on $A.$

Finally, for $|k| > K_0,$ we know $\sum_{|k|> K_0}|\hat{L}_N(k)(B)|^2 < C |K_0|^{-1},$ where $C$ depends only on $A.$ Moreover, $|e^{2\pi i j k\cdot \omega}| = 1,$ so
\begin{align}
\norm{ (III) }_2 &\leq \left(\sum_{|k| > K_0} |\hat{u}_n(k)|^2 \right)^{1/2} \\
&< C K_0^{-1/2}.
\end{align}

Hence $$\norm{\frac 1 R \sum_{j = 0}^{R-1} L_N(B, x + j\omega)  - \int_{\T^d} L_N(B, x) dx}_{L^1} < CK_0^{-1/2}.$$ 

Now we appeal to Lemma \ref{Lem:BMOIneq} applied to the function $\frac 1 R \sum_{j = 0}^{R - 1} L_N(B, x + j\omega) - \int_{\T^d} L_N(B, x),$ which completes our proof.
\end{proof}


\begin{mythm}\label{Thm:LDT}
Let $A \in C_\rho^\omega(\T^d, M(2,\C))$ with $\det(A)$ not identically 0.  Suppose $\omega \in \T^d$ is such that
$$\norm{k\cdot\omega} > \delta_0$$
for all $0 < |k| < K_0.$ Moreover, suppose 
$$N > K_0 \delta_0^{-1}.$$
Then there are $\delta = \delta(A) > 0$ and $C = C(A, \rho) < \infty$ such that for any $B \in C_\rho^\omega(\T^d, M(2,\C))$ with $\norm{B - A}_\rho < \delta,$ 
\begin{equation}
\left|\set{x \in \T^d: \left|L_N(B, x) - L_N(B) \right| > C_\rho K_0^{-c}}\right| < e^{-C_\rho K_0^c}.
\end{equation}
\end{mythm}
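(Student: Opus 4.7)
The plan is to combine the Birkhoff-averaged large deviation estimate from Lemma~\ref{Lem:LDTver1} with the pointwise variation bound from Lemma~\ref{Lem:CDTVar} to promote the former into an LDT for $L_N(B, x)$ itself. The underlying idea is that $L_N(B, x)$ is well approximated, off a set of exponentially small measure, by its Birkhoff average $\frac{1}{R}\sum_{j=0}^{R-1} L_N(B, x + j\omega)$, to which Lemma~\ref{Lem:LDTver1} applies directly. I set $R := \lfloor\sqrt{K_0}\delta_0^{-1}\rfloor + 1$, so that $R > \sqrt{K_0}\delta_0^{-1}$ as required. Lemma~\ref{Lem:LDTver1} then produces a set $E_1 \subset \T^d$ with $|E_1| < e^{-C_\rho K_0^c}$ such that
\begin{equation*}
\left|\frac{1}{R}\sum_{j=0}^{R-1} L_N(B, x+j\omega) - \langle L_N(B)\rangle\right| \leq C_\rho K_0^{-c}
\end{equation*}
for every $x \notin E_1$.

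I then fix some $a \in (1/2, 1)$ and apply Lemma~\ref{Lem:CDTVar} to produce a set $F_N$ with $|F_N| < e^{-N^{1-a}}$ outside of which $|L_N(B, x+\omega) - L_N(B, x)| \leq CN^{-a}$. Since the shift by $\omega$ is measure-preserving, setting $E_2 := \bigcup_{i=0}^{R-1}(F_N - i\omega)$ gives $|E_2| \leq R e^{-N^{1-a}}$, and a telescoping argument along the orbit yields
\begin{equation*}
|L_N(B, x + j\omega) - L_N(B, x)| \leq CjN^{-a} \leq CRN^{-a}
\end{equation*}
for every $0 \leq j \leq R-1$ and every $x \notin E_2$. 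Averaging in $j$ and combining with the estimate of the previous paragraph, for $x \notin E_1 \cup E_2$ we obtain
\begin{equation*}
|L_N(B, x) - \langle L_N(B)\rangle| \leq C_\rho K_0^{-c} + CRN^{-a}.
\end{equation*}

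It remains to verify that the error $CRN^{-a}$ is absorbed into a $K_0^{-c'}$-term and that the measure $|E_1 \cup E_2| \leq e^{-C_\rho K_0^c} + Re^{-N^{1-a}}$ is dominated by $e^{-C_\rho K_0^{c'}}$ for a suitably chosen $c' > 0$. Using $R \leq 2\sqrt{K_0}\delta_0^{-1}$ together with the hypothesis $N > K_0\delta_0^{-1}$, one has
\begin{equation*}
RN^{-a} \leq 2 K_0^{1/2-a} \delta_0^{a-1} \qquad \text{and} \qquad N^{1-a} \geq (K_0\delta_0^{-1})^{1-a},
\end{equation*}
so by taking $a$ sufficiently close to $1$ and $c'$ correspondingly small, both quantities fit inside the claimed form $C_\rho K_0^{\pm c'}$. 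This careful calibration of the exponents $a$ and $c'$ against the single relation $N > K_0 \delta_0^{-1}$ is the main (and essentially only) technical point of the argument; it is routine once the scheme above is in place, and no new ideas beyond the synthesis of Lemmas~\ref{Lem:LDTver1} and~\ref{Lem:CDTVar} are needed.
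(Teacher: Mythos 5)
Your overall plan — transfer the large deviation estimate for the Birkhoff average (Lemma~\ref{Lem:LDTver1}) to $L_N$ itself via the telescoping estimate of Lemma~\ref{Lem:CDTVar} — is the same as the paper's, and your simplification of working directly with the two bad sets instead of rederiving an $L^1$ bound and re-applying Lemma~\ref{Lem:BMOIneq} is fine in principle. The gap is in your choice of $R$ and the resulting error bound.

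You take the minimal admissible $R \sim \sqrt{K_0}\,\delta_0^{-1}$ and correctly compute that the telescoping error satisfies $R N^{-a} \leq 2 K_0^{1/2-a}\delta_0^{a-1}$. You then claim that taking $a$ close to $1$ lets this fit inside $C_\rho K_0^{-c'}$; this is false. For any fixed $a<1$, the factor $\delta_0^{a-1} = \delta_0^{-(1-a)}$ is unbounded as $\delta_0 \to 0$, and the hypotheses place no lower bound on $\delta_0$ in terms of $K_0$ — indeed, the whole point of this theorem (as emphasized in the introduction) is that it does \emph{not} require a Diophantine condition on $\omega$, so $\delta_0$ can be arbitrarily small for a given $K_0$. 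Consequently $RN^{-a}$ is not controlled by any power of $K_0$ alone, and your final estimate for $|L_N(B,x) - \langle L_N(B)\rangle|$ does not close. (Your measure-side estimate $Re^{-N^{1-a}}$ is actually fine, because the super-exponential decay of $e^{-N^{1-a}}$ swallows the polynomial factor $\delta_0^{-1}$.) The paper avoids this by taking $R$ to be a power of $N$ rather than a function of $K_0, \delta_0$ — namely $R = \sqrt{N}$ — so the telescoping error becomes $R N^{-a} = N^{1/2-a}$, which is a pure power of $N$ and hence bounded by $K_0^{1/2-a}$ using $N > K_0\delta_0^{-1} \geq K_0$. That is the key structural point you are missing: the error must be expressed in $N$ alone, so that the lone hypothesis $N > K_0\delta_0^{-1}$ can absorb the $\delta_0$-dependence. (You do then need to check $\sqrt{N} > \sqrt{K_0}\delta_0^{-1}$, which is a separate calibration, but the essential repair is the choice of $R$.)
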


\begin{proof}

Apply Lemma \ref{Lem:LDTver1} with $R = \sqrt{N}.$ We obtain 
\begin{equation}
\left|\set{x \in \T^d: \left|\frac 1 {\sqrt{N}} \sum_{j = 0}^{\sqrt{N} - 1} L_N(B, x + j\omega) - \left\langle L_N(B) \right\rangle\right| > C_\rho K_0^{-c}}\right| < e^{-C_\rho K_0^c}.
\end{equation}

Moreover, recalling 
$$\left|\set{x: |L_N^j(B, x + j\omega) - L_N(B, x)| < C |j|N^{-1/2}}\right| < e^{-N^{1/2}},$$ 
where $C$ depends only on $A,$ away from a set of measure at most $R^2 e^{-N^{1/2}} < e^{-N^{1/3}},$ we have
\begin{align}
\frac 1 {\sqrt{N}} \sum_{j = 0}^{\sqrt{N}-1} L_N(B, x + j\omega) &= \frac 1 {\sqrt{N}} \sum_{j = 0}^{\sqrt{N}-1} \left( L_N(B, x) + O(|j|/N)\right)\\
&=  L_N(B, x)  + O(\sqrt{N}/N)\\
&\leq  L_N(B, x)  + CK_0^{-1/2}.
\end{align} 
Triangle inequality thus yields
$$\norm{L_N(B, x) - \int_{\T^d} L_N(B, x) }_{L^1} < CK_0^{-1/2}.$$
We now conclude in the same was as before.
\end{proof}

At this point, we have suitable estimates for frequencies which obey a diophantine estimate at certain length scales. We are interested, however, in general frequencies. The following estimate will be used in the absence of a diophantine frequency. This follows as a consequence of the Fourier coefficient decay estimate.
\begin{mylemma}\label{Lem:CDT}
Let $A \in C_\rho^\omega(\T^d, M(2,\C))$ with $\det(A)$ not identically 0. Then there are $\delta = \delta(A) > 0$ and $C = C(A, \rho) < \infty$ such that for any $B \in C_\rho^\omega(\T^d, M(2,\C))$ with $\norm{B - A}_\rho < \delta,$ and for $a\in \T$ small and $\kappa > 0,$ we have, uniformly in $N,$
\begin{equation}
\left|\set{x\in \T^d: |L_N(B,x) - L_N(B, x + a)| > \kappa}\right| < C\kappa^{-3}|a|
\end{equation}
\end{mylemma}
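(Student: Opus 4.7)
The approach will be Fourier-analytic on $\T^d$. Starting from the Fourier expansion
$$L_N(B, x+a) - L_N(B, x) = \sum_{k \in \Z^d} \hat{L}_N(B, k)\bigl(e^{2\pi i k \cdot a} - 1\bigr) e^{2\pi i k \cdot x},$$
Parseval reduces the problem to estimating
$$\sum_{k \in \Z^d} |\hat{L}_N(B, k)|^2\, |e^{2\pi i k \cdot a} - 1|^2,$$
after which Chebyshev yields the desired weak-type bound.

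The core of the argument exploits the dichotomy $|e^{2\pi i k \cdot a} - 1| \leq \min(2, 2\pi|k|\,|a|)$. I would split the Fourier sum at the threshold $K_0 = 1/|a|$. For $|k| > K_0$, the elementary bound $|e^{2\pi i k \cdot a} - 1|^2 \leq 4$ combined with Lemma \ref{Lem:FourierDecay} contributes at most $4C/K_0 = C|a|$ to the $L^2$ norm squared. For $|k| \leq K_0$, after applying $|e^{2\pi i k \cdot a} - 1|^2 \leq (2\pi)^2|k|^2|a|^2$, one is reduced to controlling the weighted sum $\sum_{|k| \leq K_0} |k|^2 |\hat{L}_N(B, k)|^2$. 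In $d > 1$ we do not have the sharp per-coefficient estimate $|\hat{L}_N(B, k)| \lesssim 1/|k|$ that holds when $d = 1$, so I would run a dyadic decomposition into shells $\set{k : 2^j \leq |k| < 2^{j+1}}$: on each shell $|k|^2 \leq 4^{j+1}$, while Lemma \ref{Lem:FourierDecay} applied with cutoff $2^j$ bounds the shell's $\ell^2$ mass by $C/2^j$. Summing the resulting geometric series over $j \leq \log_2 K_0$ gives $\sum_{|k| \leq K_0} |k|^2 |\hat{L}_N(B, k)|^2 \leq CK_0$, so the low-frequency contribution is at most $C|a|^2 \cdot K_0 = C|a|$.

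Combining the two pieces yields $\norm{L_N(B, \cdot + a) - L_N(B, \cdot)}_{L^2(\T^d)}^2 \leq C|a|$, and Chebyshev then gives $|\set{x \in \T^d: |L_N(B, x) - L_N(B, x+a)| > \kappa}| \leq C|a|/\kappa^2$, which implies the stated $\kappa^{-3}|a|$ bound in the relevant regime (we may always take $\kappa$ small, since for $\kappa$ bounded away from zero the statement is absorbed into the constant using the uniform $L^2$ estimate of Lemma \ref{Lem:UniformL2}; alternatively, a higher-moment Chebyshev applied together with uniform $L^p$ estimates, which follow from Lemma \ref{Lem:UnifLoj} applied to $|\det A|$, can recover the exact $\kappa^{-3}$ exponent).

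The main obstacle is the dyadic bookkeeping step used to control the Sobolev-type low-frequency sum $\sum_{|k| \leq K_0} |k|^2 |\hat{L}_N(B, k)|^2$: in $d > 1$ we only have an $\ell^2$ tail estimate, so the weighted mass on low frequencies must be teased out by iterating Lemma \ref{Lem:FourierDecay} across successive dyadic annuli rather than by invoking any stronger pointwise decay of individual Fourier coefficients. Everything else reduces to routine Parseval/Chebyshev manipulation.
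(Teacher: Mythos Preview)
Your argument is correct and in fact yields the sharper bound $C\kappa^{-2}|a|$, which implies the stated $C\kappa^{-3}|a|$ for $\kappa<1$ (the only regime of interest). The route, however, differs from the paper's in two respects worth noting.

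First, the paper does not work directly on $\T^d$. It proves the $d=1$ case and then invokes Fubini (telescoping the shift $x\mapsto x+a$ across coordinates). In one dimension the sharper per--coefficient estimate $|\hat L_N(B,k)|\le C(1+|k|)^{-1}$ is available, so the low-frequency block $|k|<K_0$ is bounded \emph{pointwise} by $CK_0|a|$, not merely in $L^2$. This is what makes the choice $K_0\sim\kappa|a|^{-1}$ natural: the low-frequency piece is then $\le\kappa$ everywhere, and only the high-frequency tail needs Chebyshev, giving $\|(\mathrm{II})\|_{L^2}^2\le CK_0^{-1}\sim C\kappa^{-1}|a|$ and hence measure $\le C\kappa^{-3}|a|$ on the nose.

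Second, because you stay in $\T^d$ and only have the $\ell^2$-tail estimate of Lemma~\ref{Lem:FourierDecay}, you are forced into the dyadic summation to control $\sum_{|k|\le K_0}|k|^2|\hat L_N(k)|^2$. That step is perfectly valid and is the genuine content of your proof; it has the pleasant consequence that the sharp one-variable decay is never needed, and the threshold $K_0=|a|^{-1}$ is independent of $\kappa$. The trade-off is that you lose the pointwise control of the low-frequency piece and must run Chebyshev on the whole difference, which is why you land at $\kappa^{-2}$ rather than $\kappa^{-3}$ --- a stronger estimate, but one that does not match the lemma's stated exponent without your closing remark.
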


\begin{Remark} 
We will need $a$ to be small so that $C(A)^{-1}\kappa |a|^{-1} \geq 1.$ This is necessary for us to define $K_0$ appropriately in our proof (see \eqref{eq:K0DefCDT}).
\end{Remark}

\begin{proof}
We will prove this for $d = 1.$ The general case follows from the $d = 1$ case and Fubini's theorem.

We have
$$L_N(B,x) = \sum_{k \in \Z} \hat{L}_n(B,k) e^{2\pi k\cdot x},$$
so
\begin{align}
L_N(B,x) - L_N(B, x + a) &= \sum_{k \in \Z} \hat{L}_n(B,k) e^{2\pi k\cdot x} (1 - e^{2\pi k\cdot a})\\
&= \sum_{|k| < K_0} + \sum_{|k| \geq K_0}\\
&= (I) + (II).
\end{align}
Recall that, when $d = 1,$ we have
$$\left|\hat{L}_n(B,k)\right| \leq C(A) (1 + |k|)^{-1}.$$
It follows that
\begin{align}
|(I)| &\leq \sum_{|k| < K_0} \left|\hat{L}_n(B,k)\right| |(1 - e^{2\pi k\cdot a})|\\
&\leq \sum_{|k| < K_0} C(A) (1 + |k|)^{-1} |k||a|\\
&\leq C(A)K_0 |a|.
\end{align}

For $(II),$ we have
\begin{align}
\norm{(II)}_{L^2}^2 &\leq \sum_{|k| \geq K_0} 2 \left|\hat{L}_n(B,k)\right|^2\\
&\leq C(A) K_0^{-1}.
\end{align}

Taking 
\begin{equation}
K_0 \sim C(A)^{-1}\kappa |a|^{-1},\label{eq:K0DefCDT}
\end{equation}
we have
\begin{align}
|(I)| &\leq \kappa\\
\norm{(II)}_{L^2}^2 &\leq C(A) \kappa^{-1} |a|.
\end{align}
Applying Chebyschev's inequality,
\begin{equation}
\left|\set{x\in \T: |(II)| > \kappa}\right| < C(A)\kappa^{-3}|a|.
\end{equation}
It follows that
\begin{align}
\big|\big\{x: &|L_N(B,x) - L_N(B, x + a)| > \kappa\big\}\big| \\
&\leq \left|\set{x: |\sum_{|k| < K_0}| > \kappa}\right| + \left|\set{x: |\sum_{|k| \geq K_0}| > \kappa}\right|\\
&\leq C(A)\kappa^{-3}|a|.
\end{align}

\end{proof}

\section{Finite-scale continuity}\label{Section:FiniteScaleCont}


In the Schr\"odinger cocycle case (and the $SL(2\C)$ case more generally), one of the key observations is that, for fixed $N,$ $L_N(A,\omega)$ is jointly continuous in $A$ and $\omega$ for any $\omega.$ This is a simple consequence of the everywhere invertibility of the cocycle, $A.$ The analogous result for $M(2,\C)$ cocycles requires an argument.

In this section, we establish continuity of the finite-scale Lyapunov exponents, $L_N(A,\omega)$ jointly in $A$ and $\omega$ for any not identically singular $A$ and any frequency $\omega.$

\begin{mylemma}
Let $(A,\omega) \in C_\rho(\T^d, M(2,\C)) \times \T^d, A \not\equiv 0,$ be an analytic quasiperiodic cocycle. Then for any $\epsilon > 0,$ there exists constants $\delta = \delta(A,\epsilon), C = C(A,\epsilon),$ and $N_0 = N_0(A,\epsilon),$ such that for any $B \in C_\rho(\T^d,M(2,\C))$ with $\norm{A - B}_\rho < \delta,$ we have
$$|L_N(A,\omega) - L_N(B,\omega)| < C_A\epsilon$$
for all $N > N_0.$
\end{mylemma}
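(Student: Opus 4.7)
The plan is to exploit the identity $L_N(A,\omega) = L'_N(A,\omega) - \frac{1}{2}\int_{\T^d} \ln|\det A(x)|\,dx$ from the introduction and establish continuity in $A$ of each summand separately.

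For the determinant term $\int_{\T^d} \ln|\det A(x)|\,dx$, I would combine the pointwise a.e.\ convergence of $\ln|\det B|$ to $\ln|\det A|$ (which follows from uniform convergence $\det B \to \det A$ on the strip) with the uniform integrability provided by the uniform Lojasiewicz inequality applied to $\det A$ (Lemma \ref{Lem:UnifLoj}) and the uniform $L^2$ bound (Lemma \ref{Lem:UniformL2}). Vitali's convergence theorem then yields continuity in $A$, with no dependence on $N$ or $\omega$.

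For the norm term $L'_N(A,\omega) = \frac{1}{N}\int_{\T^d} \ln\norm{A_N(x,\omega)}\,dx$, I would introduce a truncation
\[
\tilde L_N(A,\omega) := \frac{1}{N}\int_{\T^d} \max\bigl(\ln\norm{A_N(x,\omega)},\, -NM\bigr)\, dx.
\]
The pointwise inequality $\norm{A_N(x)}^2 \geq |\det A_N(x)| = \prod_{j=0}^{N-1}|\det A(x+j\omega)|$ (valid for $2\times 2$ matrices by the singular value decomposition) implies that the set where the truncation is active is contained in $\{x : \sum_{j=0}^{N-1}\ln|\det A(x+j\omega)| < -2NM\}$. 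Translation-invariance combined with the uniform $L^2$ bound on $\ln|\det A|$ from Lemma \ref{Lem:UniformL2} and Chebyshev/Cauchy--Schwarz yields $|L'_N(A,\omega) - \tilde L_N(A,\omega)| \leq C(A) M^{-1/2}$ uniformly in $N$ and $\omega$; choosing $M$ large depending on $A$ and $\epsilon$ renders this $<\epsilon$, and the same bound holds for $B$ when $\norm{B-A}_\rho < \delta_0(A)$.

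To estimate $|\tilde L_N(A,\omega) - \tilde L_N(B,\omega)|$, the integrand vanishes on the set where both cutoffs are active; on the complement it is pointwise bounded by $|\ln\norm{A_N} - \ln\norm{B_N}|$. I would then invoke Lemma \ref{Lem:UnifLoj} applied to $\det A$ to excise the set of $x$ for which some $|\det A(x+j\omega)|$ falls below a threshold $\tau$; the excised measure is at most $NS\tau^b$ by the union bound, and outside this set both $A_N(x)$ and $B_N(x)$ admit uniform pointwise lower bounds on their norms, so that a telescoping comparison gives $|\ln\norm{A_N(x)} - \ln\norm{B_N(x)}| \leq C(N,M,\tau)\norm{A-B}_\rho$. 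The contribution of the excised set is controlled by Cauchy--Schwarz against the uniform $L^2$ bound of Lemma \ref{Lem:UniformL2}.

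The main obstacle is the balancing needed to obtain uniformity in $N$: the telescoping Lipschitz constant $C(N,M,\tau)$ grows exponentially in $N$, while the Lojasiewicz excision measure is linear in $N$ and the truncation tail is $O(M^{-1/2})$. Ensuring a total error $<C_A\epsilon$ with $\delta$ independent of $N$ requires a simultaneous choice of $M=M(A,\epsilon)$, $\tau=\tau(A,\epsilon)$, $\delta=\delta(A,\epsilon)$, and $N_0=N_0(A,\epsilon)$, which is precisely what produces the ``$N>N_0$'' clause in the statement.
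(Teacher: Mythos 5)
Your overall strategy runs parallel to the paper's: both arguments hinge on the determinant lower bound $\norm{A_N(x)}^2 \geq |\det A_N(x)| = \prod_{j}|\det A(x+j\omega)|$, the uniform Lojasiewicz inequality (Lemma~\ref{Lem:UnifLoj}) to excise the set where the determinant is small, the uniform $L^2$ bound (Lemma~\ref{Lem:UniformL2}) plus Cauchy--Schwarz to control the excised set, and a telescoping Lipschitz estimate on the remainder. The cosmetic differences --- separating off the determinant integral via the identity $L_N = L'_N - \frac12\int\ln|\det A|$ and handling it by Vitali, and replacing the paper's indicator sets $F_A,F_B$ with a truncation $\max(\cdot,-NM)$ --- are both sound and, if anything, make the bookkeeping cleaner.

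There is, however, a concrete gap in your final ``balancing'' step. You propose choosing the Lojasiewicz threshold $\tau=\tau(A,\epsilon)$ \emph{independently of $N$}, but then the excised set $\bigcup_{j<N}\set{x:|\det A(x+j\omega)|<\tau}$ has measure bounded only by $NS\tau^b$, which grows without bound as $N\to\infty$; for $N$ large this bound exceeds $1$ and the excision carries no information, and even before that the Cauchy--Schwarz contribution $\norm{L_N}_{L^2}\cdot(NS\tau^b)^{1/2}$ diverges like $\sqrt{N}$. The paper resolves exactly this tension by letting the threshold depend on $N$, taking $\tau=e^{-N^{\delta_0}}$ so that the Lojasiewicz measure bound $NS e^{-\sigma N^{\delta_0}}\to 0$; this is what ultimately produces the ``$N>N_0$'' clause (the excision and its integral contributions only become small for $N$ large). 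The price is that the resulting lower bound $\norm{A_N(x)}\geq e^{-N^{1+\delta_0}/2}$ on the good set, and hence the telescoping Lipschitz constant, are $N$-dependent; the paper's proof correspondingly takes $\norm{A-B}_\rho$ ``sufficiently small (dependent on $A$, $N$, and $\delta_0$)'', so $\delta$ is \emph{not} independent of $N$ despite what the lemma statement nominally claims. (This is harmless for the paper's application, which only invokes the lemma at a single fixed $N$.) So the fix for your argument is to give up on a uniform-in-$N$ triple $(M,\tau,\delta)$ and allow $\tau$, and hence $\delta$, to depend on $N$ --- at which point your proof becomes essentially identical to the paper's.
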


\begin{proof}

Let $\delta_0 > 0,$ fix $N > N_0,$ and set
\begin{align}
F_{A,\delta_0} &:= \set{x\in \T^d: \norm{A_N(x)} < e^{-N^{1 + \delta_0}}}\\
F_{B,\delta_0} &:= \set{x\in \T^d: \norm{B_N(x)} < e^{-N^{1 + \delta_0}}}\\
G &:= \set{x\in \T^d: \norm{A_N(x)} \leq \norm{B_N(x)}}.
\end{align}
Note that
\begin{align}
|L_N(A,\omega) - L_N(B,\omega)| &= \left|\int_{\T^d} \frac 1 N \ln \left(\frac{\norm{A_N(x)}}{\norm{B_N(x)}}\right)dx\right|\\
&= \left|\int_{F_A\cap F_B} + \int_{F_A^c \cap F_B} + \int_{F_A \cap F_B^c} + \int_{F_A^c \cap F_B^c}\right|.
\end{align}

Observe, for $x\in F_A^c \cap F_B^c,$ we have
\begin{equation}
\left|L_N(A,x) - L_N(B,x)\right| = \begin{cases} \frac 1 N \ln \left(\frac{\norm{A_N(x)}}{\norm{B_N(x)}}\right) & x\not\in G\\
\frac 1 N \ln \left(\frac{\norm{B_N(x)}}{\norm{A_N(x)}}\right) & x\in G
\end{cases}.
\end{equation}
Consider $x \in G.$ The case $x\not\in G$ will be the same. We have
\begin{align}
\frac 1 N \ln \left(\frac{\norm{B_N(x)}}{\norm{A_N(x)}}\right) &\leq \frac1 N \norm{A_N(x)}^{-1} C_A^N \norm{A - B}_\rho\\
&\leq e^{N^{1 + \delta_0}}C_A^N\norm{A - B}_\rho.
\end{align}
Taking $\norm{A - B}_\rho$ sufficiently small (dependent on $A, N,$ and $\delta_0)$ensures this is no more than $\epsilon/4.$

Next, consider $x \in F_A\cap F_B^c.$ The case $x \in F_A^c \cap F_B$ is the same. We have, necessarily, $x \in G,$ and thus
\begin{align}
|L_N(A,x) - L_N(B,x)| &= 1 N \ln \left(\frac{\norm{B_N(x)}}{\norm{A_N(x)}}\right)\\
&= \frac1N \left(\ln\norm{B_N(x)} - \ln\norm{A_N(x)}\right)\\
&\leq \frac1N\left(\ln\norm{B_N(x)} - \sum_{j = 1}^N \ln|\det A(x + j\omega)|\right)\\
&\leq \frac1N\left(NC_A - \sum_{j = 1}^N \ln|\det A(x + j\omega)|\right).
\end{align}
Moreover, $$F_A\cap F_B^c \subset F_A \subset \bigcup_{j = 1}^N \set{x: |\det(x + j\omega)| < e^{-N^{\delta_0}}} =: \bigcup_{j = 1}^N S_j.$$ Thus
\begin{align}
\left|\int_{F_A \cap F_B^c}\right| &\leq \sum_{j = 1}^N \frac1N \int_{S_j}\left(NC_A - \sum_{k = 1}^N \ln|\det A(x + k\omega)|\right)dx\\
&\leq C_A \sum_{j = 1}^N |S_j| + \frac1N \sum_{j = 1}^N\sum_{k = 1}^N\left|\int_{S_j}\ln|\det A(x + k\omega)|dx\right|\\
&\leq NC_Ae^{-\sigma N^{\delta_0}} + \frac1N \sum_{j = 1}^N\sum_{k = 1}^N\left|\int_{S_j}\ln|\det A(x + k\omega)|dx\right|.
\end{align}
Now, we may use Lemmas \ref{Lem:UniformL2} and \ref{Lem:UnifLoj} to bound
$$\frac1N \sum_{j = 1}^N\sum_{k = 1}^N\left|\int_{S_j}\ln|\det A(x + k\omega)|dx\right| \leq Ce^{-\sigma N^{\delta_0}}N^{1 + \delta_0}.$$
Putting all of this together and taking $N$ sufficiently large guarantees this is no larger than $\epsilon/4.$

The case $x \in F_A\cap F_B$ is similar.
\end{proof}


\section{Avalanche principle and immediate consequences}\label{Section:AP}


\begin{mythm}[Avalanche Principle] \label{Thm:AvalanchePrinciple}Suppose $A_1,...,A_n \in SL(2,\C)$ are such that
\begin{equation}
\min_{1 \leq j \leq n} \norm{A_j} \geq \mu > n
\end{equation}
and
\begin{equation}
\max_{1 \leq j < n}\left|\ln\norm{A)_j} + \norm{A_{j + 1}} - \ln\norm{A_{j + 1}A_j}\right| \leq \frac12 \ln \mu.
\end{equation}
Then 
\begin{equation}
\left|\ln \norm{A_n \cdot A_1} + \sum_{j = 2}^{n - 1} \ln\norm{A_j} - \sum_{j = 1}^{n - 1} \ln\norm{A_{j + 1}A_j}\right| < C \frac n \mu.
\end{equation}
\end{mythm}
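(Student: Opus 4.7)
The plan is to prove the stated identity by comparing two expressions for $\ln\norm{A_n\cdots A_1}$: one obtained by tracking the image of a specific input vector through the product, and one obtained by summing the local ``alignment defects'' across pairwise products $A_{j+1}A_j$. For each $A_j\in SL(2,\C)$ I would take the SVD $A_j = s_j^+ \tilde u_j^+(u_j^+)^* + s_j^- \tilde u_j^-(u_j^-)^*$, where $s_j^+ = \norm{A_j}\geq \mu$ and $s_j^- = 1/s_j^+$, and introduce the alignment coefficient $a_j := \langle \tilde u_j^+, u_{j+1}^+\rangle$ for $1\leq j<n$, which measures the overlap between the dominant output direction of $A_j$ and the dominant input direction of $A_{j+1}$.

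For the local identity, I would write $A_{j+1}A_j$ in the bases $(u_j^\pm)$ and $(\tilde u_{j+1}^\pm)$, producing a $2\times 2$ matrix whose top-left entry is $s_{j+1}^+ s_j^+ a_j$ while each other entry is bounded in modulus by $s_{j+1}^+ s_j^+ \mu^{-2}$. Combining the Frobenius identity $\sigma_+^2+\sigma_-^2=\sum|\text{entries}|^2$ with the $SL(2,\C)$ relation $\sigma_+\sigma_-=1$ yields
\[
\ln\norm{A_{j+1}A_j} = \ln\norm{A_{j+1}} + \ln\norm{A_j} + \ln|a_j| + O\bigl(\mu^{-2}|a_j|^{-2}\bigr),
\]
and the hypothesis $\ln\norm{A_{j+1}A_j}\geq \ln\norm{A_{j+1}}+\ln\norm{A_j}-\tfrac12\ln\mu$ then forces $|a_j|\geq \mu^{-1/2}(1+O(\mu^{-1}))$, reducing the error to $O(\mu^{-1})$ per $j$. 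Summing over $j$ gives
\[
\sum_{j=1}^{n-1}\ln\norm{A_{j+1}A_j} - \sum_{j=2}^{n-1}\ln\norm{A_j} = \sum_{j=1}^{n}\ln\norm{A_j} + \sum_{j=1}^{n-1}\ln|a_j| + O(n/\mu).
\]

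Next, I would establish the matching expression for the global norm. Setting $B_k:=A_k\cdots A_1$ and $e_0:=u_1^+$, I would prove by induction on $k$ that
\[
B_k e_0 = \Bigl(\prod_{j=1}^{k}s_j^+\Bigr)\Bigl(\prod_{j=1}^{k-1}a_j\Bigr)\bigl(\tilde u_k^+ + \beta_k\tilde u_k^-\bigr), \qquad |\beta_k|=O(\mu^{-1}),
\]
with the induction step using the decomposition $A_{k+1}\tilde u_k^+ = s_{k+1}^+ a_k \tilde u_{k+1}^+ + s_{k+1}^- b_k \tilde u_{k+1}^-$ with $|b_k|\leq 1$, together with the contribution from $\beta_k A_{k+1}\tilde u_k^-$, which, relative to the main term, shifts the dominant coefficient by a factor $1+O(\mu^{-1})$ and perturbs the direction by $O(\mu^{-1})$. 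Taking norms and logs yields $\ln\norm{B_n e_0} = \sum_j \ln\norm{A_j} + \sum_j \ln|a_j| + O(n/\mu)$, providing the desired identity as a lower bound on $\norm{B_n}$. The matching upper bound $\norm{B_n}\leq \norm{B_n e_0}(1+O(\mu^{-1}))$ follows from $|\det B_n|=1$ combined with $\mu>n$: an analogous tracking of $B_n u_1^-$, together with the determinant constraint, forces the image of the orthogonal complement to be small relative to $\norm{B_n e_0}$, so $\norm{B_n}^2\leq \norm{B_n u_1^+}^2+\norm{B_n u_1^-}^2$ with the second summand negligible. Substituting into the local identity then gives the theorem.

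The main obstacle will be error propagation in the inductive tracking: the alignment $|a_k|$ can be as small as $\mu^{-1/2}$, so a perturbation $\beta_k$ in the $\tilde u_k^-$ direction of $B_k e_0$ threatens, upon multiplication by $A_{k+1}$, to be amplified by the ratio $s_{k+1}^+/|a_k|\sim \mu^{3/2}$ relative to the main prefactor. The saving mechanism is that the off-diagonal coefficients of $A_{k+1}$ in the chosen bases always appear multiplied by the contracting singular value $s_{k+1}^-\leq \mu^{-1}$, so the effective amplification per step is actually $s_{k+1}^-/(s_{k+1}^+|a_k|^2)\leq \mu^{-1}$; this keeps the accumulated relative error over $n<\mu$ steps bounded by $O(n/\mu)$, exactly matching the claimed bound.
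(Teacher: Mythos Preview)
The paper does not prove this theorem; it simply refers the reader to Goldstein--Schlag for the argument. Your outline is essentially the classical Goldstein--Schlag proof via SVD and forward vector tracking, and the local identity together with the inductive control of $\beta_k$ is correct (in fact your per–step errors are slightly pessimistic: the off-diagonal entries of $A_{j+1}A_j$ in the chosen bases are $O(s_{j+1}^+s_j^+\mu^{-2})$, so the local error is $O(\mu^{-4}|a_j|^{-2})=O(\mu^{-3})$, and $|\beta_{k+1}|=O(\mu^{-3/2})$ regardless of $\beta_k$).

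There is one genuine gap. The upper bound $\norm{B_n}\leq\norm{B_n u_1^+}(1+O(\mu^{-1}))$ does not follow from ``an analogous tracking of $B_n u_1^-$'' plus the determinant. After one step $B_1u_1^-=s_1^-\tilde u_1^-$ sits in the contracting direction; thereafter the crude bound $\norm{B_{k+1}u_1^-}\leq s_{k+1}^+\norm{B_ku_1^-}$ together with $\norm{B_{k+1}u_1^+}\gtrsim s_{k+1}^+|a_k|\norm{B_ku_1^+}$ only gives a ratio that grows by $|a_k|^{-1}\leq\mu^{1/2}$ per step, which is useless for $n$ comparable to $\mu$. The determinant constraint $|q-\beta_np|=1/|\lambda_n|$ (writing $B_nu_1^-=p\,\tilde u_n^++q\,\tilde u_n^-$) leaves $p$ completely unconstrained. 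The standard fix is to run your forward tracking on the adjoint product $B_n^*=A_1^*\cdots A_n^*$ starting from $\tilde u_n^+$: the alignment coefficients for consecutive pairs $A_j^*A_{j+1}^*=(A_{j+1}A_j)^*$ have the same moduli $|a_j|$, so the same induction gives $B_n^*\tilde u_n^+=\mu_n(u_1^++\gamma_nu_1^-)$ with $|\gamma_n|=O(\mu^{-3/2})$. Then $p=\langle\tilde u_n^+,B_nu_1^-\rangle=\langle B_n^*\tilde u_n^+,u_1^-\rangle=O(|v|\mu^{-3/2})$, and now the determinant pins $|q|=O(|v|^{-1})$, whence $\norm{B_nu_1^-}^2=O(|v|^2\mu^{-3})+O(|v|^{-2})$ and the Frobenius identity yields $\ln\norm{B_n}=\ln|v|+O(\mu^{-2})$. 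With this adjustment your argument goes through.

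A minor remark on your last paragraph: the threatened amplification of $\beta_k$ into the dominant coefficient is $\beta_kc_k/a_k$, not $\beta_ks_{k+1}^+/|a_k|$, since $s_{k+1}^+$ multiplies both the main term and the perturbation; with $|\beta_k|=O(\mu^{-3/2})$ this is already $O(\mu^{-1})$ without invoking the $s_{k+1}^-$ mechanism you describe.
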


The $C$ above is an absolute constant. We include this result for completeness, but we will actually use a slight variation of this result which is due to Bourgain \cite{BourgainContinuity}.

\begin{mythm}[Avalanche Principle Variation] \label{Thm:AvalanchePrincipleVar}
Suppose $A_1,...,A_n \in SL_2(\C)$ are such that
\begin{equation}
\mu < \norm{A_j} < \mu^C
\end{equation} 
for all $1 \leq j \leq n$ and some $\mu$ sufficiently large. Moreover, suppose
\begin{equation}
\max_{1 \leq j < n}\left|\ln\norm{A_j} + \norm{A_{j + 1}} - \ln\norm{A_{j + 1}A_j}\right| \leq \frac12 \ln \mu.
\end{equation}
Then
\begin{equation}
\left|\ln \norm{A_n \cdot A_1} + \sum_{j = 1}^{n} \ln\norm{A_j} - \sum_{j = 1}^{n - 1} \ln\norm{A_{j + 1}A_j}\right| < \frac{n}{\mu^{1/3}} + 4C \ln\mu.
\end{equation}
\end{mythm}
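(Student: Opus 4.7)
The strategy is to reduce to the original Avalanche Principle (Theorem \ref{Thm:AvalanchePrinciple}) by a two-scale grouping argument that bypasses the constraint $\mu > n$ from the original statement. If $n \leq \mu$, Theorem \ref{Thm:AvalanchePrinciple} applies directly, giving error $Cn/\mu \leq C$, and converting the sum $\sum_{j=2}^{n-1}$ to $\sum_{j=1}^{n}$ costs an additional $\ln\norm{A_1} + \ln\norm{A_n} \leq 2C\ln\mu$, all of which fits inside $n/\mu^{1/3} + 4C\ln\mu$ for $\mu$ sufficiently large.

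For the main case $n > \mu$, I partition $\set{1,\dots,n}$ into $m \approx n/k$ consecutive blocks $I_1,\dots,I_m$ of size $k := \lfloor \mu^{1/3}\rfloor$ and set $B_i := \prod_{j\in I_i,\,\text{rev}} A_j$, so that $A_n\cdots A_1 = B_m\cdots B_1$. Since each block (and each combined block $B_{i+1}B_i$) involves $\leq 2k < \mu$ matrices, Theorem \ref{Thm:AvalanchePrinciple} applies within each, yielding
\[
\ln\norm{B_i} = \sum_{j\in I_i,\, j<\max I_i}\ln\norm{A_{j+1}A_j} - \sum_{j\in\operatorname{int}(I_i)}\ln\norm{A_j} + O(k/\mu),
\]
and an analogous identity for $\ln\norm{B_{i+1}B_i}$. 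Substituting the gluing bound $\ln\norm{A_{j+1}A_j} \geq \ln\norm{A_j} + \ln\norm{A_{j+1}} - \tfrac12\ln\mu$ into the single-block identity telescopes to $\ln\norm{B_i} \geq \tfrac{k}{2}\ln\mu$, giving block-level hyperbolicity $\norm{B_i} \geq M$ with $M := \mu^{k/2}$. Crucially, this lower bound is independent of the upper constant $C$, because each $\ln\norm{A_j}$ picked up in the expansion of $\ln\norm{A_{j+1}A_j}$ is offset by one subtracted from $\sum\ln\norm{A_j}$. Subtracting the single-block from the combined-block identity at the junction $j^* := \max I_i$ leaves only the three terms $\ln\norm{A_{j^*}} + \ln\norm{A_{j^*+1}} - \ln\norm{A_{j^*+1}A_{j^*}}$ (up to $O(k/\mu)$), which by the original gluing hypothesis is bounded by $\tfrac12\ln\mu \leq \tfrac12\ln M$, verifying the block-level gluing condition.

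Since $M = \mu^{k/2}$ is superpolynomial in $\mu$ while $m \leq n$, the hypothesis $M > m$ is automatic for $\mu$ large. A second application of Theorem \ref{Thm:AvalanchePrinciple}, this time at the block scale, gives
\[
\ln\norm{A_n\cdots A_1} + \sum_{i=2}^{m-1}\ln\norm{B_i} - \sum_{i=1}^{m-1}\ln\norm{B_{i+1}B_i} = O(m/M).
\]
Substituting the sub-AP formulas back in and doing the bookkeeping, each $\ln\norm{A_{j+1}A_j}$ with $1\leq j\leq n-1$ emerges with net coefficient $-1$ (the combined-block contributions cover each junction exactly once; at $j$ interior to some $I_i$ the two combined contributions and one single-block contribution combine to $-2+1 = -1$), and each $\ln\norm{A_j}$ with $2\leq j\leq n-1$ emerges with net coefficient $+1$ by a parallel count. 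The block-level identity then telescopes into the original-level identity with total error $O(mk/\mu) + O(m/M) = O(n/\mu)$. Adding $\ln\norm{A_1} + \ln\norm{A_n} \leq 2C\ln\mu$ to convert the sum to $\sum_{j=1}^{n}$, and using $n/\mu \leq n/\mu^{1/3}$, yields the claimed bound.

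The main obstacle is the combinatorial bookkeeping in the final telescoping: one must track how each original-level quantity is counted across the single-block ($P_i$), combined-block ($Q_i$), and shifted combined-block ($Q_{i-1}$) contributions, and verify that these collapse to exactly the coefficients $\mp 1$ with no residual boundary terms beyond $\ln\norm{A_1} + \ln\norm{A_n}$. The block-level hyperbolicity $\norm{B_i} \geq \mu^{k/2}$ is also a somewhat subtle point, since the large upper bound $\mu^C$ on the $\norm{A_j}$ would destroy naive submultiplicative lower bounds; it is the precise algebraic structure of the sub-AP identity that makes the lower bound $C$-independent.
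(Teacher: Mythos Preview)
The paper does not give its own proof of this statement; it cites \cite{BourgainContinuity}. Your two-scale grouping strategy is the right idea and most of the individual steps (the sub-AP identities for $\ln\norm{B_i}$ and $\ln\norm{B_{i+1}B_i}$, the $C$-independent lower bound $\norm{B_i}\geq\mu^{k/2}$, the telescoping that reduces the block-level gluing defect to the single junction term) are correct and well observed.

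There is, however, a genuine gap at the point where you apply Theorem \ref{Thm:AvalanchePrinciple} at the block level. You assert that ``since $M=\mu^{k/2}$ is superpolynomial in $\mu$ while $m\leq n$, the hypothesis $M>m$ is automatic for $\mu$ large.'' This is false: the statement must hold for \emph{arbitrary} $n$ once $\mu$ exceeds some absolute threshold (and the paper uses it that way in Theorem \ref{Thm:AvalanchePrincipleConsequence}, where $n=N_1/N_0$ is not bounded in terms of $\mu$). For fixed $\mu$ the quantity $M=\mu^{\lfloor\mu^{1/3}\rfloor/2}$ is a fixed number, whereas $m\approx n/\mu^{1/3}$ can be made as large as one likes by taking $n$ large. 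So the block-level application of the original AP is not justified after a single regrouping.

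The standard remedy (and what Bourgain does) is to iterate the grouping: once you have blocks $B_i^{(1)}$ with $\norm{B_i^{(1)}}\geq M_1=\mu^{k/2}$ and the block-level gluing hypothesis, you are back in the hypotheses of the theorem you are proving, with $\mu$ replaced by $M_1$ and $n$ replaced by $m_1$. Repeat with block size $k_1=\lfloor M_1^{1/3}\rfloor$ to get $M_2=M_1^{k_1/2}$, and so on. Because $M_s$ grows doubly exponentially while $m_s$ only shrinks by a factor $M_{s-1}^{1/3}$ at each step, finitely many iterations suffice to reach $M_s>m_s$, at which point the original AP applies directly. The telescoping bookkeeping you describe has to be carried through all levels, and one must check that the accumulated errors $\sum_s m_s k_s/M_{s-1}$ stay bounded by $O(n/\mu)$; this works because the geometric decay in $1/M_s$ dominates. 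Your proposal is on the right track but stops one idea short of a complete argument.
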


We refer readers to \cite{GoldsteinSchlagAnnals} for the proof of Theorem \ref{Thm:AvalanchePrinciple} and to \cite{BourgainContinuity} for the proof of Theorem \ref{Thm:AvalanchePrincipleVar}. Both of these references prove the Avalanche Principle for $SL(2,\R)$ matrices, but the arguments clearly apply to $SL(2,\C)$ matrices.

The Avalanche Principle allows us to relate Lyapunov exponents at some initial scale to Lyapunov exponents at a larger scale via the following.

\begin{mythm}\label{Thm:AvalanchePrincipleConsequence}
Fix an analytic cocycle $(A,\omega).$ Fix $x \in \T^d$ and let $\delta > 0$ be a fixed constant. Let $N_0 > 0$ be sufficiently large (depending only on $\delta$ and measurements of $A(x)$). Take $N_1 \in \Z, N_1 \geq N_0, N_0 | N_1.$ Assume, moreover, that
\begin{align}
L_{N_0}(x) &> \delta\label{eq:169}\\
|L_{N_0}(x) - L_{2N_0}(x)| &< \frac 1 {100} L_{N_0}(x)\label{eq:170}\\
|L_N(x) - L_N(x + jN_0\omega)| &< \frac{\delta}{100}\label{eq:171}
\end{align}
for $N = N_0, 2N_0,$ and $j \leq \frac{N_1}{N_0}.$ Then
\begin{align}\label{eq:FirstAPConcl}\begin{split}
\left|L_{N_1}(x) + \frac 1 n \sum_{j = 0}^{n-1} L_{N_0}(x + jN_0\omega) - \frac 2 n \sum_{j = 0}^{n - 2} L_{2N_0}(x + jN_0\omega)\right|\\
 < \exp\set{-\frac{N_0}{4}L_{N_0}(x)} + C|L_{N_0}(x)|\frac{N_0}{N_1},
\end{split}\end{align}
where $n = N_1/N_0.$ Moreover,
\begin{equation}\label{eq:SecondAPConcl}
\left|L_{N_1}(x) + L_{N_0}(x) - 2L_{2N_0}(x)\right| < \frac{\delta}{20} + C |L_{N_0}(x)|\frac{N_0}{N_1}.
\end{equation}
The $C$ here is an absolute constant.
\end{mythm}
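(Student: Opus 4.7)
The plan is to apply the Avalanche Principle Variation (Theorem \ref{Thm:AvalanchePrincipleVar}) to the blocks
\[ M_j := \tilde{A}_{N_0}(x + jN_0 \omega), \quad j = 0, 1, \ldots, n-1, \]
where $n = N_1/N_0$ and $\tilde{A} := A/|\det A|^{1/2}$ is the renormalization satisfying $|\det \tilde{A}| = 1$. The cocycle property gives $M_{n-1}\cdots M_0 = \tilde{A}_{N_1}(x)$ and $M_{j+1}M_j = \tilde{A}_{2N_0}(x + jN_0\omega)$, so that $\ln\norm{M_j} = N_0 L_{N_0}(x + jN_0\omega)$ and $\ln\norm{M_{j+1}M_j} = 2N_0 L_{2N_0}(x + jN_0\omega)$. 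Each $M_j$ satisfies only $|\det M_j|=1$ rather than $\det M_j = 1$, but multiplying by a unit-modulus scalar (a square-root branch of $1/\det M_j$) places it in $SL(2,\C)$ without changing any operator norm, so Theorem \ref{Thm:AvalanchePrincipleVar} applies unchanged.

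The next step is to verify the two AP hypotheses. Setting $\mu := \exp(N_0(L_{N_0}(x) - \delta/100))$, hypothesis \eqref{eq:171} yields $\mu \leq \norm{M_j} \leq \mu^C$ for an absolute $C$ (using $\delta < L_{N_0}(x)$), while \eqref{eq:169} together with $N_0$ large makes $\mu$ as large as needed. For the closeness hypothesis, three applications of \eqref{eq:171} reduce
\[ \bigl|\ln\norm{M_j} + \ln\norm{M_{j+1}} - \ln\norm{M_{j+1}M_j}\bigr| \]
to $2N_0\,|L_{N_0}(x) - L_{2N_0}(x)| + O(N_0 \delta)$, which by \eqref{eq:170} (and $\delta < L_{N_0}(x)$) is a small constant multiple of $N_0 L_{N_0}(x)$, comfortably below $\tfrac{1}{2}\ln\mu \geq \tfrac{1}{4} N_0 L_{N_0}(x)$.

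With the hypotheses in hand, Theorem \ref{Thm:AvalanchePrincipleVar} gives
\[ \Bigl|\ln\norm{\tilde{A}_{N_1}(x)} + \sum_{j=0}^{n-1}\ln\norm{M_j} - \sum_{j=0}^{n-2}\ln\norm{M_{j+1}M_j}\Bigr| < \frac{n}{\mu^{1/3}} + 4C\ln\mu. \]
Dividing by $N_1 = nN_0$ and substituting the Lyapunov identifications above produces \eqref{eq:FirstAPConcl}: the bound $\mu^{1/3} \geq \exp(N_0 L_{N_0}(x)/4)$ converts the first error term into $\exp(-N_0 L_{N_0}(x)/4)$, and $\ln\mu \leq N_0 L_{N_0}(x)$ converts the second into $C|L_{N_0}(x)|N_0/N_1$. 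For \eqref{eq:SecondAPConcl} I will then replace each average in \eqref{eq:FirstAPConcl} by its central value at $x$ via \eqref{eq:171} (cost at most $\delta/100$ for the $L_{N_0}$-average and $2\delta/100$ for the $L_{2N_0}$-average, totalling at most $\delta/20$), and absorb the $O(L_{2N_0}(x) N_0/N_1)$ discrepancy between averaging over $n-1$ versus $n$ terms into the existing $C|L_{N_0}(x)|N_0/N_1$ term, using that $L_{2N_0}(x)$ and $L_{N_0}(x)$ are comparable by \eqref{eq:170}.

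The main obstacle I anticipate is purely bookkeeping: one must exploit that \eqref{eq:170} is a \emph{relative} closeness bound so that the AP closeness error stays a fixed fraction of $\ln\mu$ regardless of the size of $L_{N_0}(x)$, and must carefully track the multiplicative constants to land on the stated $\delta/20$ and $C|L_{N_0}(x)|N_0/N_1$. The $M(2,\C)$ nature of the cocycle poses no real difficulty here because the renormalization $\tilde{A}$ has $|\det|=1$ everywhere relevant and all growth is measured by operator norms, which are invariant under the unit-scalar rescaling needed to enter $SL(2,\C)$.
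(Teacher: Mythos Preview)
Your proposal is correct and follows essentially the same approach as the paper: define block matrices $M_j$ from the (renormalized) length-$N_0$ cocycle, verify the two hypotheses of the Avalanche Principle Variation via \eqref{eq:169}--\eqref{eq:171}, apply it, and divide by $N_1$. Your treatment is in fact slightly more careful than the paper's in one place: you explicitly pass to $\tilde A$ and note the unit-scalar rescaling needed to land in $SL(2,\C)$, whereas the paper simply declares $M_j\in SL(2,\C)$ without comment; and your choice $\mu=\exp(N_0(L_{N_0}(x)-\delta/100))$ versus the paper's $\mu=\exp(\tfrac{99}{100}N_0L_{N_0}(x))$ is an inessential variant.
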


We will provide a proof of this result for convenience, though a proof for Schr\"odinger cocycles may be found in \cite{BourgainContinuity}. A key difference between our presentation and the presentation in \cite{BourgainContinuity} is the inclusion of $L_{N_0}(x)$ in the right hand side of \eqref{eq:FirstAPConcl} and \eqref{eq:SecondAPConcl}. This is due to the absence of a uniform pointwise bound on $L_{N_0}(x)$ in the case of general cocycles. This will not pose a problem later, as we have Lemma \ref{Lem:UniformL2} to deal with integrals of the right-hand side.

\begin{proof}
Fix a cocycle $A.$ We will write $L_{N_0}(x)$ in place of $L_{N_0}(A,x).$ Define $M_j = A_{N_0}(x + j N_0\omega) \in SL(2,\C).$ Our goal is to apply Theorem \ref{Thm:AvalanchePrincipleVar} to $M_j.$ By \eqref{eq:171} and \eqref{eq:169}, we obtain
\begin{align}\begin{split}
\frac{99}{100} L_{N_0}(x) &\leq L_{N_0}(x) - \delta/100 \\
&\leq L_{N_0}(x + jN_0\omega) \\
&\leq L_{N_0}(x) + \delta/100 \\
&\leq \frac{101}{100} L_{N_0}(x).
\end{split}\end{align}
The definition of $L_{N_0}$ now yields, for $j \leq N_1/N_0,$ 
\begin{equation}
\frac{99}{100} N_0 L_{N_0}(x) < \ln\norm{M_j} < \frac{101}{100} N_0 L_{N_0}(x).
\end{equation}
Setting 
$$\mu = e^{\frac{99}{100}N_0 L_{N_0}(x)},$$
we have
\begin{equation}\label{eq:176} \mu < \norm{M_j} < \mu^{101/99} = \mu^C.\end{equation}
Moreover,
$$M_{j + 1} M_j = A_{2N_0}(x + jN_0\omega).$$
Thus, for $j < N_1/N_0,$
\begin{align}
\bigg| \ln&\norm{M_{j + 1}} + \ln\norm{M_j} - \ln\norm{M_{j + 1}M_j}\bigg| \\
&= N_0 \left|L_{N_0}(x + (j+1)N_0\omega) + L_{N_0}(x + jN_0\omega) - 2 L_{2N_0}(x + jN_0\omega)\right|.
\end{align}
Hence, by \eqref{eq:171} and triangle inequality,
\begin{align}
\bigg| \ln&\norm{M_{j + 1}} + \ln\norm{M_j} - \ln\norm{M_{j + 1}M_j}\bigg| \\
&\leq 2N_0 \left|L_{N_0}(x) + L_{2N_0}(x)\right| + \frac{N_0\delta}{25}\\
&\leq \frac{N_0\delta}{50} + \frac{N_0\delta}{25}\\
&= \frac{3N_0\delta}{50}\\
&< \frac{3N_0}{50} L_{N_0}(x)\\
&< \frac 1 {10} \ln\mu.
\end{align}
The last inequality follows from the definition of $\mu.$ This, along with \eqref{eq:176}, means Theorem \ref{Thm:AvalanchePrincipleVar} is applicable, and we obtain, with $N = \frac{N_1}{N_0} - 1,$
\begin{align}
\bigg| N_1 L_{N_1}(x) + &N_0\sum_{j = 0}^N L_{N_0}(x + j N_0\omega) - 2N_0 \sum_{j = 0}^{N - 1} L_{2N_0}(x + j N_0\omega)\bigg|\\
&< \frac{N_1}{N_0}\mu^{-1/3} + C \ln\mu\\
&= \frac{N_1}{N_0}\mu^{-1/3} + C N_0 L_{N_0}(x).
\end{align}
Dividing by $N_1,$ and using the definition of $N,$ we have
\begin{align}
\bigg| L_{N_1}(x) + &\frac 1 N (1 - N_0/N_1)\sum_{j = 0}^N L_{N_0}(x + j N_0\omega) \\
&- 2\frac 1 N (1 - N_0/N_1) \sum_{j = 0}^{N - 1} L_{2N_0}(x + j N_0\omega)\bigg|\\
&\quad = \frac{1}{N_0}\mu^{-1/3} + C \frac{N_0}{N_1} L_{N_0}(x).
\end{align}
Now we note that $$\frac 1 N (1 - N_0/N_1) = \frac{1}{N+1}$$ and $$N_0^{-1}\mu^{-1/3} < e^{-\frac 14 N_0 L_{N_0}(x)},$$ which together yield \eqref{eq:FirstAPConcl}. Combining this with \eqref{eq:171} and triangle inequality yields \eqref{eq:SecondAPConcl}.
\end{proof}





\section{Comparing Lyapunov exponents at different scales: Liouville frequencies} \label{Section:Liouv}


Throughout this section, we establish estimates of the form $|L_{N_0}(A) - L_{N_1}(A)| < C'\kappa.$ The constant $C'$ depends on measurements of $A$ and can thus be taken uniform for all $B$ with $\norm{A - B}$ sufficiently small. The uniformity of this constant will be essential to establishing continuity in Section \ref{Section:ContArg}. In what follows, the constant $C$ will be taken to be sufficiently large so that Theorem \ref{Thm:LDT} is applicable with $c = 1/C.$ 

Before considering the general case of arbitrary $\omega \in \T^d,$ we will consider the special case where $\omega$ satisfies a Liouville-type condition.

\begin{Remark}
We would like to note that we could just as easily have skipped the discussion in this section and simply proved Theorem \ref{Thm:Mixed}. We choose to present the following special case to illustrate the main ideas of the proof in a simplified setting.
\end{Remark}


\begin{mylemma}\label{Lem:BaseEstAP}
Fix $N_0 = a 2^bq_0,$ with $b\in \N$ large. Consider the set $F$ consisting of all $x\in \T^d$ such that $|L_N(x) - L_N(x + jq_0\omega)| < \kappa$ for all $N = 2^{-s}N_0,$ with $0 \leq s \leq -C_3\ln\kappa = s_0,$ and all $j \leq N_1/q_0$ such that $N_02^{-s}/q_0$ divides $j$ for some $0 \leq s \leq s_0.$ Here $C_3$ is a sufficiently large constant. Then 
$$\left|\int_F L_{N_0}(x) - L_{N_1}(x) dx\right| < C\kappa.$$
\end{mylemma}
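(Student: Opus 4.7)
The plan is to apply Theorem \ref{Thm:AvalanchePrincipleConsequence} (the Avalanche Principle consequence) at each dyadic scale $M_s := 2^{-s}N_0$ for $s = 1,\dots, s_0$, taking $M_s$ as the base scale and $N_1$ as the target scale, and then combine the resulting pointwise estimates via a weighted telescoping sum that cancels all intermediate $L_{M_s}$ and leaves only $L_{N_0} - L_{N_1}$. The key observation justifying the shift hypothesis \eqref{eq:171} for AP is that for $x \in F$ and any $s \in [1, s_0]$, a shift by $jM_s\omega$ with $j \le N_1/M_s$ corresponds to a shift by $kq_0\omega$ with $k = jM_s/q_0$ a multiple of $M_s/q_0$; these are exactly the shifts controlled in the definition of $F$, both for $L_{M_s}$ and for $L_{M_{s-1}} = L_{2M_s}$ (since divisibility by $M_s/q_0$ is among the divisibility conditions listed in the definition of $F$). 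Provided the remaining AP hypotheses $L_{M_s}(x) > \delta$ and $|L_{M_s}(x) - L_{M_{s-1}}(x)| < L_{M_s}(x)/100$ hold at $x$, Theorem \ref{Thm:AvalanchePrincipleConsequence} yields
\begin{equation*}
R_s(x) := L_{N_1}(x) + L_{M_s}(x) - 2L_{M_{s-1}}(x) = O\!\left(\kappa + e^{-M_s \delta/4} + \tfrac{M_s}{N_1}|L_{M_s}(x)|\right).
\end{equation*}

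To extract $L_{N_0} - L_{N_1}$, I take coefficients $c_s = -2^{-s}$. A direct computation gives
\begin{equation*}
\sum_{s=1}^{s_0} c_s R_s(x) = \bigl(L_{N_0}(x) - L_{N_1}(x)\bigr) - 2^{-s_0}\bigl(L_{N_1}(x) - L_{M_{s_0}}(x)\bigr),
\end{equation*}
since the choice $c_1 = -1/2$, $c_{s+1} = c_s/2$ forces $-2c_1 = 1$ (producing $L_{N_0}$), $c_s - 2c_{s+1} = 0$ (cancelling the intermediate $L_{M_s}$ for $1 \le s < s_0$), and $\sum c_s = -1 + 2^{-s_0}$ (producing $-L_{N_1}$ up to a small remainder). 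The remainder is $2^{-s_0} = \kappa^{C_3 \ln 2}$, which is much smaller than $\kappa$ for $C_3$ sufficiently large, and when paired with $L_{N_1}$ and $L_{M_{s_0}}$ it is controlled in $L^1(F)$ by Cauchy--Schwarz together with the uniform $L^2$ bound of Lemma \ref{Lem:UniformL2}. For the main term, $\sum |c_s| \le 1$, and integrating over $F$ gives $\sum |c_s| \int_F |R_s| \lesssim \kappa + N_0/N_1 \cdot \int_F |L_{M_s}|$, where the second contribution is absorbed into $O(\kappa)$ assuming $N_1$ is chosen sufficiently large relative to $N_0/\kappa$ (again using Lemma \ref{Lem:UniformL2}).

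The main obstacle is verifying the pointwise hypotheses \eqref{eq:169} and \eqref{eq:170} of Theorem \ref{Thm:AvalanchePrincipleConsequence}, since the definition of $F$ only encodes the shift control \eqref{eq:171}. I would handle this by restricting to the subset $F' \subset F$ where $L_{M_s}(x) > \delta$ and $|L_{M_s}(x) - L_{M_{s-1}}(x)| < L_{M_s}(x)/100$ hold for every $s$, run the telescoping argument as above on $F'$, and then bound the contribution of $F \setminus F'$ separately. On $F \setminus F'$ where $L_{M_s}(x) \le \delta$ for some $s$, submultiplicativity of $\|A_N(x)\|$ combined with the shift control in $F$ forces $L_N(x) \le \delta + \kappa$ for $N = N_0, N_1$, so $|L_{N_0}(x) - L_{N_1}(x)| \le 2(\delta + \kappa)$ on this set; choosing $\delta$ comparable to $\kappa$ makes the resulting contribution $O(\kappa)$. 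Failures of \eqref{eq:170} are handled by a similar truncation, controlling the exceptional set via the uniform $L^2$ bound of Lemma \ref{Lem:UniformL2} and H\"older's inequality to keep the integrated contribution within $O(\kappa)$.
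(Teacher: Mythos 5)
Your proposal has two genuine gaps, and both stem from applying the Avalanche Principle at \emph{every} dyadic scale rather than at a single carefully chosen scale.

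The paper's proof is built around a pigeonhole argument. For $x \in F\cap Z$ with $L_{N_0}(x) > 10^3\kappa$ (where $Z = \{L_{2^{-s_0}N_0}(x) < \kappa^{-1}\}$, a set whose complement has measure $O(\kappa)$ by Chebyshev), one shows that the dyadic sequence $L_{2^{-s}N_0}(x)$ is nearly monotone increasing in $s$ and is squeezed between $\sim\kappa$ and $\kappa^{-1}$; across $s_0 = -C_3\ln\kappa$ steps this forces the existence of at least \emph{one} scale $N_{00}(x)$ with both $N_{00} < \kappa N_0$ and $|L_{N_{00}}(x) - L_{2N_{00}}(x)| < L_{N_{00}}(x)/100$. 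The AP consequence is then applied only at this one scale. Your telescoping with weights $c_s = -2^{-s}$ requires the ratio hypothesis \eqref{eq:170} at \emph{all} scales $s = 1,\dots,s_0$, which is far stronger and simply false in general: between most consecutive dyadic scales the norm can grow by an arbitrary factor, and $x\in F$ gives no control over this. Your proposed fallback — restrict to the subset $F'$ where \eqref{eq:170} holds at all scales and bound $F\setminus F'$ — does not work, because $F\setminus F'$ is not small. The $L^2$ bound from Lemma \ref{Lem:UniformL2} controls where $L$ is \emph{large}, not where two consecutive finite-scale exponents are far apart as a ratio; there is no reason $F\setminus F'$ has small measure, and on it $|L_{N_0}(x)-L_{N_1}(x)|$ can be $O(1)$.

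Separately, the AP error term $\tfrac{M_s}{N_1}|L_{M_s}(x)|$ is not small for small $s$: at $s=1$ you get $\tfrac{N_0}{2N_1}$, which for $N_1 = 2N_0$ or even $N_1 = 100 N_0$ is nowhere near $O(\kappa)$. You note this and propose requiring $N_1 \gg N_0/\kappa$, but no such lower bound on $N_1$ appears in the lemma's hypotheses — it is used in Lemma \ref{Lem:Liouv} for all $N_1$ with $N_0 \mid N_1$. The paper sidesteps this precisely by working at the scale $N_{00}(x) < \kappa N_0 \le \kappa N_1$, so the ratio $N_{00}/N_1 < \kappa$ is automatic without any largeness assumption on $N_1$. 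In short: both the choice of a single $x$-dependent base scale and its smallness relative to $N_0$ are structurally essential, and the all-scales telescoping cannot substitute for either.
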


\begin{proof}
Consider $x \in F$ such that $L_{N_0}(x) > 10^3\kappa.$ Define $N_{0,1} = N_0/2.$ Then $N_0 = 2N_{0,1}.$  Since $x\in F,$ we have
\begin{align}
L_{N_0}(x) &= \frac 1 {N_0} \ln\norm{A_{N_0}(x,\omega)}\\
&= \frac{1}{2N_{0,1}} \ln\norm{A_{2N_{0,1}}(x,\omega)}\\
&\leq \frac{1}{2N_{0,1}} \left(\ln\norm{A_{N_{0,1}}(x,\omega)} + \ln\norm{A_{N_{0,1}}(x + N_{0,1},\omega)}\right)\\
&= \frac12 L_{N_{0,1}}(x) + \frac 12 L_{N_{0,1}}(x + \frac{N_{0,1}}{q_0}q_0\omega)\\
&\leq L_{N_{0,1}}(x) + \kappa.
\end{align} 
The last line follows from the definition of $F.$ Thus 
$$L_{N_0} \leq L_{N_0/2}(x) + \kappa.$$

We can obtain a similar estimate using $N_{0,s} = N_0/2^s$ instead:
$$L_{N_02^{-s}}(x) \leq L_{N_02^{-s'}}(x) + \kappa$$
for any $s \leq s' \leq s_0.$

Since 
$L_{N_0}(x) > 10^3 \kappa,$ we have
\begin{equation}
999\kappa < L_{N_02^{-s}}(x).
\end{equation}

Thus, for $0 \leq s \leq s_0,$ the sequence $999\kappa < L_{2^{-s}N_0}(x)$ is increasing, up to modification by $O(\kappa).$

At this point, we further restrict our allowable $x$ to a suitable set $$Z = \set{x\in \T^d: L_{2^{-s_0}N_0}(x) < \kappa^{-1}}.$$ By Chebyschev's inequality, $|\T^d \backslash Z| < C(A) \kappa,$ so 
$$\left|\int_{\T^d \backslash Z} L_{N_1}(x) - L_{N_0}(x) dx\right| < C \kappa.$$
For $x\in F \cap Z,$ we define $N_{00} = N_{00}(x) = \frac{N_0}{2^{s(x)}},$ where $0 \leq s(x) \leq s_0$ is chosen so that $N_{00} < \kappa N_0,$ (which is possible because $s_0 = -C_3\ln\kappa > -\ln\kappa),$ and
$$\frac{99}{100}L_{N_{00}}(x) < L_{2N_{00}}(x) < L_{N_{00}}(x) + 10\kappa.$$
The right inequality is true because $L_{N_02^{-s}}(x)$ is increasing up to modification by $O(\kappa),$ and the left inequality is true by taking $C_3$ large. Indeed, if the left inequality fails for all choices of $N_{00}(x),$ then we have, for $x\in F\cap Z,$
$$\left(\frac{99}{100}\right)^{-C_3\ln\kappa}\kappa^{-1} > \left(\frac{99}{100}\right)^{-C_3\ln\kappa} L_{2^{-s_0}N_0}(x) > L_0(x) > 10^3\kappa.$$
Taking a logarithm of the leftmost and rightmost terms, we quickly see that this is impossible for large $C_3.$ (Say $C_3 > 10$).

Hence
\begin{equation}
|L_{N_{00}}(x) - L_{2N_{00}}(x)| < \frac{1}{100} L_{N_{00}}(x).
\end{equation}
Now note that $x\in F$ and $N_{00}, 2N_{00}$ are of the form of the length scales included in the definition of $F,$ so for $N = N_{00}$ and $2N_{00},$ and for all $j = n \frac{N_0}{2^sq_0}, j \leq N_1/q_0, 0 \leq s \leq s_0,$ we have
\begin{equation}
\left|L_N(x + jq_0\omega) - L_N(x)\right| < \kappa.
\end{equation}
In particular, if we take $j = n = N_{00}q_0^{-1}, n \leq N_1/N_{00},$ we have
\begin{equation}
\left|L_N(x + nN_{00}\omega) - L_N(x)\right| < \kappa.
\end{equation}
Thus Theorem \ref{Thm:AvalanchePrincipleConsequence} is applicable, and we obtain
\begin{align}
\left|L_{N_1}(x) + L_{N_{00}}(x) - 2L_{2N_{00}}(x)\right| &< O(\kappa) + C L_{N_{00}}(x) \frac{N_{00}}{N_1}\label{eq:APApplication}\\
\left|L_{N_0}(x) + L_{N_{00}}(x) - 2L_{2N_{00}}(x)\right| &< O(\kappa) + C L_{N_{00}}(x) \frac{N_{00}}{N_0}.
\end{align}
Since $x \in F,$ we have $L_{N_{00}}(x) \leq L_{2^{-s_0}N_0}(x) + \kappa.$ Moreover, by construction, $\frac{N_{00}}{N_1}, \frac{N_{00}}{N_0} < \kappa.$ Thus, for $x \in F$ such that $L_{N_0}(x) > 10^3\kappa,$
\begin{equation}
\left|L_{N_0}(x) - L_{N_1}(x)\right| < (C + L_{2^{-s_0}N_{0}}(x)) \kappa \label{eq:96}.
\end{equation}
Moreover, we know $\int_{\T^d} |L_{2^{-s_0}N_0}(x)| dx < C(A),$ so the integral of the above is bounded by $C(A)\kappa.$ 
Finally, for $x \in F$ such that $L_{N_0}(x) < 10^3 \kappa,$ we have
$$L_{N_1}(x) \leq L_{N_0}(x) < 10^3 \kappa.$$
Thus
\begin{equation}
\left|\int_F L_{N_1}(x) - L_{N_0}(x) dx\right| < C(A) \kappa.
\end{equation}

\end{proof}


\begin{mylemma}[Liouville Frequencies]\label{Lem:Liouv}



Let $\kappa > 0$ be small, and let $N_0, q_0 \in \N$ be such that 
\begin{equation}
\norm{q_0\omega} = \sum\norm{q_0\omega_j} < \kappa^C\rho^4\frac{q_0}{N_0} \label{eq:LiouvCondition}
\end{equation}
and
\begin{equation}
N_0\kappa^C > q_0.\label{eq:N0LargeCondition}
\end{equation}
Then
\begin{equation}
\left|L_{N_1} - L_{N_0}\right| < C' \kappa^{1/2}
\end{equation}
for all $N_1$ such that $N_0 | N_1$ and 
\begin{equation}
N_1 < \kappa^{C/2}\rho^2 \sqrt{\frac{N_0q_0}{\norm{q_0\omega}}}.\label{eq:N1SmallCondition}
\end{equation}
Here $C = C(v,E)$ and can be taken uniform in $E$ whenever $E$ is restricted to a bounded set.
\end{mylemma}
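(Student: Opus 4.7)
The plan is to reduce to Lemma \ref{Lem:BaseEstAP}, which already provides a bound on the integral of $L_{N_0} - L_{N_1}$ over the ``good set'' $F$. The remaining task is to show that $F$ is almost all of $\T^d$, and that the integral over its complement contributes at most $O(\kappa^{1/2})$. The Liouville condition \eqref{eq:LiouvCondition} makes $j q_0 \omega$ very close to zero on $\T^d$ for all relevant $j$, so Lemma \ref{Lem:CDT} gives strong estimates at every required shift, and the complement of $F$ is controlled by a union bound.

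Concretely: for each admissible pair $(N,j)$ in the definition of $F$, where $N = N_0 2^{-s}$ with $0 \leq s \leq s_0 = -C_3 \ln\kappa$ and $j \leq N_1/q_0$ is a positive multiple of $N_0 2^{-s}/q_0$, set $a := jq_0\omega$, so that $\|a\|_{\T^d} \leq j\|q_0\omega\|$. Lemma \ref{Lem:CDT} applied at threshold $\kappa$ yields
\begin{equation*}
\left|\{x \in \T^d : |L_N(B,x) - L_N(B,x + a)| > \kappa\}\right| \leq C\kappa^{-3} j\|q_0\omega\|.
\end{equation*}
For fixed $s$, summing $j$ over the at most $N_1 2^s/N_0$ admissible multiples gives $\sum_j j \lesssim N_1^2 2^s/(N_0 q_0)$. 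Summing over $0 \leq s \leq s_0$ and taking a union bound,
\begin{equation*}
|\T^d \setminus F| \leq C\kappa^{-C_3 - 3} \, \frac{N_1^2 \|q_0\omega\|}{N_0 q_0}.
\end{equation*}
By hypothesis \eqref{eq:N1SmallCondition}, $N_1^2 \|q_0\omega\|/(N_0 q_0) \leq \kappa^{C}\rho^4$, so taking the constant $C$ sufficiently large compared to $C_3$ yields $|\T^d \setminus F| \leq \kappa$. (Hypothesis \eqref{eq:N0LargeCondition} is used here to guarantee that $N_0$ is large enough for the integer scales $N_0 2^{-s}/q_0$ appearing in $F$ to be meaningful up to $s = s_0$.)

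Now I split
\begin{equation*}
L_{N_1} - L_{N_0} = \int_F (L_{N_1}(x) - L_{N_0}(x))\,dx + \int_{\T^d \setminus F} (L_{N_1}(x) - L_{N_0}(x))\,dx.
\end{equation*}
Lemma \ref{Lem:BaseEstAP} bounds the first term by $C\kappa$. For the second, Lemma \ref{Lem:UniformL2} and the triangle inequality give $\|L_{N_0} - L_{N_1}\|_{L^2(\T^d)} \leq 2C$, so Cauchy--Schwarz yields
\begin{equation*}
\left| \int_{\T^d \setminus F} (L_{N_1} - L_{N_0})\,dx \right| \leq |\T^d \setminus F|^{1/2} \, \|L_{N_0} - L_{N_1}\|_{L^2} \leq C\kappa^{1/2}.
\end{equation*}
Since $C\kappa \leq C\kappa^{1/2}$ for $\kappa$ small, adding the two estimates gives $|L_{N_1} - L_{N_0}| \leq C'\kappa^{1/2}$, as required.

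The main obstacle is careful parameter bookkeeping: the constant $C$ in \eqref{eq:LiouvCondition}, \eqref{eq:N0LargeCondition}, and \eqref{eq:N1SmallCondition} must be chosen large enough to absorb both the $\kappa^{-C_3}$ arising from counting admissible pairs and the $\kappa^{-3}$ from Lemma \ref{Lem:CDT}, while simultaneously letting Lemma \ref{Lem:BaseEstAP} apply. The $\kappa^{1/2}$ loss (versus the $\kappa$ obtained in the $SL(2,\C)$ case) is intrinsic to our setting and arises precisely from the Cauchy--Schwarz step, in which we use a uniform $L^2$ bound rather than a uniform pointwise bound on $L_N$, consistent with the Remark preceding this section.
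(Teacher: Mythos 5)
Your measure estimate for $\T^d\setminus F$ via Lemma \ref{Lem:CDT}, the appeal to Lemma \ref{Lem:BaseEstAP} on $F$, and the Cauchy--Schwarz/Lemma \ref{Lem:UniformL2} step on the complement all match the paper's argument for the \emph{special} case. However, Lemma \ref{Lem:BaseEstAP} is stated only for $N_0$ of the dyadic form $N_0 = a 2^b q_0$ with $b$ large, and indeed the set $F$ (with scales $2^{-n}N_0$ for $0\le n\le s_0$ and shift increments $N_0 2^{-s}/q_0$) is only well defined when $2^{s_0}q_0 \mid N_0$. Your parenthetical claim that hypothesis \eqref{eq:N0LargeCondition} ``guarantees that $N_0$ is large enough for the integer scales $N_0 2^{-s}/q_0$ to be meaningful'' is not right: $N_0\kappa^C > q_0$ controls the \emph{size} of $N_0/q_0$, not its divisibility by $2^{s_0}q_0$. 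So your proof only establishes the lemma for $N_0$ of the special dyadic form, whereas the statement is for arbitrary $N_0\in\N$ satisfying \eqref{eq:LiouvCondition}--\eqref{eq:N0LargeCondition}.

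This is a genuine gap: roughly half of the paper's proof is devoted to removing the dyadic restriction. The paper picks an auxiliary scale $N_0' = 2^b q_0 \le \sqrt{N_0}$ of the good form, writes $N_0 = \alpha N_0' + O(N_0')$, runs the dyadic argument with $\alpha N_0'$ as base scale, and then has to separately estimate $|L_{N_0}(x) - L_{\alpha N_0'}(x)|$ (and likewise $|L_{N_1}(x) - L_{\beta N_0'}(x)|$) by introducing an additional excised set $G$ where $\norm{\tilde A_{O(N_0')}(x+\alpha N_0'\omega)}$ is large, using Chebyshev via Lemma \ref{Lem:UniformL2}, and combining with \eqref{eq:96}. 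None of this appears in your proposal, and it cannot be waved away by a constant choice or by hypothesis \eqref{eq:N0LargeCondition}; you would need to add this reduction to complete the argument.
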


\begin{Remark}
The following proof is essential to our overall argument, and elements of it will be used again to prove Theorem \ref{Thm:Mixed}. In particular, the introduction of and restriction to the set $F$ is essential. Once we restrict to $F,$ the following argument goes through without issues. The key difficulty is showing that $\T^d\backslash F$ has small measure. This is accomplished here using Lemma \ref{Lem:CDT}. Later it will be accomplished using more involved estimates.
\end{Remark}

\begin{proof}
We will first prove this result for $N_0$ such that 
\begin{equation}
N_0 = a 2^b q_0,\quad a,b\in \N, b > -\frac C2\ln\kappa,
\end{equation}
for some $C$ sufficiently large (this is the same $C$ which appears in the statement of the lemma). Note that this assumption on $b$ allows for $N_0$ slightly smaller than those imposed by the condition \eqref{eq:N0LargeCondition}. Then we will derive the general result for general $N_0$ satisfying \eqref{eq:N0LargeCondition}.

Fix $\kappa$ small. 
Consider the set of $x\in\T^d$ such that 
$$|L_N(x + jq_0\omega) - L_N(x)| < \kappa$$
for all $2^{-s_0}N_0 \leq N \leq N_0,$ where $s_0 = - C_3\ln\kappa,$ with
$C_3 > 4$ 
and set $C >0$ such that $C - C_3 > 4,$ $N$ is of the form $2^{-s}N_0,$ and $j \leq N_1/q_0$ is such that $N_0/2^sq_0$ divides $j$ for some $s \leq s_0.$ In other words, we want to consider the set $F$ defined as follows. Define
\begin{equation}
F_j^N = \set{x \in \T^d: |L_N(x + jq_0 \omega) - L_N(x)| < \kappa}
\end{equation}
and set 
\begin{equation}
F = \bigcap_{n = 0}^{s_0} \bigcap_{s = 0}^{s_0} \bigcap_{m = 1}^{N_12^s/N_0} F_{mN_0/2^sq_0}^{2^{-n}N_0}
\end{equation}
We have
$$L_{N_0} - L_{N_1} = \int_F (L_{N_0}(x) - L_{N_1}(x)) dx + \int_{\T^d\backslash F} (L_{N_0}(x) - L_{N_1}(x)) dx.$$
Observe that Lemma \ref{Lem:CDT} implies
$$\left|\T^d \backslash F_j^N\right| \leq C \kappa^{-3}\rho^{-3} |j|\norm{q_0\omega}.$$
Thus, setting $\alpha(s) = N_1 2^s/N_0,$ we have
\begin{align}
|\T^d \backslash F| &\leq \sum_{n = 0}^{s_0}\sum_{s = 0}^{s_0}\sum_{m = 1}^{\alpha(s)} C \kappa^{-3}\rho^{-3}\norm{q_0\omega} mN_0/2^sq_0\\
&\sim \sum_{n = 0}^{s_0}\sum_{s = 0}^{s_0}\kappa^{-3}\rho^{-3}\norm{q_0\omega}\frac{N_1^2}{N_0q_0} 2^{s}\\
&\sim \sum_{n = 0}^{s_0} \kappa^{-3}\rho^{-3} \norm{q_0\omega} \frac{N_1^2}{N_0q_0} 2^{s_0}\\
&= \sum_{n = 0}^{s_0} \kappa^{-3}\rho^{-3} \norm{q_0\omega} \frac{N_1^2}{N_0q_0} \kappa^{-C_3}\\
&\sim s_0 \kappa^{-3 - C_3}\rho^{-3} \norm{q_0\omega} \frac{N_1^2}{N_0q_0}\\
&\leq \kappa^{-3-C_3}\rho^{-3} \norm{q_0\omega} \frac{N_1^2}{N_0q_0}.
\end{align}
Thus, the set of $x$ excluded from $F$ has measure at most
$$\kappa^{-3-C_3}\rho^{-3} \norm{q_0\omega} \frac{N_1^2}{N_0q_0} < \kappa,$$
and we have
$$\left|\int_{\T^d\backslash F} (L_{N_0}(x) - L_{N_1}(x)) dx\right| \leq C\kappa^{1/2}$$ by Lemma \ref{Lem:UniformL2}.

It thus suffices to understand $\int_F (L_{N_0}(x) - L_{N_1}(x)) dx.$ 

Since the set $F$ we consider here satisfies the conditions of the set from Lemma \ref{Lem:BaseEstAP}, we conclude that 
$$\left|\int_F (L_{N_0}(x) - L_{N_1}(x)) dx\right| < C\kappa.$$

Our conclusion now follows, assuming 
\begin{equation}
N_0 = a 2^b q_0,\quad a,b\in \N, b > -C\ln\kappa.
\end{equation}


Now consider arbitrary $N_0$ such that \eqref{eq:LiouvCondition} and \eqref{eq:N0LargeCondition} hold. Let $N_0' = 2^b q_0$ be of the form considered above such that $N_0' \leq \sqrt{N_0}.$ 
Moreover, let $\alpha \in \N$ be such that $\alpha N_0' \leq N_0 \leq (\alpha + 1)N_0'.$ In particular, $N_0 = \alpha N_0' + O(N_0').$ We may now run the above argument using $\alpha N_0'$ as our initial scale. Let $F$ be the same set considered before, with respect to the initial scale $\alpha N_0'.$ Then $|\T^d\backslash F| < \kappa,$ and thus 
$$\left|\int_{\T^d\backslash F} L_{N_0}(x) - L_{N_1}(x) dx \right| < C\kappa$$ 
for all suitable $N_0 | N_1.$ Furthermore, on $F$ such that $L_{\alpha N_0'}(x) < 10^3 \kappa$ and 
$$L_{O(N_0')}(A, x + \alpha N_0'\omega) < N_0 \kappa,$$ 
we also have $L_{N_0}(x) < C\kappa$ and $L_{N_1}(x) < C\kappa.$ Since 
$$\left|\set{x:  L_{O(N_0')}(A, x + \alpha N_0'\omega) < \frac{N_0}{O(N_0')} \kappa}\right| < C\kappa,$$
by Chebyschev's inequality and our choice of $N_0',$ excluding this set only changes our final integral by a term of order $\kappa.$

On $F$ such that $L_{N_0'}(x) > 10^3 \kappa,$ we may define $N_{00}(x) < \kappa \alpha N_0' \leq \kappa N_0$ as before and note that there is $a \in \N$ such that $1/a < \kappa$ and
\begin{equation}
aN_{00} < N_0 < (a + 1) N_{00}.
\end{equation} 
Moreover, we have, by \eqref{eq:APApplication},
\begin{equation}
|L_{\alpha N_{0}'}(x) - L_{(\alpha+1)N_{0}'}(x)| < C \kappa^{1/2}.
\end{equation}

Next, we consider the set 
$$G = \set{x\in \T^d: \norm{\tilde A_{O(N_{0}')}(x + \alpha N_{0}'  \omega)} < e^{N_0\kappa}}.$$ 
Clearly, since $N_{0}' < \sqrt{N_0},$ and $N_0 > \kappa^{-C},$ with $C > 4,$ we may apply Chebyschev's inequality to obtain
$$|\T^d\backslash G| \leq C(A) \frac{N_{0}'}{N_0\kappa} < C\kappa.$$
It follows that $$\left|\int_{\T^d\backslash G} L_{N_0}(x) - L_{N_0'}(x) dx \right| < C\kappa.$$ 
On $G,$  $L_{O(N_{0}')}(A, x + \alpha N_{0}'\omega) < \frac{N_0}{O(N_0')}\kappa,$ so  
\begin{align}
L_{N_0}(x) &\leq \frac{\alpha N_0'}{N_0} L_{\alpha N_0'}(x) + \frac{O(N_0')}{N_0}L_{O(N_0')}(x + \alpha N_0' \omega)\\
&\leq L_{\alpha N_0'}(x) + \kappa.
\end{align}

We may similarly obtain
\begin{equation}
L_{(\alpha + 1)N_{0}'}(x) \leq L_{N_{0}}(x) + \kappa
\end{equation}
by considering a slightly different set $G'$ with $|\T^d\backslash G'| < \kappa$ still.
Thus
\begin{equation}
|L_{N_0}(x) - L_{\alpha N_{0}'}(x)| < C\kappa^{1/2}.
\end{equation}
We can similarly obtain the bound 
\begin{equation}
|L_{N_1}(x) - L_{\beta N_{0}'}(x)| < C\kappa^{1/2}
\end{equation}
for $\beta = c \alpha$ such that $\beta N_{0}' < N_1 < (\beta + 1) N_{0}'$ (away from a set of measure at most $\kappa).$ Finally,
$$|L_{\beta N_{0}'}(x) - L_{\alpha N_{0}'}(x)| < C\kappa^{1/2}$$
by \eqref{eq:96}.
Combining all of these with the triangle inequality, and Lemma \ref{Lem:UniformL2} we obtain our desired result.
\end{proof}



Our next step is to extend the conclusion of Lemma \ref{Lem:Liouv} to allow for larger scales $N_1.$ This is most directly achieved by iterating Lemma \ref{Lem:Liouv}: informally, we first apply the result with initial scale $N_0$ to obtain an estimate between scales $N_0$ and $N_1;$ then apply the result with initial scale $N_1$ to obtain an estimate between scales $N_1$ and new scale $N_2;$ repeating this iteration until we can no longer choose a suitable new initial scale. Unfortunately, as the result is stated now, this results in an error given by a multiple of $\kappa,$ and that multiple could, potentially, be large enough to overcome the desired $\kappa$ error. Therefore, it is necessary for us to rewrite the conclusion of Lemma \ref{Lem:Liouv} in a way which expresses the error in terms of $N_0$ and $N_1.$ This is achieved in the following.

\begin{mylemma}[Liouville Frequencies]\label{LiouvilleLemma}
Let $N_0, q_0 \in \N$ and $c = C^{-1},$ with $C$ as in Lemma \ref{Lem:Liouv}. Suppose $N_0 > q_0$ and $\norm{q_0\omega} < \rho^4 \frac{q_0}{N_0}.$ Then for $N_0 | N_1$ with $N_1 < \rho^2 \sqrt{\frac{N_0q_0}{\norm{q_0\omega}}},$
\begin{equation}
\left|L_{N_1} - L_{N_0}\right| < C'\left(\left(\frac{q_0}{N_0}\right)^{c/2} + \left(\frac{N_1^2\norm{q_0\omega}}{N_0q_0\rho^4}\right)^{c/2}\right).
\end{equation} Here $C' = C'(A)$ is a constant uniform in a neighborhood of the cocycle $A.$
\end{mylemma}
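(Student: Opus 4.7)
The plan is to deduce this from Lemma \ref{Lem:Liouv} by selecting $\kappa$ so that the hypothesis-side inequalities of that lemma become marginally tight, and then recognizing the resulting $\kappa^{1/2}$ error as exactly the quantity on the right-hand side of the target. With $c = 1/C$, I would set
\[
\kappa := 2 \max\left\{ \left(\frac{q_0}{N_0}\right)^{c},\ \left(\frac{N_1^2 \norm{q_0\omega}}{N_0 q_0 \rho^4}\right)^{c} \right\}.
\]
By construction $\kappa^C$ exceeds both $q_0/N_0$ and $N_1^2\norm{q_0\omega}/(N_0 q_0 \rho^4)$, which gives the lower-scale bound $N_0\kappa^C > q_0$ and the upper-scale bound $N_1 < \kappa^{C/2}\rho^2\sqrt{N_0 q_0/\norm{q_0\omega}}$ directly from the two members of the maximum. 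The remaining Liouville-type condition $\norm{q_0\omega} < \kappa^C \rho^4 q_0/N_0$ reduces, after clearing denominators, to $N_1^2 \geq N_0^2$, which is automatic from $N_0 \mid N_1$. The standing hypothesis $\norm{q_0 \omega} < \rho^4 q_0/N_0$ ensures these quantities are below $1$, so the constraints are genuine rather than vacuous.

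All hypotheses of Lemma \ref{Lem:Liouv} thus hold, producing $|L_{N_1} - L_{N_0}| < C'\kappa^{1/2}$. Since $\sqrt{a+b} \leq \sqrt{a} + \sqrt{b}$, this is bounded by $\sqrt{2}\,C'$ times the two-term sum appearing in the statement, and absorbing $\sqrt{2}$ into $C'$ finishes the principal case. No Fourier, large-deviation, or Avalanche-Principle input is needed beyond what Lemma \ref{Lem:Liouv} has already packaged; the step is purely a reparameterization of the error.

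The one real obstacle is that Lemma \ref{Lem:Liouv} tacitly requires $\kappa$ to be small enough for its internal induction (and in particular the auxiliary scale $s_0 \sim -\ln\kappa$) to make sense. If the choice above yields $\kappa \gtrsim 1$, then the right-hand side of the target inequality is itself bounded below by a positive constant, while $|L_{N_1} - L_{N_0}|$ is uniformly bounded in a neighborhood of $A$ via Lemma \ref{Lem:UniformL2} and the triangle inequality. Enlarging $C'$ to absorb this deterministic bound makes the stated estimate trivial in that regime, so the dichotomy between "$\kappa$ small, apply Lemma \ref{Lem:Liouv}" and "$\kappa$ not small, estimate is trivial" closes the argument.
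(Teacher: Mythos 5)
Your proposal is correct and follows essentially the same route as the paper: the paper defines $\kappa'$ as the infimal admissible $\kappa$ for Lemma~\ref{Lem:Liouv} and observes that the binding constraint is either \eqref{eq:N0LargeCondition} or \eqref{eq:N1SmallCondition} (condition \eqref{eq:LiouvCondition} being dominated by \eqref{eq:N1SmallCondition} once $N_1\geq N_0$), yielding $\kappa'=\max\bigl\{(q_0/N_0)^c,(N_1^2\norm{q_0\omega}/(N_0q_0\rho^4))^c\bigr\}$, which is exactly your choice up to the safety factor $2$. Your version is slightly more careful in two respects: the explicit factor of $2$ avoids the equality-at-the-infimum issue in the paper's minimality argument, and you explicitly dispose of the regime where the forced $\kappa$ is not small (via the uniform $L^2$ bound of Lemma~\ref{Lem:UniformL2}), a case the paper's statement ``$1>\kappa'>0$'' quietly presupposes.
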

\begin{proof}
Fix $N_0, q_0,$ and $\rho$ so that $N_0 > q_0$ and $\norm{q_0\omega} < \rho^4 \frac{q_0}{N_0}.$ Then let $1 > \kappa' > 0$ be the smallest possible $\kappa$ such that Lemma \ref{Lem:Liouv} is applicable. That is, $\kappa'$ is the smallest such number for which \eqref{eq:LiouvCondition}, \eqref{eq:N0LargeCondition}, and \eqref{eq:N1SmallCondition} all hold. Then $|L_{N_0} - L_{N_1}| < C'\kappa'.$ Moreover, equality must hold for one of \eqref{eq:LiouvCondition}, \eqref{eq:N0LargeCondition}, or \eqref{eq:N1SmallCondition}. 

If equality holds in \eqref{eq:LiouvCondition}, then $$\kappa' = \left(\frac{q_0}{N_0}\right)^c$$ and our conclusion follows. 

If, on the other hand, equality does not hold in \eqref{eq:LiouvCondition}, then equality must hold in \eqref{eq:N1SmallCondition}, since $N_1 > N_0.$ Thus 
$$\kappa' = \left(\frac{N_1^2\norm{q_0\omega}}{N_0q_0\rho^4}\right)^c$$
and our conclusion follows.
\end{proof}

It is now possible to iterate Lemma \ref{Lem:Liouv} as we described to allow for larger scales $N_1.$

\begin{mythm}\label{Thm:Liouv}
Suppose $N_0\kappa^C > q_0$ and $N_0 \norm{q_0\omega} < \kappa^C \rho^4 q_0.$ Then 
\begin{equation}
\left|L_{N'} - L_{N_0}\right| < C' \kappa^{1/6}
\end{equation}
for all $N'$ such that $N_0 | N', N' = 2^jN_0$ for some $j \geq 0,$ and 
\begin{equation}
N' < \kappa^C \rho^4 \frac{q_0}{\norm{q_0\omega}}.
\end{equation}
\end{mythm}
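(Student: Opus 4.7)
The plan is to iterate Lemma \ref{LiouvilleLemma} along a sequence of doubling scales. Set $N^{(i)} = 2^i N_0$ for $i = 0, 1, \ldots, m$, where $m = \log_2(N'/N_0) \in \mathbb{N}$ (using the hypothesis $N' = 2^j N_0$), so that $N^{(m)} = N'$. At each step I apply Lemma \ref{LiouvilleLemma} with initial scale $N^{(i)}$ and target $N^{(i+1)} = 2N^{(i)}$. The three hypotheses of that lemma are verified uniformly in $i$: (i) $N^{(i)} \geq N_0 > q_0$ from $N_0\kappa^C > q_0$; (ii) $\norm{q_0\omega} < \rho^4 q_0/N^{(i)}$ holds since $N^{(i)} \leq N' < \kappa^C \rho^4 q_0/\norm{q_0\omega}$; (iii) $2N^{(i)} < \rho^2 \sqrt{N^{(i)} q_0/\norm{q_0\omega}}$ reduces to $N^{(i)} < \rho^4 q_0/(4\norm{q_0\omega})$, which follows from the same upper bound on $N^{(i)}$ provided $\kappa^C \leq 1/4$, a condition absorbed into the smallness of $\kappa$.

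Applying Lemma \ref{LiouvilleLemma} at step $i$ yields
\begin{equation*}
|L_{N^{(i+1)}} - L_{N^{(i)}}| < C'\left(\left(\frac{q_0}{2^i N_0}\right)^{c/2} + \left(\frac{4 \cdot 2^i N_0 \norm{q_0\omega}}{q_0 \rho^4}\right)^{c/2}\right).
\end{equation*}
Summing via the triangle inequality, $|L_{N'} - L_{N_0}|$ is bounded by the total of two geometric series in $i$. The first, with common ratio $2^{-c/2} < 1$, is dominated by its initial term, giving a constant multiple of $(q_0/N_0)^{c/2} \leq \kappa^{Cc/2}$. The second, with common ratio $2^{c/2} > 1$, is dominated by its final term; since $2^m N_0 = N'$, the total is bounded by a constant multiple of $(4N'\norm{q_0\omega}/(q_0\rho^4))^{c/2} \leq \kappa^{Cc/2}$, using the upper bound on $N'$. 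Hence $|L_{N'} - L_{N_0}| \leq C'' \kappa^{Cc/2}$ for a constant $C''$ uniform in a neighborhood of $A$.

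Choosing $C$ in the hypothesis sufficiently large that $Cc/2 \geq 1/6$ (equivalently $C \geq 1/(3c)$) completes the proof. The main subtlety, and the reason for the rewriting from Lemma \ref{Lem:Liouv} to Lemma \ref{LiouvilleLemma} emphasized in the text, is that the number of steps $m$ can be arbitrarily large when $\norm{q_0\omega}$ is very small, so naive summation of $\kappa$-errors would not yield a $\kappa$-uniform bound. The resolution is that both geometric series have common ratios bounded away from $1$ independently of $m$, so the total error depends only on the extreme terms; and those extreme terms are controlled respectively by the two hypotheses $N_0\kappa^C > q_0$ and $N' < \kappa^C \rho^4 q_0/\norm{q_0\omega}$.
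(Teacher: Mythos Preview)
Your proof is correct and takes a genuinely simpler route than the paper's. The paper iterates Lemma~\ref{LiouvilleLemma} along a sequence $N_s \sim N_{s-1}^{2/3}\bigl(q_0\rho^4/\norm{q_0\omega}\bigr)^{1/3}$, i.e.\ it takes large multiplicative jumps chosen so that the two error terms in Lemma~\ref{LiouvilleLemma} are roughly balanced at each step; this forces the second term to appear with exponent $c/6$ rather than $c/2$, and the final summation uses the crude two-sided bound $2^{j-1}N_0 \le N_{j-1} \le 2^{-(s_0+1-j)}\kappa^C\rho^4 q_0/\norm{q_0\omega}$, ultimately producing the stated $\kappa^{1/6}$. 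Your pure doubling scheme keeps the exponent at $c/2$ throughout and, since $c=C^{-1}$, in fact gives $|L_{N'}-L_{N_0}|\lesssim \kappa^{1/2}$, stronger than the paper's conclusion; the last sentence about ``choosing $C$ large enough that $Cc/2\ge 1/6$'' is therefore unnecessary, as $Cc/2=1/2$ automatically.

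Two minor points worth recording. First, the implicit constants $1/(1-2^{-c/2})$ and $1/(2^{c/2}-1)$ from your geometric sums are of size $O(1/c)$; since $c$ is a fixed constant determined by $A$, this is harmless and is correctly absorbed into $C''$. Second, the paper later reuses its specific intermediate scale $N_0\bigl(\rho^3 q_0/(N_0\norm{q_0\omega_1})\bigr)^{1/3}$ when extending the range of $N_1$ in Theorem~\ref{Thm:Mixed}; your doubling argument would serve equally well there, so this is not an obstruction.
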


\begin{proof}

Fix $N' < \kappa^C \rho^4 \frac{q_0}{\norm{q_0\omega}}.$ We will construct a sequence of scales, $N_s,$ inductively. Starting with $N_0$ such that $q_0 < \kappa^C N_0$ and $\norm{q_0\omega} < \kappa^C \rho^4 \frac{q_0}{N_0},$ we define
\begin{equation}
N_s \sim N_{s-1}^{2/3} \left(\frac{q_0\rho^4}{\norm{q_0\omega}}\right)^{1/3}
\end{equation}
where $\sim$ here indicates that we take 
any
value no larger than the right hand side such that $N_{s-1} | N_s$ and $N_{s -1} \ne N_s.$ This last condition is possible because, by an inductive argument, $N_{s-1}^{2/3}\left(\frac{q_0\rho^4}{\norm{q_0\omega}}\right)^{1/3} > N_{s-1}\kappa^{-C} \geq 2N_{s-1}.$ Observe that this last condition ensures that $N_s$ is an increasing sequence, and thus there is some $s_0 \geq 1$ such that 
$$N_{s_0} < \kappa^C \rho^4 \frac{q_0}{\norm{q_0\omega}}$$
and
$$N_{s_0 + 1} > \kappa^C \rho^4 \frac{q_0}{\norm{q_0\omega}}.$$
Moreover, we have, for $0 \leq s \leq s_0,$
\begin{align}
N_s &\leq N_{s - 1}^{2/3}\left(\frac{q_0\rho^4}{\norm{q_0\omega}}\right)^{1/3}\\
&= N_{s - 1}^{1/2}N_{s - 1}^{1/6}\left(\frac{q_0\rho^4}{\norm{q_0\omega}}\right)^{1/3}\\
&\leq N_{s - 1}^{1/2} \kappa^{C/6}\left(\frac{q_0\rho^4}{\norm{q_0\omega}}\right)^{1/6}\left(\frac{q_0\rho^4}{\norm{q_0\omega}}\right)^{1/3}\\
&= \kappa^{C/6}\rho^2\left(\frac{q_0N_{s - 1}}{\norm{q_0\omega}}\right)^{1/2}\\
&\leq \rho^2\left(\frac{q_0N_{s - 1}}{\norm{q_0\omega}}\right)^{1/2}.
\end{align}
Thus Lemma \ref{LiouvilleLemma} is applicable at scales $N_s$ and $N_{s - 1}$ when $0 \leq s \leq s_0.$
Applying Lemma \ref{LiouvilleLemma}, we obtain, for $s \leq s_0,$
\begin{align}|L_{N_s} - L_{N_{s - 1}}| &< C' \left(\left(\frac{q_0}{N_{s-1}}\right)^{c/2} + \left(\frac{N_s^2\norm{q_0\omega}}{N_{s-1}q_0\rho^4}\right)^{c/2}\right)\\
&\leq C' \left(\left(\frac{q_0}{N_{s-1}}\right)^{c/2} + \left(\frac{N_{s-1}\norm{q_0\omega}}{q_0\rho^4}\right)^{c/6}\right).
\end{align}

Now we consider any $N' < \kappa^C\rho^4\frac{q_0}{\norm{q_0\omega}}.$ There are two possibilities: $(i)$ $N_s < N' \leq N_{s+1}, s < s_0,$ or $(ii)$ $N' > N_{s_0}.$ 

In the first case, we may simply redefine the appropriate $N_{s + 1} = N'.$ Then, we observe that $2^{j - 1}N_0 \leq N_{j - 1} \leq 2^{-(s_0 + 1 - j)}\kappa^C\rho^4\frac{q_0}{\norm{q_0\omega}},$ by construction.  This yields 
\begin{align}
|L_{N_0} - L_{N'}| &\leq \sum_{j = 1}^s |L_{N_{j-1}} - L_{N_j}|\\
&\leq \sum_{j = 1}^s C' \left(\left(\frac{q_0}{N_{j-1}}\right)^{c/2} + \left(\frac{N_{j-1}\norm{q_0\omega}}{q_0\rho^4}\right)^{c/6}\right)\\
&\leq \sum_{j = 1}^s C' \left(\left(\frac{q_0}{N_{0}}2^{1 - j}\right)^{c/2} + \left(2^{-(s_0 + 1 - j)} \kappa^C\right)^{c/6}\right)\\
&\leq C'' \left(\left(\frac{q_0}{N_0} \right)^{c/2} + \kappa^{1/6}\right)\\
&\leq C''(\kappa^{1/2} + \kappa^{1/6})\\
&\leq C'' \kappa^{1/6}.
\end{align}

For case $(ii),$ we may repeat the same argument as above, plus the observation that
\begin{align}
N' &< N_{s_0+1} \\
&\sim N_{s_0}^{2/3} \left(\frac{q_0\rho^4}{\norm{q_0\omega}}\right)^{1/3}\\
&< \kappa^{C/6} N_{s_0}^{1/2} (q_0\rho^4/\norm{q_0\omega})^{1/2}.
\end{align}
Now, we may apply lemma \ref{Lem:Liouv} with $N_0 = N_{s_0}$ and $N_1 = N'$ and $\kappa$ replaced by $\kappa^{1/3}$ to obtain the estimate
$$|L_{N_{s_0}} - L_{N'}| < C' \kappa^{1/6}.$$
This completes our proof.
\end{proof}

Note that this immediately implies continuity of $L(A,\omega)$ in $A$ whenever the components of $\omega,$ are rational.


\section{Comparing Lyapunov exponents at different scales: mixed Liouville-Diophantine frequencies} \label{Section:Mixed}


In this section, we turn out attention to obtaining estimates of the form $|L_{N_0}(A,\omega) - L_{N_1}(A,\omega)| < C\kappa$ for those $\omega$ which are not necessarily Liouville. 

\begin{mythm}\label{Thm:Mixed}
Assume $x = (x_1, x_2) \in \T^{d_1} \times \T^{d_2} = \T^d.$ Suppose that $\omega = (\omega_1, \omega_2) \in \T^{d_1} \times \T^{d_2}$ is such that there is $\delta > 0, 0 < K_0 \in \Z$  and $0 < q_0 \in \Z$ such that 
\begin{equation}\label{eq:Mixed:hyp1}
\norm{q_0\omega_1} < \kappa^C \rho^3 q_0/N_0
\end{equation}
and
\begin{equation}
\norm{k \cdot \omega_2} > \delta
\end{equation}
for  all $k \in \Z^{d_2}, 0 < |k| < K_0,$ where
\begin{align}
K_0 &> (\rho^{1 + c}\kappa)^{-C}q_0\label{eq:Mixed:hyp2}\\
N_0 &> \kappa^{-C}\delta^{-1}K_0.\label{eq:Mixed:hyp3}
\end{align}
Then 
\begin{equation}
|L_N - L_{N_0}| < C'\kappa^{1/6}
\end{equation}
when $N_0 | N, N = 2^jN_0$ for some $j \geq 0,$ and 
\begin{equation}
N < \min\set{\kappa^C \rho^3 \frac{q_0}{\norm{q_0\omega_1}}, N_0 e^{\left(\frac{K_0}{q_0}\right)^c}}.
\end{equation} 
Here $C$ is an absolute constant defined in the proof and $c = C^{-1}$ above.
\end{mythm}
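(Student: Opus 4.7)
The plan is to follow the scheme of Lemma \ref{Lem:Liouv} and Theorem \ref{Thm:Liouv}, with the essential modification that the measure estimate on the ``good set'' must now combine Lemma \ref{Lem:CDT} in the $\omega_1$-directions with Theorem \ref{Thm:LDT} in the $\omega_2$-directions. The key observation is that the proof of Lemma \ref{Lem:BaseEstAP} only uses the defining property of the good set $F$ together with the Avalanche Principle consequence Theorem \ref{Thm:AvalanchePrincipleConsequence}; it is indifferent to the particular mechanism by which $F$ is shown to have large measure. Thus the structural skeleton of Lemma \ref{Lem:Liouv} transports verbatim, and only the measure estimate requires new work.

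I would first handle the base case $N_0 = a 2^b q_0$ with $b > -C\ln\kappa/2$. Define
\begin{equation*}
F = \bigcap_{n=0}^{s_0}\bigcap_{s=0}^{s_0}\bigcap_{m=1}^{N_1 2^s/N_0} F_{mN_0/2^sq_0}^{2^{-n}N_0}, \qquad F_j^N = \set{x \in \T^d : |L_N(x + jq_0\omega) - L_N(x)| < \kappa},
\end{equation*}
with $s_0 = -C_3\ln\kappa$. To estimate $|\T^d \setminus F_j^N|$, write $x = (x_1, x_2) \in \T^{d_1}\times \T^{d_2}$ and split
\begin{align*}
|L_N(x + jq_0\omega) - L_N(x)| &\leq |L_N(x_1 + jq_0\omega_1, x_2 + jq_0\omega_2) - L_N(x_1, x_2 + jq_0\omega_2)| \\
&\quad + |L_N(x_1, x_2 + jq_0\omega_2) - L_N(x_1, x_2)|.
\end{align*}
For the first summand I would apply Lemma \ref{Lem:CDT} in the $x_1$-variable (with $a = jq_0\omega_1$ and Fubini in $x_2$), producing a bad set of measure $\lesssim \kappa^{-3}|j|\norm{q_0\omega_1}$. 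For the second summand I would apply Theorem \ref{Thm:LDT} in the $x_2$-variable to conclude that, outside an exponentially small set, both $L_N(x_1,x_2)$ and $L_N(x_1, x_2 + jq_0\omega_2)$ are within $C_\rho K_0^{-c}$ of the mean $\langle L_N\rangle$. Their difference is therefore at most $2C_\rho K_0^{-c}$, which is $\ll \kappa$ by \eqref{eq:Mixed:hyp2}. The LDT hypothesis $N > K_0\delta^{-1}$ must hold at every scale $2^{-n}N_0$ with $n \leq s_0$; this reduces to $N_0 \kappa^{C_3\ln 2} > K_0\delta^{-1}$, which is exactly what \eqref{eq:Mixed:hyp3} guarantees once $C > C_3$.

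Summing the two contributions over $n, s, m$ precisely as in Lemma \ref{Lem:Liouv}, the Diophantine (LDT) piece contributes an exponentially small measure which is negligible, while the Liouville piece gives the dominant bound $|\T^d \setminus F| \lesssim \kappa^{-3-C_3}\norm{q_0\omega_1}N_1^2/(N_0 q_0)$. The condition $N_1 < \kappa^C\rho^3 q_0/\norm{q_0\omega_1}$ makes this smaller than $\kappa$, and combining with Lemma \ref{Lem:UniformL2} and Lemma \ref{Lem:BaseEstAP} completes the base estimate $|L_{N_1} - L_{N_0}| < C'\kappa^{1/2}$. The reduction from $N_0 = a2^b q_0$ to general $N_0$ satisfying \eqref{eq:Mixed:hyp3} proceeds exactly as in the second half of Lemma \ref{Lem:Liouv}.

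For the extension to the full range of $N$, I would recast the base estimate in the form of Lemma \ref{LiouvilleLemma} (with an additional $K_0^{-c}$ term on the right from the LDT contribution, which remains dominated by the other terms thanks to \eqref{eq:Mixed:hyp2}) and then iterate exactly as in the proof of Theorem \ref{Thm:Liouv}, defining a geometric sequence $N_s \sim N_{s-1}^{2/3}(q_0 \rho^3/\norm{q_0\omega_1})^{1/3}$ and stopping at the first $s_0$ for which $N_{s_0}$ is within a factor of the Liouville cap $\kappa^C \rho^3 q_0/\norm{q_0\omega_1}$. The Diophantine cap $N_0 \exp((K_0/q_0)^c)$ enters by ensuring that the LDT hypothesis $N_s > K_0 \delta^{-1}$ survives the iteration, which it does since the $N_s$ are increasing and we started with $N_0 \gg K_0 \delta^{-1}$. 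The main obstacle I expect is the bookkeeping: one must verify that the accumulated errors from the $O(\log(1/\kappa))$ iterates, including the contributions $K_0^{-c}$ from each use of LDT, telescope to $O(\kappa^{1/6})$ rather than $O(\kappa^{1/6}\log(1/\kappa))$, which is guaranteed by the fact that $K_0^{-c}$ is super-polynomially small relative to $\kappa$ thanks to the explicit form of \eqref{eq:Mixed:hyp2} and the geometric growth of the $N_s$.
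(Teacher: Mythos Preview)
Your overall architecture is right---restrict to the set $F$, invoke Lemma \ref{Lem:BaseEstAP}, and then bound $|\T^d\setminus F|$ by combining a Liouville estimate in $x_1$ with a Diophantine estimate in $x_2$. But the way you propose to handle the $x_2$-piece has a real gap.

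You want to bound $|L_N(x_1, x_2 + jq_0\omega_2) - L_N(x_1, x_2)|$ by applying Theorem \ref{Thm:LDT} ``in the $x_2$-variable'' to conclude that both values are close to the mean. This does not work: Theorem \ref{Thm:LDT} is not a statement about abstract plurisubharmonic functions but about the specific cocycle quantity $L_N(B,x)$ under the full shift $x\mapsto x+\omega$. Its proof passes from the averaged form (Lemma \ref{Lem:LDTver1}) to the pointwise form via Lemma \ref{Lem:CDTVar}, i.e.\ via almost-invariance of $L_N$ under $x\mapsto x+\omega$. With $x_1$ frozen, $L_N(x_1,\cdot)$ is \emph{not} almost-invariant under the partial shift $x_2\mapsto x_2+\omega_2$, so there is no mechanism to pass from an $x_2$-average to a pointwise value. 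In effect, proving $L_N(x_1,x_2)\approx L_N(x_1,x_2+jq_0\omega_2)$ this way presupposes exactly the invariance you are trying to establish.

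The paper breaks this circularity differently. It first uses Lemma \ref{Lem:CDTVar} for the \emph{full} shift to write $L_N(x)\approx \frac1R\sum_{j<R} L_N(x+jq_0\omega)$ with $R<\kappa\sqrt{N}/q_0$; then applies Lemma \ref{Lem:CDT} in $x_1$ to replace each summand by $L_N(x_1,x_2+jq_0\omega_2)$; and only then applies Lemma \ref{Lem:LDTver1} (not Theorem \ref{Thm:LDT}) in $x_2$ with frequency $q_0\omega_2$, which needs only Fourier decay and therefore does hold for the frozen-$x_1$ function. This yields $L_N(x)\approx \int L_N(x_1,x_2)\,dx_2$ up to controlled $L^1$ errors, and a final application of Lemma \ref{Lem:CDT} in $x_1$ shows the right-hand side is nearly invariant under $x\mapsto x+jq_0\omega$. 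Note that the frequency becomes $q_0\omega_2$, so the Diophantine condition degrades to scale $K_0/q_0$; this, and not the ``LDT hypothesis $N_s>K_0\delta^{-1}$'' you mention, is the source of the cap $N_0\exp((K_0/q_0)^c)$: summing the exponentially small LDT bad sets over the $\sim (N_1/N_0)\kappa^{-C'}$ indices $j$ gives a term $\kappa^{-C'}(N_1/N_0)e^{-\rho^{1+c}(K_0/q_0)^c}$, which is only small when $N_1/N_0$ stays below that exponential. So this contribution is not ``negligible'' in the base case---it is precisely what forces the second minimum in the allowed range of $N$.
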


We would like to make a note about our general approach to the proof, before we present the full details. As we remarked after Lemma \ref{Lem:Liouv}, the argument we used to establish Lemma \ref{Lem:Liouv} applies once we restrict our attention to a suitable set $F.$ Here, we will restrict to a suitable set $F$ defined analogously, but we cannot just appeal to Lemma \ref{Lem:CDT} to show $\T^d \backslash F$ has small measure, since Lemma \ref{Lem:CDT} requires a small change in $x,$ and we assume a Diophantine condition (corresponding to a large change in $x$) for part of $\omega.$ Therefore, the main difficulty we face here is showing $\T^d \backslash F$ has small measure.  
The idea we present here is to use Lemma \ref{Lem:CDT} in the variables where we have suitable smallness of the frequency (namely $x_1$ with $\omega_1$) and use Theorem \ref{Thm:LDT} in the variables where we have a suitable Diophantine condition. Together, this will yield smallness of $\T^d \backslash F.$


\begin{proof}
As in Lemma \ref{Lem:Liouv}, we will consider those $N_0$ of the form $a2^{b}q_0,$ for suitably large $b.$ The case of general $N_0$ will then follow in the same way as before.

Consider the set $F$ consisting of points $x = (x_1,x_2) \in \T^{d_1} \times \T^{d_2} = \T^d$ such that 
$$\left|L_N(x) - L_N(x + jq_0\omega)\right| < \kappa$$
for all $2^{-s_0}N_0 \leq N \leq N_0,$ where $s_0 = - C_1\ln\kappa,$ 
$C_1 > 4,$ with $C>0$ 
such that $C > 6(3C_1 + 3) + 1,$ $N$ is of the form $2^{-s}N_0,$ and $j \leq N_1/q_0$ is such that $N_0/2^sq_0$ divides $j$ for some $s \leq s_0.$  We are in precisely the setting of Lemma \ref{Lem:BaseEstAP}, so we conclude that
\begin{equation}
|\int_F L_{N_1}(x) - L_{N_0}(x) dx| < C''\kappa.
\end{equation}
It now suffices to understand $|\T^d \backslash F|.$


We will begin by restricting our attention to the set 
$$H_N = \set{x\in \T^d: |L_N(x) - L_N(x + jq_0\omega)|< C(A) |j| q_0 N^{-1/2}, 1 \leq j \leq \kappa N/q_0 },$$
which, by Lemma \ref{Lem:CDTVar}, has small complement
$$|\T^d\backslash H_N| < \kappa e^{-N^{1/3}}.$$ 

As in Lemma \ref{Lem:BaseEstAP}, we observe that Lemma \ref{Lem:CDT} applies to $L_N(x_1,x_2)$ in the first variable, which yields
\begin{align*}
L_N(x + jq_0\omega) &= L_N(x_1 + j q_0 \omega_1, x_2 + j q_0 \omega_2)\\
&= L_N(x_1, x_2 + jq_0\omega_2) + O(\kappa),
\end{align*}
away from a set $G_{N,j}\subset \T^d$ of small measure:
$$|G_{N,j}| < C' \kappa^{-3}\rho^{-3} |j|\norm{q_0\omega_1}.$$
Hence
\begin{equation}
L_N(x + jq_0\omega) = L_N(x_1, x_2 + jq_0\omega_2) + O(\kappa) + g_{N,j}(x),
\end{equation}
where $g_{N,j}(x)$ is a function such that $\norm{g_{N,j}}_{L^1(\T^d)} \leq C' \kappa^{-3}\rho^{-3} |j|\norm{q_0\omega_1}.$ In particular, $g_{N,j}(x)$ is the restriction of $L_N(x)$ to the set $G_{N,j}.$
Moreover, for $x \in H_N \cap \bigcup_{j = 0}^{R - 1}(\T^d\cap G_{N,j})$ and $R < \kappa \frac{\sqrt N}{q_0},$
\begin{align}
L_N(x) &= \frac1R \sum_{j = 0}^{R - 1} L_N(x + jq_0\omega) + C' R q_0/\sqrt{N}\\
&= \frac1R \sum_{j = 0}^{R - 1} L_N(x_1, x_2 + jq_0\omega_2) + O(\kappa)\label{eq:LotsofO}
\end{align}
and thus
\begin{equation}
L_N(x) = \frac1R \sum_{j = 0}^{R - 1} L_N(x_1, x_2 + jq_0\omega_2) + O(\kappa) + g_N(x),
\end{equation}
where $g_N(x) = \frac1R \sum_{j = 0}^{R - 1} g_{N,j}(x)$ satisfies $\norm{g_{N}}_{L^1(\T^d)} \leq C' \kappa^{-3}\rho^{-3} R\norm{q_0\omega_1}.$
Note that the $O(\kappa)$ term is no larger than $2\kappa.$ 

Now, by defining $\omega_2' = q_0\omega_2,$ we have $\norm{k \cdot \omega_2'} > \delta$ for all $0 < |k| < K_0/q_0.$ Thus we may apply Lemma \ref{Lem:LDTver1} to $L_N(x_1,x_2)$ in the variable $x_2$ to obtain
\begin{align*}
\bigg|\bigg\{(x_1,x_2)\in \T^d: \bigg|\frac{1}{R} \sum_{j = 0}^{R - 1} L_N(x_1, x_2 + jq_0\omega_2) &- \int L_N(x_1,x_2) dx_2\bigg| > \rho^{-1}(K_0/q_0)^c\bigg\} \bigg|\\
&< e^{-\rho^{1 + c}(K_0/q_0)^c}.
\end{align*}
Denote by $\Gamma_N$ the set on the left hand side.
We have, $\rho^{-1}(K_0/q_0)^{-c} < \kappa,$ so for $(x_1,x_2) \not\in \Gamma_N,$
$$\frac{1}{R} \sum_{j = 0}^{R - 1} L_N(x_1, x_2 + jq_0\omega_2) = \int L_n(x_1, x_2) dx_2 + O(\kappa).$$ 
On the other hand, the integral of $\frac{1}{R} \sum_{j = 0}^{R - 1} L_N(x_1, x_2 + jq_0\omega_2) $ over $\Gamma_N$ obeys
$$\int_{\Gamma_N} \left|\frac{1}{R} \sum_{j = 0}^{R - 1} L_N(x_1, x_2 + jq_0\omega_2)  \right|dx_2 \leq C' e^{-\frac12\rho^{1 + c}(K_0/q_0)^c},$$ 
by Lemma \ref{Lem:UniformL2}.  
Hence
\begin{equation}
\frac{1}{R} \sum_{j = 0}^{R - 1} L_N(x_1, x_2 + jq_0\omega_2)  = \int L_n(x_1, x_2) dx_2 + O(\kappa) + \gamma_N(x),
\end{equation}
where $\gamma_N(x)$ is the restriction of $L_N(x)$ to $\Gamma_N$ and satisfies $\norm{\gamma_N}_{L^1(\T^d)} < e^{-\frac12\rho^{1 + c}(K_0/q_0)^c}.$ Combining this with \eqref{eq:LotsofO}, we have
\begin{equation}
L_N(x) = \int L_N(x_1, x_2) dx_2 + O(\kappa) + g_N(x) + \gamma_N(x).
\end{equation}

We may apply Lemma \ref{Lem:CDT} again in $x_1$ to $\int L_N(x_1,x_2)dx_2$ to obtain
\begin{equation}
\left|\int L_N(x_1, x_2)dx_2 - \int L_N(x_1 + jq_0\omega_1, x_2 + jq_0\omega_2) dx_2 \right| = O(\kappa) + h_{N,j}(x),
\end{equation}
where $h_{N,j}(x)$ is a suitable restriction of $L_N(x)$ and satisfies $\norm{h_{N,j}(x)}_{L^1} < \kappa^{-3}\rho^{-3}|j|\norm{q_0\omega_1}.$

Altogether, we have
\begin{equation}
\left|L_N(x) - L_N(x + jq_0\omega)\right| = O(\kappa) + g_N(x) + \gamma_N(x) + h_{N,j}(x).
\end{equation}

It now follows that
$$|\T^d \backslash F| < \kappa^{-C'}\rho^{-3}\frac{N_1^2\norm{q_0\omega_1}}{q_0N_0} + \kappa^{-C'}\frac{N_1}{N_0}e^{-\rho^{1 + c}\left(\frac{K_0}{q_0}\right)^c}.$$
Here $c$ is a sufficiently small constant which is anything smaller than both the constant from Theorem \ref{Thm:LDT} and $1/C,$ and $C'$ is such that $C/3 > C'.$
In particular, this computation follows by observing that $\T^d\backslash F$ is contained in the union of the supports of $g_N, \gamma_N,$ and $h_{N,j}$ over all relevant $N$ and $j.$ 

It follows that
\begin{equation}
\left|L_{N_1} - L_{N_0}\right| < C''\kappa + \kappa^{-C'}\rho^{-3}\frac{N_1^2\norm{q_0\omega_1}}{q_0N_0} + \kappa^{-C'}\frac{N_1}{N_0}e^{-\rho^{1 + c}\left(\frac{K_0}{q_0}\right)^c}.
\end{equation}
Now, if $N_1$ is such that
$$\rho^{-3}\frac{N_1^2\norm{q_0\omega_1}}{q_0N_0} + \frac{N_1}{N_0}e^{-\rho^{1 + c}\left(\frac{K_0}{q_0}\right)^c} < \kappa^{1 + C'},$$ 
then we have our desired bound. Moreover, if
$$\frac{N_1}{N_0} \leq \min\set{\left(\frac{\rho^3q_0}{N_0\norm{q_0\omega_1}}\right)^{1/3}, e^{\frac12 \rho^{1 + c} (K_0/q_0)^c}},$$
then by direct computation (using \eqref{eq:Mixed:hyp1}) such a bound is satisfied. 

We can now use the exact same argument as was used in Theorem \ref{Thm:Liouv} to extend our allowable length scales $N_1$ to the desired range, since the scale $$N_0 \left(\frac{\rho^3q_0}{N_0\norm{q_0\omega_1}}\right)^{1/3}$$ is precisely the intermediate scale we used to extend Lemma \ref{Lem:Liouv} to Theorem \ref{Thm:Liouv}.
\end{proof}
 

\section{Continuity of Lyapunov exponents}\label{Section:ContLemma}


The main technical lemma which we will establish at the end of this section is the following.
\begin{mylemma}\label{Lem:FinalLemma}
Assume $x = (x_1, x_2) \in \T^{d_1} \times \T^{d_2}, \omega = (\omega_1, \omega_2) \in \T^{d_1} \times \T^{d_2}$ such that
$$\norm{q_0 \omega_1} = 0$$
and
$$\norm{k\cdot\omega_2} > \delta \quad \text{for } k\in \Z^{d_2}, 0 < |k| \leq K,$$
where
$$q_0 < K^{1/10}.$$
Then
$$|L_N - L| < \kappa$$
for all $N > K^2/\delta.$
\end{mylemma}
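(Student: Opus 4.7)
The plan is to apply Theorem~\ref{Thm:Mixed} iteratively --- starting from a base scale just above $K^2/\delta$ and extending to all larger scales with a telescoped error --- and then pass to the limit $N \to \infty$, using $L = \lim_N L_N$, to obtain the claimed bound.

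First I check the base step. Take $N_0$ slightly above $K^2/\delta$, set $K_0 = K$, and keep $q_0$ as in the hypothesis. Since $\norm{q_0\omega_1} = 0$, the bound $\norm{q_0\omega_1} < \kappa^C \rho^3 q_0/N_0$ holds trivially and the first term in $\min\{\kappa^C \rho^3 q_0/\norm{q_0\omega_1},\ N_0 e^{(K_0/q_0)^c}\}$ from the conclusion of Theorem~\ref{Thm:Mixed} is vacuous. The Diophantine assumption on $\omega_2$ supplies $\norm{k\cdot\omega_2} > \delta$ for $0 < |k| < K_0$ directly. The remaining size conditions $K_0 > (\rho^{1+c}\kappa)^{-C} q_0$ and $N_0 > \kappa^{-C}\delta^{-1}K_0$ hold once $K$ is sufficiently large in $\kappa^{-1}$, $\delta^{-1}$, and $\rho^{-1}$, using $q_0 < K^{1/10}$ and $N_0 \sim K^2/\delta$. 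A single application of Theorem~\ref{Thm:Mixed} then yields
\begin{equation}
|L_N - L_{N_0}| < C' \kappa^{1/6} \quad \text{for all } N_0 \mid N,\ N = 2^j N_0,\ N \le N_0 e^{(K/q_0)^c}.
\end{equation}

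Next I iterate. Define $N^{(j)} = N^{(j-1)} e^{(K/q_0)^c / 2}$ (rounded down to an appropriate dyadic multiple of $N^{(j-1)}$) and apply Theorem~\ref{Thm:Mixed} again at base scale $N^{(j-1)}$ with error parameter $\kappa_j = \kappa/2^{j+1}$. The telescoped error is then bounded by the geometric sum $\sum_j C' \kappa_j^{1/6} < C'' \kappa^{1/6}$, independent of the number of iterations. The $\omega$-dependent hypotheses of Theorem~\ref{Thm:Mixed} are invariant across stages, the condition $N^{(j)} > \kappa_j^{-C}\delta^{-1}K$ is strengthened as $N^{(j)}$ grows, and the remaining requirement $K > (\rho^{1+c}\kappa_j)^{-C} q_0$ reduces, using $q_0 < K^{1/10}$, to $\kappa_j \gtrsim K^{-9/(10C)}$, which holds throughout the iteration provided $K$ is taken large enough at the outset.

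The main obstacle is to push the iteration past the scales reachable by finitely many applications --- roughly $N_0 \exp\bigl((\log K)(K/q_0)^c\bigr)$ --- up to arbitrary $N$. Following the outline's prescription of alternating toral automorphisms with Theorem~\ref{Thm:Mixed}, I would intersperse changes of variables $x \mapsto Mx$ for $M \in GL(d,\Z)$, which leave $L_N$ invariant via $L_N(A,\omega) = L_N(A \circ M, M^{-1}\omega)$ but rewrite the frequency as $M^{-1}\omega$. A judicious choice of $M$ exposes a refined rational-Diophantine decomposition of the new frequency with larger effective Diophantine cutoff or smaller effective denominator, producing a strictly larger extension factor per iteration. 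After alternating iteration and automorphism finitely many times, any target $N > K^2/\delta$ is covered with cumulative error at most $C'' \kappa^{1/6}$. Since $L = \lim_{N'\to\infty} L_{N'}$, passing to the limit along the iteration gives $|L - L_N| \le C'' \kappa^{1/6}$ uniformly in $N > K^2/\delta$; replacing $\kappa$ by $\kappa^6/(C'')^6$ at the outset then yields the desired bound $|L - L_N| < \kappa$.
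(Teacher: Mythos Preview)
There is a genuine gap. The paper's proof is a one-line application of Theorem~\ref{Thm:MainStep} with $\epsilon_0 = 0$: since $\norm{q_0\omega_1} = 0$, the upper restriction $N_1 \le \epsilon_0^{-1}K_0^{-1}$ becomes vacuous, so Theorem~\ref{Thm:MainStep} immediately gives $|L_N - L_{N'}| < K^{-c}$ for all $N' = 2^jN$, and sending $N'\to\infty$ finishes.

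Your attempt to bypass Theorem~\ref{Thm:MainStep} and iterate Theorem~\ref{Thm:Mixed} directly does not close. The extension factor per application of Theorem~\ref{Thm:Mixed} is $e^{(K/q_0)^c}$, which is \emph{fixed} (independent of $\kappa$), so reaching a target scale $N$ requires on the order of $\log(N/N_0)\cdot (K/q_0)^{-c}$ iterations, unbounded as $N\to\infty$. If you keep $\kappa$ constant across iterations the errors accumulate without bound; if you take $\kappa_j = \kappa/2^{j+1}$ as you propose, the hypothesis $K > (\rho^{1+c}\kappa_j)^{-C}q_0$ of Theorem~\ref{Thm:Mixed} fails as soon as $2^{j+1} \gtrsim \kappa\, K^{9/(10C)}$. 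Since $K$ is given by the lemma's hypotheses and cannot be enlarged after the fact, your claim that the condition ``holds throughout the iteration provided $K$ is taken large enough at the outset'' is simply false.

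The escape you sketch via toral automorphisms is exactly the content of Theorem~\ref{Thm:MainStep}, and it is not a minor refinement. That theorem does not merely alternate automorphisms with Theorem~\ref{Thm:Mixed}; it runs an induction on $d_2$, at each stage using Lemma~\ref{Lem:CoV} to transfer one Diophantine direction into the rational block and replacing the cutoff $K_s$ by $K_{s+1} = K_s^{20}$ (and $K_s^{C_1}$ after the change of variables), so that the successive errors $K_s^{-c}$ are geometrically summable while the reachable scale grows without bound. Your single sentence about ``a judicious choice of $M$'' and ``a strictly larger extension factor'' does not supply this mechanism; without the growing $K_s$ and the induction on dimension, the automorphism step is a restatement of the obstacle, not a resolution of it.
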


This lemma will be sufficient to obtain continuity due to the following observation. Every $\omega = (\omega_1,...,\omega_d) \in \T^d$ falls into one of three categories: $(i)$ $k \cdot \omega \ne 0$ for every $k \in \Z^d\backslash \{0\},$ 
$(ii)$ $\omega_j \in \Q$ for some $j,$ or $(iii)$ $\omega_j$ are all irrational but rationally dependent. The above lemma clearly applies to the $\omega$ in the first two of these possibilities. The last possibility can be ``transformed'' into the second possibility after an application of an appropriate toral automorphism (i.e. a suitable change of variables), $B \in SL(d,\Z).$ Indeed, for any $B \in SL(d,\Z),$ we have:
\begin{align}
L_N(A,\omega) &= \frac 1 N \int \ln\norm{\prod_{j = N - 1}^0 A(x + j\omega)} dx \\
&= \frac 1 N \int \ln\norm{\prod_{j = N - 1}^0 (A\circ B)(x + jB^{-1}\omega)} dx \\
&= L_N(A\circ B, B^{-1}\omega).
\end{align}
Since $(A\circ B, B^{-1}\omega)$ is still an analytic quasiperiodic cocycle, all of the results from the previous sections apply. The key idea is to now find an appropriate such $B$ so that $B^{-1}$ rearranges the components of $\omega$ into two pieces: one piece consisting of irrational and rationally independent components, and another piece consisting of rationals and rationally dependent components.

The main step towards achieving this is the following theorem.

\begin{mythm}\label{Thm:MainStep}
Let $(\omega_1, \omega_2) \in \T^{d_1}\times \T^{d_2}, d_1 + d_2 = d.$ Let $1 \geq \delta_0 > 0, \epsilon_0 \geq 0,$ and suppose $q_0 \in \N$ and $K_0 \in \N,$ with $q_0 < K_0^{1/10},$ satisfies
\begin{equation}
\norm{q_0\omega_1} \leq \epsilon_0,
\end{equation}
\begin{equation}
\norm{k\omega_2} \geq \delta_0; \quad 0 < |k| \leq K_0.
\end{equation}
Furthermore, suppose $N_0$ is such that
\begin{equation}
\frac12 K_0^2 \delta_0^{-1} \leq N_0 < \epsilon_0^{-1}K_0^{-1}.
\end{equation}
Finally, suppose $\rho > K_0^{-c}.$ Then for $N_0 | N_1, N_1 = 2^jN_0$ for some $j \geq 0,$ and $N_1 \leq \epsilon^{-1}K_0^{-1},$ we have
\begin{equation}\label{eq:FinalConclusion}
|L_{N_0} - L_{N_1}| < K_0^{-c}.
\end{equation}
\end{mythm}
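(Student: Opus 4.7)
The plan is to iterate Theorem~\ref{Thm:Mixed} by applying it at successive base scales starting from $N_0$ and eventually reaching $N_1$. First, I would set $\kappa = K_0^{-\alpha}$ for a small $\alpha > 0$ chosen with $\alpha > 6c$ (here $c$ denotes the constant in the target statement), so that the per-application error $C'\kappa^{1/6} = C'K_0^{-\alpha/6}$ is comfortably smaller than $K_0^{-c}$. With this $\kappa$, verifying the three hypotheses of Theorem~\ref{Thm:Mixed} at base $N_0$ is essentially arithmetic: hypothesis (i) $\norm{q_0\omega_1} < \kappa^{C}\rho^3 q_0/N_0$ follows from $\epsilon_0 N_0 < K_0^{-1}$ (a restatement of $N_0 < \epsilon_0^{-1}K_0^{-1}$), $\rho > K_0^{-c}$, and $q_0 \geq 1$, provided $\alpha C + 3c < 1$; (ii) $K_0 > (\rho^{1+c}\kappa)^{-C}q_0$ follows from $q_0 < K_0^{1/10}$ together with smallness of $\alpha$ and $c$; (iii) $N_0 > \kappa^{-C}\delta_0^{-1}K_0$ follows from $N_0 \geq K_0^2\delta_0^{-1}/2$ and $\alpha C < 1$.

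If $N_1$ lies in the first-application range $N_1 \leq \min\{\kappa^C\rho^3 q_0/\epsilon_0,\, N_0 e^{(K_0/q_0)^c}\}$, one application of Theorem~\ref{Thm:Mixed} directly yields $|L_{N_1} - L_{N_0}| < C'K_0^{-\alpha/6} < K_0^{-c}$. Otherwise, I would construct a chain $N_0 = N_0^{(0)} < N_0^{(1)} < \cdots < N_0^{(m)} = N_1$ of intermediate scales, each a power-of-two multiple of its predecessor with $N_0^{(s+1)}/N_0^{(s)} \leq e^{(K_0/q_0)^c}$, and apply Theorem~\ref{Thm:Mixed} at each base $N_0^{(s)}$. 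The key point is that hypothesis (i) persists at every stage: since $N_0^{(s)} \leq N_1 \leq \epsilon_0^{-1}K_0^{-1}$, we always have $\epsilon_0 N_0^{(s)} \leq K_0^{-1} < \kappa^C\rho^3 q_0$. Telescoping via the triangle inequality then gives $|L_{N_1} - L_{N_0}| \leq m\cdot C'K_0^{-\alpha/6}$.

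The main obstacle is controlling $m \sim \log(N_1/N_0)/(K_0/q_0)^c \leq \log(\epsilon_0^{-1})/K_0^{9c/10}$ when $\epsilon_0$ is extremely small — recall that $\epsilon_0$ is only bounded above by roughly $K_0^{-3}$ but has no lower bound. For $\epsilon_0 \gtrsim e^{-K_0^{\alpha/6 - c/10}}$ the bound $m < K_0^{\alpha/6 - c}/C'$ holds and the total error is $< K_0^{-c}$ as required. For much smaller $\epsilon_0$, i.e.\ when $\omega_1$ is extraordinarily close to a rational vector with denominator $q_0$, the naive iteration count is too large, and I would invoke the toral automorphism mechanism from the paper's outline: apply an appropriate $B \in SL(d,\Z)$ to replace $(A,\omega)$ by $(A\circ B, B^{-1}\omega)$, using the invariance $L_N(A,\omega) = L_N(A\circ B, B^{-1}\omega)$. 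One chooses $B$ so that the effective first-block of $B^{-1}\omega$ has a substantially smaller denominator (or exact rationality in those components), enlarging the usable factor $(K_0/q_0)^c$ in the transformed problem and thereby reducing $m$. Alternating between such automorphisms and Theorem~\ref{Thm:Mixed} until $N_1$ is reached is the delicate bookkeeping step, but at each round the per-application error remains $<K_0^{-c}/m$, yielding $|L_{N_0} - L_{N_1}| < K_0^{-c}$.
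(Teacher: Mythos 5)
Your first paragraph (verifying the hypotheses of Theorem~\ref{Thm:Mixed} at the base scale $N_0$) is essentially correct. However, the iteration scheme you use to reach $N_1$ and the role you assign to the toral automorphism are both off in ways that matter.

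\textbf{Iteration with fixed $K_0$ fails.} In your chain $N_0 = N_0^{(0)} < \cdots < N_0^{(m)} = N_1$ you hold $K_0$, $q_0$, and $\kappa$ fixed, so the per-step error is a constant $C'\kappa^{1/6}$ and the accumulated error is $m \cdot C'\kappa^{1/6}$. Since $\epsilon_0$ has no lower bound, $m \sim \log(N_1/N_0)/(K_0/q_0)^c$ is unbounded, and you acknowledge this but do not actually fix it. The paper's Lemma~\ref{Lem:MainStep} avoids this by increasing the Diophantine scale exponentially at each step: $K_s = K_{s-1}^{20}$, $\delta_s = \min_{0<|k|\leq K_s}\norm{k\cdot\omega_2}$, with new base scales $N_{0,s} \sim K_s^2/\delta_s + N_{0,s-1}$. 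The per-step error then becomes $K_{s-1}^{-c} = K_0^{-c\cdot 20^{s-1}}$, so the sum over $s$ is dominated by a geometric series and is bounded by $CK_0^{-c}$ regardless of $\epsilon_0$. This is not a cosmetic difference; it is the mechanism that makes the accumulated error controllable.

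\textbf{The automorphism does not shrink the denominator of $\omega_1$.} You propose choosing $B\in SL_d(\Z)$ so that the first block of $B^{-1}\omega$ has a ``substantially smaller denominator,'' thereby enlarging $(K_0/q_0)^c$. This is not a move that an $SL_d(\Z)$ change of variables can perform: the quality of rational approximation to $\omega_1$ is intrinsic and cannot be improved by a unimodular integer linear map. The paper's automorphism does something different. When the Diophantine control on $\omega_2$ fails at the enlarged scale $K_1 = K_0^{20}$ (i.e.\ $\delta_1 = \min_{0<|k|\leq K_1}\norm{k\cdot\omega_2}$ is too small to continue), one finds $q' = q_1 n_1 \in \Z^{d_2}$ achieving $\norm{q'\omega_2}=\delta_1$ and uses Lemma~\ref{Lem:CoV} to build $B\in SL_d(\Z)$ that moves this near-resonant direction of $\omega_2$ into the $\omega_1$-block. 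This \emph{increases} the effective $q_0$ (to $q_0 q_1$), \emph{shrinks} the analyticity strip (to $\rho_1 = K_1^{1-d}\rho_0$), and \emph{decreases $d_2$ by one}, so that the induction hypothesis on $d_2$ becomes applicable with the new parameters $(K_2, \delta_2, q_0q_1, \epsilon_1)$. Theorem~\ref{Thm:MainStep} is then proved by this induction on $d_2$, with base case $d_2=0$ handled by Theorem~\ref{Thm:Liouv} and the inductive step alternating between the automorphism reduction, the induction hypothesis at the reduced $d_2$, and Theorem~\ref{Thm:Mixed}. Your sketch omits both the induction on $d_2$ and the shrinking-strip bookkeeping ($\rho_j > K_j^{-c}$), so as written the argument does not close.
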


We will prove this by induction, but before we present the proof, we will provide a lemma which reduces the above result to proving analogous bounds on shorter length scales.

\begin{mylemma}\label{Lem:MainStep}
In addition to the assumptions of Theorem \ref{Thm:MainStep},
suppose, moreover, that 
\begin{equation}
|L_{N_0} - L_{N'}| < K_0^c
\end{equation}
for $N_0 | N', N' = 2^j N_0$ for any $j \geq 0$ such that 
\begin{equation}\label{eq:RestrN1Cond}
N' \leq \min\set{\epsilon_0^{-1}K_0^{-1}, K_0^{40}\left(\min_{0 < |k| \leq K_0^{20}} \norm{k\cdot\omega_2}\right)^{-1} + N_0}.
\end{equation}
Then \eqref{eq:FinalConclusion} holds for $N_0 | N_1, N_1 = 2^jN_0$ for any $j \geq 0$ such that $N_1 \leq \epsilon^{-1}K_0^{-1}.$
\end{mylemma}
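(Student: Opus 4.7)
The plan is to perform a single extension step using Theorem \ref{Thm:Mixed} in a new system of coordinates obtained via a toral automorphism that is tailored to the small quantity $\delta_1 := \min_{0 < |k| \leq K_0^{20}} \norm{k\cdot\omega_2}$ appearing in the secondary bound of \eqref{eq:RestrN1Cond}. If $\epsilon_0^{-1}K_0^{-1}$ is itself the smaller of the two quantities in that min, there is nothing to prove. Otherwise we have $N_0 + K_0^{40}\delta_1^{-1} < \epsilon_0^{-1}K_0^{-1}$, forcing $\delta_1$ to be far smaller than $\delta_0$, and this smallness should be converted into a new ``Liouville direction'' that allows us to move past the cutoff.

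Concretely, I would first select a primitive $k^* \in \Z^{d_2}$ with $0 < |k^*| \leq K_0^{20}$ realizing $\delta_1 = \norm{k^*\cdot\omega_2}$, extend it to a basis of $\Z^{d_2}$, and use this to build $B \in SL(d,\Z)$ which acts as the identity on the $\omega_1$-block and sends $k^*$ to the first basis vector of the $\omega_2$-block. Passing to the conjugated cocycle $(A\circ B, B^{-1}\omega)$, which preserves $L_N$ by the toral-automorphism identity recorded before Theorem \ref{Thm:MainStep}, the transformed frequency acquires a new short coordinate $\tilde\omega_{2,1} = k^*\cdot\omega_2$ with $\norm{\tilde\omega_{2,1}} = \delta_1$. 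I would then regroup the coordinates, treating $\tilde\omega^{(1)} := (\omega_1, \tilde\omega_{2,1})$ as a new Liouville block with common denominator $q_0$ (so that $\norm{q_0\tilde\omega^{(1)}} \leq \epsilon_0 + q_0\delta_1$) and the remaining coordinates $\tilde\omega^{(2)}$ as the new Diophantine block. A standard Bezout argument on the rows of $B$ shows that $\norm{k\cdot\tilde\omega^{(2)}} \gtrsim K_0^{-20d}\delta_0$ for $0 < |k| \leq K_1 := K_0^{10}$, since otherwise we would produce some $k' \in \Z^{d_2}$ of polynomial size in $K_0$ with $\norm{k'\cdot\omega_2} < \delta_1$, contradicting either the minimality of $\delta_1$ or the original Diophantine estimate at scale $K_0$.

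Finally, I would apply Theorem \ref{Thm:Mixed} to the conjugated cocycle with $\kappa = K_0^{-c}$ and initial length a dyadic multiple $N^*$ of $N_0$ just below $K_0^{40}\delta_1^{-1}$. This $N^*$ lies inside the hypothesis window, so $|L_{N_0} - L_{N^*}| < K_0^{-c}$ is already given, and Theorem \ref{Thm:Mixed} extends the estimate up to $\min\{\kappa^C\rho^3 q_0/\norm{q_0\tilde\omega^{(1)}},\, N^* e^{(K_1/q_0)^c}\}$. The first term is of order $\delta_1^{-1}$, which by our case assumption exceeds $\epsilon_0^{-1}K_0^{-1}$, while the second term contributes an exponential factor in a positive power of $K_0$; together they cover the full target range. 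The triangle inequality then yields \eqref{eq:FinalConclusion} up to a harmless adjustment of the exponent $c$. The main technical obstacle is parameter matching: verifying that the new Liouville ratio is small enough against $N^*$ to satisfy \eqref{eq:Mixed:hyp1}, and that the modified Diophantine constant at scale $K_1$ together with the size of $N^*$ secures \eqref{eq:Mixed:hyp2}--\eqref{eq:Mixed:hyp3}. These reduce to elementary polynomial inequalities in $K_0,\epsilon_0,\delta_0,\delta_1$, and should be tractable given $\delta_1 \leq K_0^{41}\epsilon_0$ together with the standing hypotheses of Theorem \ref{Thm:MainStep}, modulo a rounding step that replaces $K_0^{40}/\delta_1$ by the nearest admissible dyadic multiple of $N_0$.
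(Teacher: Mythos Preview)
Your approach has a genuine gap in the Diophantine claim for $\tilde\omega^{(2)}$. After the toral automorphism, $k\cdot\tilde\omega^{(2)}=k'\cdot\omega_2$ with $k'=(B')^T(0,k)$ and $|k'|\lesssim \|B\|\,|k|\leq K_0^{20}\cdot K_0^{10}=K_0^{30}$. The hypotheses of Theorem~\ref{Thm:MainStep} give $\norm{k'\cdot\omega_2}\geq\delta_0$ only for $|k'|\leq K_0$, and the definition of $\delta_1$ gives $\norm{k'\cdot\omega_2}\geq\delta_1$ only for $|k'|\leq K_0^{20}$; for $K_0^{20}<|k'|\leq K_0^{30}$ there is no information whatsoever, so nothing prevents $\tilde\omega^{(2)}$ from having its own tiny denominator at scale $K_1$, destroying \eqref{eq:Mixed:hyp3}. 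There is also a problem with \eqref{eq:Mixed:hyp1}: with common denominator $q_0$ and $N^*\sim K_0^{40}\delta_1^{-1}$ one needs $\epsilon_0+q_0\delta_1\lesssim \kappa^C\rho^3 q_0/N^*\sim K_0^{-41}\rho^3 q_0\delta_1$, and the $q_0\delta_1$ term on the left already makes this impossible. Switching to denominator $q_0q_1$ repairs this but then $q_0q_1$ can be as large as $K_0^{20+1/10}$, which violates \eqref{eq:Mixed:hyp2} for any $K_1$ small enough to keep the Diophantine issue under control.

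The paper's proof of Lemma~\ref{Lem:MainStep} never changes coordinates. The point you are missing is that the restricted hypothesis is meant to hold for \emph{every} admissible parameter tuple $(q_0,\epsilon_0,K_0,\delta_0,N_0)$ satisfying the assumptions of Theorem~\ref{Thm:MainStep} (the lemma is a reduction: ``proving Theorem~\ref{Thm:MainStep} under \eqref{eq:RestrN1Cond} suffices''). The proof exploits this by iterating in the \emph{original} coordinates: set $K_s=K_{s-1}^{20}$, $\delta_s=\min_{0<|k|\leq K_s}\norm{k\cdot\omega_2}$, $N_{0,s}\sim K_s^2\delta_s^{-1}+N_{0,s-1}$, and invoke the restricted hypothesis with $(K_{s-1},\delta_{s-1},N_{0,s-1})$ in place of $(K_0,\delta_0,N_0)$ to get $|L_{N_{0,s-1}}-L_{N_{0,s}}|<K_{s-1}^{-c}$. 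One stops at the first $s_0$ where the second term in \eqref{eq:RestrN1Cond} exceeds $\epsilon_0^{-1}K_{s_0}^{-1}$, then applies Theorem~\ref{Thm:Mixed} once with the original $(\omega_1,\omega_2)$ split and parameters $K_{s_0-1},\delta_{s_0-1}$ to reach $\epsilon_0^{-1}K_0^{-1}$; the errors $\sum_s K_0^{-c\cdot 20^s}$ sum to $O(K_0^{-c})$. The toral automorphism you describe is exactly the device used one level up, inside the induction on $d_2$ in Theorem~\ref{Thm:MainStep}, where the unavoidable loss of Diophantine control on $\tilde\omega^{(2)}$ is absorbed by the inductive hypothesis in dimension $d_2-1$; it has no role in Lemma~\ref{Lem:MainStep} itself.
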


\begin{proof}
Suppose \eqref{eq:FinalConclusion} holds for $N'$ as above. We will construct a sequence of length scales, $N_{0,s},$ and iterate the conclusion in a way that mirrors our proof of Theorem \ref{Thm:Liouv}.

Note that, if 
$$K_0^{40}\left(\min_{0 \leq |k| \leq K_0^{20}} \norm{k\cdot\omega_2}\right)^{-1} + N_0 > \epsilon_0^{-1}K_0^{-1},$$ 
then there is nothing to prove, so we will assume
$$K_0^{40}\left(\min_{0 \leq |k| \leq K_0^{20}} \norm{k\cdot\omega}\right)^{-1} + N_0 \leq \epsilon_0^{-1}K_0^{-1}$$
Set $K_1 = K_0^{20}, \delta_1 = \min_{0 < |k| \leq K_1} \norm{k\cdot\omega_2}.$ Then define
\begin{equation}
N_{0,1} \sim \frac{K_1^2}{\delta_1} + N_0,
\end{equation}
where $\sim$ here means take 
the largest 
multiple of $N_0$ no larger than the right hand side of the form $N_{0,1} = 2^j N_0.$ 
By our assumptions, $N_{0,1}$ satisfies \eqref{eq:RestrN1Cond}, and thus
$$|L_{N_0} - L_{N_{0,1}}| < K_0^{-c}.$$
Moreover, suppose $N_{0,1} < \epsilon_0^{-1}K_1^{-1}.$ We will deal with the case where $N_{0,1} \geq \epsilon_0^{-1}K_1^{-1}$ at the end of the proof using Theorem \ref{Thm:Mixed}.

Now we observe that we may replace $\delta_0, K_0,$ and $N_0$ in the hypothesis of our lemma with $\delta_1, K_1,$ and $N_{0,1},$ respectively. It then follows, by our assumptions, that 
$$|L_{N'} - L_{N_{0,1}}| < K_1^{-c}$$
for all $N'$ such that $N_{0,1} | N'$ and 
$$N' \leq \min\set{\epsilon_0^{-1}K_1^{-1}, K_1^{40}\left(\min_{0 < |k| \leq K_1^{20}} \norm{k\cdot\omega_2}\right)^{-1} + N_{0,1}}.
$$

At this point, two possibilities arise. First, suppose
\begin{equation} \label{eq:InductiveAssump1}
K_1^{40}\left(\min_{0 < |k| \leq K_1^{20}} \norm{k\cdot\omega_2}\right)^{-1} + N_{0,1} \leq \epsilon_0^{-1}K_1^{-20}.
\end{equation}
We will see later that the case where \eqref{eq:InductiveAssump1} fails may be dealt with via Theorem \ref{Thm:Mixed}. In fact, this assumption is analogous to the assumption $N_{0,1} < \epsilon_0^{-1}K_1^{-1}$ we made above. 
Set $K_2 = K_1^{20}, \delta_2 = \min_{0 < |k| \leq K_2}\norm{k\cdot\omega_2}.$ Then define
\begin{equation}
N_{0,2} \sim \frac{K_2^2}{\delta_2} + N_{0,1},
\end{equation}
where $\sim$ here means take 
the largest 
multiple of $N_{0,1}$ of the form $N_{0,2} = 2^j N_{0,1}$ which is no larger than the right hand side. By our assumptions, $N_{0,2}$ satisfies \eqref{eq:RestrN1Cond}, and thus
$$|L_{N_{0,1}} - L_{N_{0,2}}| < K_1^{-c}$$
and
$$|L_{N'} - L_{N_{0,2}}| < K_2^{-c}$$
for all $N'$ such that $N_{0,2} | N'$ and 
$$N' \leq \min\set{\epsilon_0^{-1}K_2^{-1}, K_2^{40}\left(\min_{0 < |k| \leq K_2^{20}} \norm{k\cdot\omega_2}\right)^{-1} + N_{0,2}}.
$$

We continue in this way to define $K_s = K_{s-1}^{20}, \delta_s = \min_{0 < |k| \leq K_s}\norm{k\cdot\omega_2},$ and $N_{0,s}\sim \frac{K_s^2}{\delta_s} + N_{0,s-1}$ for $s \leq s_0,$ where $s_0$ is the first index for which
$$K_{s_0}^{40}\left(\min_{0 < |k| \leq K_{s_0}^{20}} \norm{k\cdot\omega_2}\right)^{-1} + N_{0,s_0} > \epsilon_0^{-1}K_{s_0}^{-1}.$$
By construction, for all $s \leq s_0,$
\begin{equation}
|L_{N_{0,s}} - L_{N_{0,s-1}}| < K_{s-1}^{-c}
\end{equation}
and for $s < s_0,$
\begin{equation}
|L_{N_{0,s}} - L_{N'}| < K_{s-1}^{-c},
\end{equation}
for all $N' = 2^j N_{0,s},$ where 
$$N' \leq \min\set{\epsilon_0^{-1}K_s^{-1}, K_{s+1}^{2}\delta_{s+1} + N_{0,s}}.$$

Now set $N_{0,s_0}' \sim \max\set{N_{0,s_0-1}, \epsilon_0^{-1}K_{s_0}^{-1}},$ where, again, $\sim$ denotes taking the largest value no larger than the right hand side such that $N_{0,s_0-1}|N_{0,s_0}'.$ Thus, by construction,
$$|L_{N_{0,s_0-1}} - L_{N_{0,s_0}}'| < K_{s_0-1}^{-c}.$$

At this point, we may apply Theorem \ref{Thm:Mixed} to the scale $N_{0,s_0}',$ with $K_{s_0-1}$ and $\delta_{s_0-1}.$ Indeed,
\begin{align}
N_{0,s_0}'\epsilon_0 &\leq \max\set{N_{0,s_0-1}\epsilon_0, K_{s_0}^{-1}}\\
&\leq \max\set{K_{s_0-1}^{-1}, K_{s_0}^{-1}}\\
&\leq K_{s_0-1}^{-1}\\
&\leq K_0^{-1}\\
&\leq \kappa^C\rho^4q_0,
\end{align}
where $\kappa = K_0^{-c},$ and $C$ is chosen appropriately large, depending on $c$ we used in the hypothesis of this theorem. Thus \eqref{eq:Mixed:hyp1} holds.
We also have
\begin{align}
K_0 > \rho^{-1/C}\kappa^{-1/C}q_0,
\end{align}
so \eqref{eq:Mixed:hyp2} holds.
Finally, 
\begin{align}
N_{0,s_0}' &\geq N_{0,s_0-1}\\
&\geq K_{s_0-1}^2 \delta_{s_0-1}^{-1}\\
&\geq K_{s_0-1} \delta_{s_0-1}^{-1} \kappa^{-C},
\end{align}
and thus \eqref{eq:Mixed:hyp3} holds. It follows that Theorem \ref{Thm:Mixed} is applicable with our chosen parameters, and we obtain
\begin{equation}
|L_{N_{s_0}} - L_{N'}| < K_0^{-c}
\end{equation}
for all $N'$ of the form $2^j N_{0,s_0}$ such that
$$N' \leq \min\set{\epsilon_0^{-1}K_0^{-1}, \epsilon_0^{-1}}.$$
This now includes all $N' \leq \epsilon_0^{-1} K_0^{-1},$ as desired.

Now we fix any $N' = 2^j N_0.$ Two possibilities arise. First, suppose $N_{0,s} < N_1 \leq N_{0,s+1}$ for some $0 \leq s \leq s_0.$ By construction of $N_{0,s+1},$ we have
$$|L_{N_{0,s}} - L_{N'}| < K_s^{-c},$$
and thus
\begin{align}
|L_{N_0} - L_{N'}| &< |L_{N_0} - L_{N_{0,1}}| + \sum_{j = 1}^{s-1}|L_{N_{0,j}} - L_{N_{0,j+1}}| \\
\quad &+ |L_{N_{0,s}} - L_{N'}|\\
&\leq K_0^{-c} + \sum_{j = 1}^{s-1} K_j^{-c} + K_s^{-c}\\
&= K_0^{-c} + \sum_{j = 1}^{s-1} K_0^{-c20^j} + K_0^{-c20^s}\\
&\leq C K_0^{-c},
\end{align}
where $C$ is an absolute constant.

Now, suppose $N' > N_{0,s_0}.$ As we observed above,
$$|L_{N_{s_0}} - L_{N'}| < K_0^{-c},$$
and the same argument as the previous case yields our desired result.

\end{proof}

At this point, we provide a lemma guaranteeing the existence of the special toral automorphisms we discussed at the beginning of this section.

\begin{mylemma}[\cite{BourgainContinuity} Lemma 4.41] \label{Lem:CoV}Assume $k = (k_1,...,k_d) \in \Z^d$ and $\gcd(k_1,...,k_d) = 1.$ Then there is $A \in SL_d(\Z)$ satisfying
\begin{equation}
A_{1j} = k_j;\quad 1 \leq j \leq d,
\end{equation}
\begin{equation}
|A_{ij}| \leq |k| = \max |k_l|;\quad 1 \leq i,j \leq d.
\end{equation}
\end{mylemma}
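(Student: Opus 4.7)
The proof is by induction on $d$. The case $d = 1$ is trivial since the hypothesis forces $k_1 = \pm 1$ and we take $A = (k_1)$. For $d = 2$ I would argue via a quantitative Bézout identity: assuming without loss of generality $|k_2| \leq |k_1| = |k|$ (and $k_2 \neq 0$, else $k_1 = \pm 1$ and one takes $A = \mathrm{diag}(k_1, 1)$), I pick the unique integer $b \in \{0, 1, \ldots, |k_2|-1\}$ with $k_1 b \equiv 1 \pmod{|k_2|}$, set $a = (k_1 b - 1)/k_2$, and check directly that $k_1 b - k_2 a = 1$ with $|b| < |k_2| \leq |k|$ and $|a| < |k_1| = |k|$. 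The matrix $\left(\begin{smallmatrix} k_1 & k_2 \\ a & b \end{smallmatrix}\right)$ then satisfies both conclusions.

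For the inductive step, fix $d \geq 3$ and set $g = \gcd(k_2, \ldots, k_d)$. Write $k_j = g k_j'$ for $2 \leq j \leq d$, so that $\gcd(k_2', \ldots, k_d') = 1$ and, because $\gcd(k_1, k_2, \ldots, k_d) = 1$, also $\gcd(k_1, g) = 1$. Apply the $d=2$ case to the pair $(k_1, g)$ to obtain integers $a, b$ with $k_1 a + g b = 1$, $|a| \leq g \leq |k|$, and $|b| \leq |k_1| \leq |k|$. Apply the inductive hypothesis in dimension $d - 1$ to $(k_2', \ldots, k_d')$ to obtain $B \in SL_{d-1}(\Z)$ whose first row is $(k_2', \ldots, k_d')$ and whose entries are bounded in absolute value by $\max_j |k_j'| \leq \max_j |k_j| = |k|$.

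I then propose to assemble $A$ as the block matrix
\begin{equation*}
A = \begin{pmatrix} k_1 & g k_2' & g k_3' & \cdots & g k_d' \\ -b & a k_2' & a k_3' & \cdots & a k_d' \\ 0 & B_{2,1} & B_{2,2} & \cdots & B_{2,d-1} \\ \vdots & \vdots & \vdots & & \vdots \\ 0 & B_{d-1,1} & B_{d-1,2} & \cdots & B_{d-1,d-1} \end{pmatrix}.
\end{equation*}
Expanding along the first column and factoring the scalars $a$ and $g$ out of the first rows of the two relevant minors, both minors reduce to $\det(B) = 1$, giving $\det(A) = k_1 a + g b = 1$. The first row is $(k_1, k_2, \ldots, k_d)$ by construction, and the entry bounds are verified entry-by-entry: $|k_1|, |g k_j'| = |k_j|$, and $|b|$ are all at most $|k|$; $|a k_j'| = |a|\cdot|k_j|/g \leq g\cdot|k_j|/g = |k_j| \leq |k|$; and the $B_{ij}$ entries are bounded by $|k|$ by the inductive hypothesis.

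The main point requiring care is the simultaneous control of the Bézout coefficients and the inductive entry bound: the construction works precisely because the factor $k_j'$ absorbs the factor $g$ that bounds $|a|$, keeping the entries $a k_j'$ at the scale $|k_j|$ rather than $g|k_j|$. Once this algebraic identity is in place, the rest of the verification is routine.
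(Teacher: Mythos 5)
The paper does not reproduce a proof of this lemma; it is stated with a citation to Bourgain (\cite{BourgainContinuity}, Lemma~4.41), so there is no in-paper argument to compare against. Bourgain's own proof is an induction of exactly this type, so you are not taking a genuinely different route.

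Your argument is correct in substance. The $d=2$ case, the Bézout bound $|a| \le |k_1|$, $|b| < |k_2|$, and the determinant computation for the block matrix (expansion down the first column gives $\det A = k_1 a + g b$, each minor reducing to $\det B = 1$ after factoring $a$ or $g$ out of its first row) are all fine, and the key observation that $|a\,k_j'| = |a|\cdot |k_j|/g \le |k_j|$ is exactly what makes the entry bound survive the induction.

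Two small points you should tighten. First, the inductive step silently assumes $g = \gcd(k_2,\dots,k_d) \ne 0$; if $k_2 = \cdots = k_d = 0$ then $k_1 = \pm 1$ and you must take $A = \mathrm{diag}(k_1, 1, \dots, 1)$ separately, as you do in $d=2$. Second, the intermediate claim ``$|b| \le |k_1| \le |k|$'' is not literally correct when $k_1 = 0$: then $g = 1$, the Bézout pair is $(a,b) = (0,1)$, and $|b| = 1 > 0 = |k_1|$. The bound that actually matters, $|b| \le |k|$, still holds (since $|k| \ge 1$), but you should state it that way or handle $k_1 = 0$ separately. Neither issue affects the construction.
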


We will now use this in an induction scheme to prove Theorem \ref{Thm:MainStep}.

\begin{proof}[proof of Theorem \ref{Thm:MainStep}]
By Lemma \ref{Lem:MainStep}, it suffices to prove our result for $N_1$ which satisfies \eqref{eq:RestrN1Cond}. We will prove this by induction on $d_2.$ 

First, we consider the base case, $d_2 = 0.$ Set $\kappa = K_0^{-c},$ with $c$ a suitably small absolute constant. Set $C = c^{-1}.$ Then 
$$N_0 \kappa^{C} = N_0 K_0^{-1} > K_0 > q_0^{10} > q_0.$$
Moreover, 
$$N_0 \norm{q_0\omega_1} \leq N_0 \epsilon_0 < K_0^{-1} < \kappa^{C} < \kappa^{C} \rho^4 q_0.$$
Thus Theorem \ref{Thm:Liouv} is applicable, and we obtain
$$|L_{N_0} - L_{N_1}| < K_0^{-c/3}$$
for $$N_1 < K_0^{-1/2} \rho^4 q_0/\norm{q_0\omega_1}.$$ Since $\epsilon^{-1}K_0^{-1} < K_0^{-1/2} \rho^4 q_0 / \norm{q_0\omega_1},$ our conclusion follows.

Now we turn our attention to the inductive step. Assume \eqref{eq:FinalConclusion} holds for $\omega_2 \in \T^{d_2 - 1}$ and $N_1$ satisfying \eqref{eq:RestrN1Cond}. We will show that \eqref{eq:FinalConclusion} is true for $\omega_2 \in \T^{d_2}.$ 

Fix any suitable $\omega_2 \in \T^{d_2}.$ Set $K_1 = K_0^{20}$ and $\delta_1 = \min_{0 < |k| \leq K_1} \norm{k \cdot \omega_2}.$ Note that, if $N_0 \geq K_1^2\delta_1^{-1},$ then any $N_1$ satisfying \eqref{eq:RestrN1Cond} also satisfies $N_1 < 2N_0$ and our result is vacuously true. Thus, we will assume $N_0 < K_1^2\delta_1^{-1}.$

Now, by the definition of $\delta_1,$ there is some $q' \in \Z^{d_2},$ with $0 < |q'| \leq K_1,$ such that the minimum is achieved. That is, we can find $q' = (q_{1}',\dots,q_{d_2}')\in \Z^{d_2},$ with $0 < |q'| \leq K_1,$ such that $\norm{q'\omega_2} = \delta_1.$ Write $q' = q_1\cdot n_1,$ where $q_1 \in \Z\backslash \set{0}, |q_1| \leq K_1,$ and $n_1 = (n_{11},...,n_{1d_2})$ is such that $\gcd(n_{11},...,n_{1d_2}) = 1.$ 

At this point, our goal is to perform a suitable change of variables to reduce our situation to that covered in our induction hypothesis.
We apply Lemma \ref{Lem:CoV} with $k = n_1$ to construct $B \in SL_d(\Z)$ with entries bounded by $K_1$ such that 
\begin{equation}
B(\omega_1,\omega_2) = \omega' = (\omega_1', \omega_2')\in \T^{d_1+1}\times \T^{d_2-1},
\end{equation} 
where 
\begin{equation}
\norm{q_1'\omega_1'} = \delta_1.
\end{equation} 
We may also assume that $B$ fixes the first $d_1$ components of $\omega.$ Moreover, by Cramer's rule, the entries of $B^{-1}$ are all bounded by $K_1^{d-1}.$ 

Now, since $B\in SL_d(\Z),$ we have, for every $N,\omega,$
\begin{align}
L_N(A,\omega) &= \frac 1 N \int_{\T^d} \ln\norm{\prod_{j = N}^1 A(x + j\omega)} dx\\
&= \frac 1 N \int_{\T^d} \ln\norm{\prod_{j = N}^1 AB^{-1}(B x + j B \omega)} dx\\
&= \frac 1 N \int_{\T^d} \ln\norm{\prod_{j = N}^1 AB^{-1}(x + j B \omega)} dx\\
&= L_N(AB^{-1}, B\omega).
\end{align}
Here, the second to last equality is simply a change of variables, $Bx \mapsto u,$ and we take advantage of the fact that $B \in SL_d(\Z).$ Thus, to understand the Lyapunov exponent for the cocycle $(A,\omega),$ we may study the related cocycle $(AB^{-1}, B\omega) = (AB^{-1}, \omega').$ We now want to show that this new cocycle satisfies the induction hypothesis.

Recall that $L_N(A, \omega)$ has a plurisubharmonic extension to $|\Im z_j| < \rho = \rho_0$ which satisfies the conditions necessary to establish the results in the previous sections. It follows that $L_N(AB^{-1}, \omega')$ also has a plurisubharmonci extension to $|\Im z_j| < \norm{B^{-1}}\rho_0$ for which the results of the previous section apply. Since $\norm{B^{-1}} < K_1^{d-1},$ the extension can certainly be restricted to
$$|\Im z_j | < \rho_1 = K_1^{1 - d}\rho_0.$$
Moreover, we know $\rho_0 > K_0^{-c},$ so
\begin{equation}\label{eq:rho1ineq}
\rho_1 > K_0^{-c}K_1^{1 - d} > K_1^{-d}.
\end{equation}

Next, observe that
\begin{align}
\norm{q_0q_1 \omega_1'} &= \norm{q_0q_1 (\omega_1, \tilde\omega)}\\
&= \norm{q_0q_1\omega_1}+  \norm{q_0q_1\tilde\omega}\\
&< q_1\epsilon_0 + q_0\delta_1\\
&=: \epsilon_1.
\end{align}
Moreover,
\begin{equation}\label{eq:epsilon1Kbound}
\epsilon_1 \leq K_1(\epsilon_0 + \delta_1).
\end{equation}

Now, define $K_2 = K_1^{C_1},$ where $C_1 = d/c$ and observe that \eqref{eq:rho1ineq} implies
$$\rho_1 > K_2^{-c}.$$
Finally, define 
\begin{equation}
\delta_2 = \min_{0 < |k| \leq K_2} \norm{k\cdot \omega_2'}.
\end{equation}

At this point there are two possible scenarios. Either
\begin{equation}\label{eq:CaseN0good}
N_0 \geq K_2^2 \delta_2^{-1}
\end{equation}
or
\begin{equation}\label{eq:CaseN0bad}
N_0 < K_2^2\delta_2^{-1}.
\end{equation}
We will consider \eqref{eq:CaseN0good} first.

Our strategy here is to appeal to Theorem \ref{Thm:Mixed}. Suppose \eqref{eq:CaseN0good} holds. If $N_0 \leq \epsilon_1^{-1} K_2^{-1},$ then we may appeal to our induction hypothesis applied to $\omega'$ with $K_0$ and $\delta_0$ replaced by $K_2$ and $\delta_2,$ respectively, to obtain
\begin{equation}\label{eq:NewScaleConcl}
|L_{N_0} - L_{N_{0,2}}| < K_2^{-c} < K_0^{-c},
\end{equation}
for all $N_{0,2}$ such that $N_0 | N_{0,2}$ and $N_{0,2} \leq \epsilon_1^{-1} K_2^{-1}.$ Now set $N_{0,2}$ as close as possible to $\epsilon_1^{-1} K_2^{-1}.$ If $N_0 > \epsilon_1^{-1}K_2^{-1},$ set $N_{0,2} = N_0.$ 

Now that we have the new length scale $N_{0,2},$ we want to apply Theorem \ref{Thm:Mixed} to $\omega'$ with $N_0$ replaced by $N_{0,2}, \delta = \delta_0, K = K_0,$ and $\kappa = K_0^{-c}.$ It remains to verify the hypothesis of Theorem \ref{Thm:Mixed}.

First, if \eqref{eq:Mixed:hyp1} fails, then
$$\epsilon_0 N_{0,2} > \kappa^C \rho^3 > K_0^{-1},$$
and thus $N_{0,2} > \epsilon_0^{-1}K_0^{-1}$ and we have reached our desired scale, in which case \eqref{eq:NewScaleConcl} is our desired conclusion. Thus, we may assume that \eqref{eq:Mixed:hyp1} holds.

Next, recall that $N_{0,2} \geq N_0,$ so 
\begin{align}
\kappa^{-C}\delta_0^{-1}K_0 &= K_0^{1 + cC} \delta_0^{-1}\\
&\leq K_0^2 \delta_0^{-1}\\
&\leq N_0\\
&\leq N_{0,2}
\end{align}
as long as $c \leq 1/C.$ We also have
\begin{align}
(\rho^{1 + c}\kappa)^{-C} q_0 &< (K_0^{-c - c^2}K_0^{-c})^{-C} q_0\\
&= K_0^{(2+c)cC} q_0\\
&< K_0^{(2 + c)cC +1/10}\\
&< K_0
\end{align}
for $c$ sufficiently small (say $c < 1/3C$). Using such a $c$ throughout only changes the exponent of $\kappa$ in the conclusions in the previous sections by an amount proportional to the change we make to $c$ here. It follows that \eqref{eq:Mixed:hyp2} and \eqref{eq:Mixed:hyp3} hold. Theorem \ref{Thm:Mixed} is thus applicable and we obtain
\begin{equation}
|L_{N_1} - L_{N_{0,2}}| < K_0^{-c},
\end{equation}
and thus
\begin{equation}
|L_{N_0} - L_{N_1}| < 2 K_0^{-c},
\end{equation}
for all $N_{0,2} | N_1$ such that 
$$N_1 < \min\set{\kappa^C\frac{\rho^3q_0}{\norm{q_0\omega_1}}, N_{0,2}e^{(K_0/q_0)^c}}.$$
Note that 
$$\kappa^C\frac{\rho^3q_0}{\norm{q_0\omega_1}} > K_0^{-1}\epsilon_0^{-1}$$
and, using \eqref{eq:epsilon1Kbound},
\begin{align}
N_{0,2}e^{(K_0/q_0)^c} &> N_{0,2} e^{K_0^{c 9/10}}\\
&\gtrsim  K_1^{-1}\epsilon_1^{-1} e^{K_0^{c 9/10}} \\
&= K_0^{-20C_1}\frac{1}{q_1\epsilon_0 + q_0 \delta_1} e^{K_0^{c 9/10}}\\
&> 2K_0^{40}(\delta_1+ \epsilon_0)^{-1}\\
&= 2K_1^2(\delta_1 + \epsilon_0)^{-1}\label{eq:187}
\end{align}
If $\delta_1 < \epsilon_0,$ then the right hand side is no less than $K_1^{2}\epsilon_0^{-1} > \epsilon_0^{-1}K_0^{-1},$ and we have reached our desired scale. On the other hand, if $\delta_1 > \epsilon_0,$ then the right hand side is no less that $K_1^2\delta_1^{-1}.$ If
$$K_1^2\delta_1^{-1} > \epsilon_0^{-1}K_0^{-1},$$ 
then we have reached our desired scale length, and we are done. On the other hand, if 
$$K_1^2\delta_1^{-1} < \epsilon_0^{-1}K_0^{-1},$$
then we may repeat our entire argument above with $N_0$ replaced by $N_1 \sim K_1^2\delta_1^{-1}.$ This puts us in the situation where our conclusion is vacuously true, as described at the start of this proof. Thus, either situation leads to our desired conclusion. 

It now remains to consider the case \eqref{eq:CaseN0bad}. We may perform another change of variables by applying another suitable matrix, $B_1 \in SL_d(\Z),$ with entries bounded by $K_2$ so that
\begin{equation}
B_1\omega' = (\omega_1'',\omega_2'') \in \T^{d_1 + 2}\times \T^{d_2 - 2}
\end{equation}
and
\begin{equation}
\norm{q_2\omega_1''} = \delta_2
\end{equation}
for some $q_2 \in \N, q_2 \leq K_2.$ This, as with the first change of variables, decreases the width of the strip for which we have a suitable subharmonic extension to
$$\rho_2 = \rho_1K_2^{1 - d} > K_2^{-d}.$$
We now define $K_3 = K_2^{C_1}$ so that $\rho_2 > K_3^{-c},$ and set
$$\delta_3 = \min_{0 < |k| \leq K_3} \norm{k\cdot\omega_2''}.$$
Now, we are once again in a situation where one of two things must hold. Either
\begin{equation}
N_0 \geq K_3^2/\delta_3
\end{equation}
or
\begin{equation}
N_0 < K_3^2/\delta_3.
\end{equation}
We assume $N_0 \geq K_3^2/\delta_3.$ Indeed, if not, we will perform another change of variables as above. We now, as before, assume $N_0 < \epsilon_2^{-1} K_3^{-1}$ and apply our induction hypothesis to $(\omega_1'',\omega_2'')$ with $K_0$ replaced with $K_3,$ $\delta_0$ replaced by $\delta_3,$ $q_0$ replaced by $q_0q_1q_2,$ and $\epsilon_0$ replaced by 
\begin{align}
\epsilon_2 &= \norm{q_0q_1q_2\omega_1''} \\
&\leq q_2\epsilon_1 + q_0q_1\delta_2\\
&\leq q_2(q_0\delta_1 + q_1\epsilon_0) + q_0q_1\delta_2\\
&\leq K_2(\delta_1 + \delta_2 + \epsilon_0).
\end{align} 
Thus 
\begin{equation}
|L_{N_0} - L_{N_{0,3}}| < K_3^{-c} < K_0^{-c}
\end{equation}
for all $N_{0,3}$ such that $N_0 | N_{0,3}$ and $N_{0,3} < \epsilon_2^{-1}K_3^{-1}.$ Now fix $N_{0,3}$ as close as possible to $\epsilon_2^{-1}K_3^{-1}.$ If $N_{0} > \epsilon_2^{-1}K_3^{-1},$ then we set $N_{0,3} = N_0.$

Now either $\epsilon_0 N_{0,3} > K_0^{-1},$ in which case we have reached our desired scale and there is nothing else to do, or we may apply Theorem \ref{Thm:Mixed}, the hypotheses of which hold using the same argument as we used for $N_{0,2}.$ In the latter case, we obtain
\begin{align}
|L_N - L_{N_{0,3}}| &< K_0^{-c}\\
|L_N - L_{N_0}| &< K_0^{-c}
\end{align}
for all $N_{0,3} | N$ such that
$$N < \min\set{\epsilon_0^{-1}K_0^{-1}, N_{0,3}e^{K_0^c}}.$$
By our choice of $N_{0,3},$ we have
$$N_{0,3}e^{K_0^c} > K_2^{4}\epsilon_2^{-1} > \frac{K_2^2}{\epsilon_0 + \delta_1 + \delta_2}.$$
If $\epsilon_0 + \delta_1 > \delta_2,$ then 
$$N_{0,3}e^{K_0^c} > \frac{K_2^2}{2\epsilon_0 + 2\delta_1},$$ and we have reached the desired scale: either $\epsilon_0 < \delta_1,$ in which case the right hand side is no less than $K_1^2\delta_1^{-1} + N_0,$ or $\epsilon_0 > \delta_1,$ in which case the right hand side is no less than $\epsilon_0^{-1}K_0^{-1}.$ On the other hand, if $\epsilon_0 + \delta_1 \leq \delta_2,$ then
$$N_{0,3}e^{K_0^c} > \frac{K_2^2}{2\delta_2}.$$ In this case, we can repeat all of the preceding using $N_{0,3}$ instead of $N_0.$ Since $N_{0,3} \geq \frac{K_2^2}{2\delta_2},$ we are in the first scenario we considered, and our conclusion follows. 

If $N_0 < K_3^2/\delta_3,$ then, as remarked above, we may perform another change of variables to define $K_4, \delta_4$ and we may repeat the above procedure. Suppose, therefore, that for some $2 \leq j \leq d_2 + 1,$ we may perform $j$ changes of variables as above and obtain $N_0 \geq K_j^2/\delta_j.$ We may set $\delta_{d_2 + 1} = 1.$ The above procedure allows us to define a scale $N_{0,j}$ such that
$$|L_{N_0} - L_{N_{0,j}}| < K_0^{-c}$$
and
$$|L_{N_{0,j}} - L_N| < K_0^{-c}$$
for $N_{0,j} | N$ such that
$$N < \min\set{\epsilon_0^{-1}K_0^{-1}, N_{0,j}e^{K_0^c}}.$$
Either $N$ has reached the desired scale, in which case we are done, or $N$ satisfies $N \geq K_{j - 1}^2/\delta_{j - 1}.$ We may now repeat the entire procedure starting at scale $N$ instead of $N_0,$ and we will reach our desired scale after at most $d_2$ iterations of this argument. 

Finally, we must consider the case where, no matter how many changes of variables we use, we are never in the case where $N_0 \geq K_j^2/\delta_j,$ where $K_j$ and $\delta_j$ are defined inductively as above for $d_2 \geq j \geq 2,$ and $\delta_{d_2 + 1} = 1.$ In this case, we may apply Theorem \ref{Thm:Mixed} with $N_0, \delta_0,$ and $K_0.$ This leads to 
$$|L_{N_0} - L_{N_0'}| < K_0^{-c}$$ 
for $N_0 | N_0'$ and $$N_0' \leq \min\set{\epsilon_0^{-1}K_0^{-1}, N_0e^{(K_0/q_0)^{c}}}.$$ If $N_0e^{(K_0/q_0)^{c}} > \epsilon_0^{-1}K_0^{-1},$ then we have reached our desired scale and we are done. Otherwise, set $N_0' \sim N_0e^{(K_0/q_0)^{c}}.$ At this point, we will suppose that $K_0 > K'(d_2)$ is large enough such that
$$N_0e^{(K_0/q_0)^{c}} > K_{d_2 + 1}^2.$$ 
With this in hand, we may repeat the entire argument starting at scale $N_0',$ and know that we are guaranteed to satisfy $N_0 \geq K_j^2/\delta_j$ for some $2 \leq j \leq d_2 + 1.$

This completes our induction argument.

\end{proof}





We may now prove Lemma \ref{Lem:FinalLemma}.
\begin{proof}[Proof of Lemma \ref{Lem:FinalLemma}]
Apply Theorem \ref{Thm:MainStep} with $\epsilon_0 = 0, \delta_0 = \delta, K_0= K,$ and $N_0 = N.$ This yields
$$|L_N - L_{N'}| < K^{-c}$$ for all $N | N', N' < \infty.$ Taking a limit, $N' \to \infty,$ completes the proof.
\end{proof}

\section{Proof of Continuity}\label{Section:ContArg}
We are now in a position to prove continuity of $L.$

\begin{mylemma}
Suppose $\omega = (\omega_1,...,\omega_d) \in \T^d.$ Let $(A, \omega)$ be an analytic quasiperiodic $M(2,\C)$-cocycle which has an analytic extension to the strip $|\Im(z_j)| < \rho.$ Suppose, moreover, that $\det(A) \not\equiv 0.$ Then $L(A,\omega)$ is jointly continuous in $A$ and $\omega$ for $\omega$ such that $k \cdot \omega \ne 0$ for any $k \in \Z^d\backslash\set{0}.$
\end{mylemma}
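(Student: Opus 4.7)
My plan is to control $|L(A,\omega) - L(A_0,\omega_0)|$ by inserting a fixed finite-scale Lyapunov exponent $L_{N_0}$ as an intermediate object and applying the triangle inequality. I will choose $N_0$ large enough (depending on a desired accuracy $\epsilon > 0$) so that $|L_{N_0}(\cdot,\cdot) - L(\cdot,\cdot)| < \epsilon/3$ holds \emph{uniformly} for $(A,\omega)$ in a neighborhood of $(A_0,\omega_0)$, using Lemma \ref{Lem:FinalLemma}; joint continuity of $L_{N_0}$ at that single fixed scale (Section \ref{Section:FiniteScaleCont}) then handles the remaining piece.

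To make Lemma \ref{Lem:FinalLemma} applicable, I first promote the hypothesis $k\cdot\omega_0 \neq 0$ (for every $0 \neq k\in\Z^d$) to a uniform finite-scale Diophantine condition near $\omega_0$. For each $K\in\N$ the finite minimum $\delta_K := \min_{0<|k|\leq K}\|k\cdot\omega_0\|_{\T}$ is strictly positive, and the trivial estimate $|k\cdot\omega - k\cdot\omega_0|\leq K\|\omega-\omega_0\|$ gives $\|k\cdot\omega\|_{\T} > \delta_K/2$ for every $0<|k|\leq K$ once $\|\omega - \omega_0\| < \eta_0 := \delta_K/(2K)$. Fixing $K$ so large that $K^{-c} < \epsilon/3$ (with $c$ the constant from Theorem \ref{Thm:MainStep}), Lemma \ref{Lem:FinalLemma} applies in the trivial splitting $d_1 = 0$, $d_2 = d$ (with $q_0 = 1$ and $\omega_2 = \omega$), giving
\[
|L_{N_0}(A,\omega) - L(A,\omega)| < \epsilon/3
\]
for any $N_0 > 2K^2/\delta_K$ and any $(A,\omega)$ with $\|A - A_0\|_\rho < \eta_1$ and $\|\omega-\omega_0\| < \eta_0$, for some $\eta_1 > 0$. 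The uniformity in $A$ here is exactly what the ``$\delta = \delta(A)$, $C = C(A)$'' formulations of Lemmas \ref{Lem:UniformL2}, \ref{Lem:CDTVar}, and \ref{Lem:UnifLoj} (feeding into the proof of Theorem \ref{Thm:MainStep}, which in turn drives Lemma \ref{Lem:FinalLemma}) are designed to provide.

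With $N_0$ now fixed, the finite-scale continuity result of Section \ref{Section:FiniteScaleCont} yields $\eta_2 > 0$ with $|L_{N_0}(A,\omega) - L_{N_0}(A_0,\omega)| < \epsilon/3$ whenever $\|A - A_0\|_\rho < \eta_2$, and continuity of $\omega \mapsto L_{N_0}(A_0,\omega)$ (immediate for fixed $A_0$ and fixed $N_0$ by dominated convergence, using the $L^2$-integrability of $\ln\|A_{0,N_0}(\cdot,\omega)\|$ guaranteed by Lemmas \ref{Lem:UniformL2}--\ref{Lem:UnifLoj}) yields $\eta_3 > 0$ with $|L_{N_0}(A_0,\omega) - L_{N_0}(A_0,\omega_0)| < \epsilon/3$ whenever $\|\omega - \omega_0\| < \eta_3$. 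Setting $\eta := \min(\eta_0, \eta_1, \eta_2, \eta_3)$ and decomposing
\[
L(A,\omega) - L(A_0,\omega_0) = \big[L - L_{N_0}\big](A,\omega) + \big[L_{N_0}(A,\omega) - L_{N_0}(A_0,\omega_0)\big] + \big[L_{N_0} - L\big](A_0,\omega_0),
\]
each summand is controlled by $\epsilon/3$ and the triangle inequality closes the argument.

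The central obstacle is the uniformity required in the second paragraph: a \emph{single} scale $N_0$ must work simultaneously for all $(A,\omega)$ in a neighborhood of $(A_0,\omega_0)$. This forces reliance both on the uniform-in-$A$ versions of the underlying estimates from Section \ref{Section:SHEstimates} and on persistence of the finite-scale Diophantine bound in $\omega$. Neither is individually difficult once the earlier machinery is in hand, but together they explain why the sharper hypothesis $k\cdot\omega_0 \neq 0$ for \emph{every} nonzero $k$ (rather than merely irrationality of a single component) is precisely the right condition to impose for joint continuity to go through.
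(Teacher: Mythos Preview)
Your proposal is correct and follows essentially the same approach as the paper's own proof: fix $K$ large so that $K^{-c}$ is small, use the persistence of the finite-scale Diophantine bound $\|k\cdot\omega\| > \delta_K/2$ for nearby $\omega$, apply Lemma~\ref{Lem:FinalLemma} with $d_1=0$ to get $|L_{N_0}-L|$ small uniformly in a neighborhood, invoke finite-scale continuity of $L_{N_0}$, and close with the triangle inequality. The only cosmetic difference is that you split the finite-scale continuity step into separate $A$- and $\omega$-pieces, whereas the paper treats it as a single joint-continuity statement.
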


\begin{proof}
Fix a cocycle $(A,\omega),$ where $\omega = (\omega_1,...,\omega_d)$ is such that $\norm{k\cdot\omega} \ne 0$ for all $k \in \Z^d, |k| \ne 0.$ Fix $\kappa > 0$ and let $K_0$ be large enough such that $K_0^{-c} < \kappa.$ Set $\delta_0 = \min_{0 < |k| \leq K_0}\set{\norm{k\cdot\omega}} > 0$ and take $N > K_0^2/\delta_0.$ We have, by Lemma \ref{Lem:FinalLemma} with $d_1 = 0,$
$$|L_N(A,\omega) - L(A,\omega)| < C(A)K_0^{-c} < C(A)\kappa.$$
Moreover, for fixed $N,$ we know $L_N(A,\omega)$ is jointly continuous in $A$ and $\omega,$ so for any cocycle $(B,\omega')$ such that $\norm{A - B}$ and $\norm{\omega - \omega'}$ are sufficiently small, we have
$$|L_N(A,\omega) - L_N(B,\omega')| <  \kappa.$$
Finally, for $\omega'$ sufficiently close to $\omega,$ we have $\norm{k\cdot \omega'} > \frac12\delta_0$ for $0 < |k| \leq K_0.$ Thus
$$|L_N(B,\omega') - L(B,\omega')| <  C(A)\kappa.$$ Here we have $C(A)$ by taking $B$ sufficiently close to $A.$
Triangle inequality now yields our conclusion.
\end{proof}

\begin{mylemma}
Suppose $\omega = (\omega_1,...,\omega_d) \in \T^d.$ Let $(A, \omega)$ be an analytic quasiperiodic $M(2,\C)$-cocycle which has an analytic extension to the strip $|\Im(z_j)| < \rho.$ Suppose, moreover, that $\det(A) \not\equiv 0.$ Then $L(A,\omega)$ is continuous in $A$ for any $\omega \in \T^d.$
\end{mylemma}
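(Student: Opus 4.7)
The plan is to reduce, via a toral automorphism, to the setting of Lemma \ref{Lem:FinalLemma} with a suitable split $(\omega_1', \omega_2')$ of the frequency, and then combine the resulting uniform estimate with the finite-scale continuity established in Section \ref{Section:FiniteScaleCont}. First, I would fix $\omega = (\omega_1,\dots,\omega_d) \in \T^d$ and consider the sublattice $\Lambda = \set{k \in \Z^d : k \cdot \omega \in \Z}$ of some rank $d_1 \in \set{0,1,\dots,d}$; set $d_2 = d - d_1$. If $d_1 = 0$, the previous lemma already gives joint continuity and there is nothing to prove. Otherwise, iteratively applying Lemma \ref{Lem:CoV} (exactly as in the proof of Theorem \ref{Thm:MainStep}) produces $B \in SL_d(\Z)$ whose first $d_1$ rows form a $\Z$-basis of $\Lambda$. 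Then $B\omega = (\omega_1', \omega_2') \in \T^{d_1} \times \T^{d_2}$ with $\omega_1' \in (\Q/\Z)^{d_1}$ (so some $q_0 \in \N$ satisfies $q_0 \omega_1' \equiv 0$) and $\norm{k \cdot \omega_2'} > 0$ for every nonzero $k \in \Z^{d_2}$.

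Using the identity $L_N(A, \omega) = L_N(A \circ B^{-1}, B\omega)$ already exploited in Section \ref{Section:ContLemma}, the problem reduces to continuity of $A' \mapsto L(A', \omega')$ at $A' := A \circ B^{-1}$, which is analytic on the thinner strip $|\Im z_j| < \rho' := \norm{B^{-1}}^{-1}\rho > 0$. Next, I would fix $\kappa > 0$, choose $K_0$ large enough that $K_0^{-c} < \kappa$ and $q_0 < K_0^{1/10}$, set $\delta_0 = \min_{0 < |k| \leq K_0} \norm{k \cdot \omega_2'} > 0$, and take any $N > K_0^2/\delta_0$. Lemma \ref{Lem:FinalLemma} applied to $(A', \omega')$ with the split $(\omega_1', \omega_2')$ then yields
\begin{equation*}
|L_N(A', \omega') - L(A', \omega')| < C(A')\kappa,
\end{equation*}
where $C(A')$ is uniform over a $\norm{\cdot}_{\rho'}$-neighborhood of $A'$ (the uniformity is inherited from Lemmas \ref{Lem:UniformL2}, \ref{Lem:CDTVar}, and \ref{Lem:UnifLoj} through Theorem \ref{Thm:MainStep}). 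The same bound then applies to every $D$ in a small enough $\norm{\cdot}_{\rho'}$-neighborhood of $A'$.

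For this fixed $N$, the finite-scale continuity lemma of Section \ref{Section:FiniteScaleCont} gives $|L_N(A', \omega') - L_N(D, \omega')| < \kappa$ whenever $D$ is sufficiently close to $A'$ in $\norm{\cdot}_{\rho'}$. Applying the displayed bound to both $A'$ and $D$, a triangle-inequality argument then yields $|L(A', \omega') - L(D, \omega')| \lesssim \kappa$; translating back through the change of variables (a bounded linear map $C_\rho \to C_{\rho'}$) gives the desired estimate $|L(A, \omega) - L(C, \omega)| \lesssim \kappa$ for all $C$ in a $\norm{\cdot}_\rho$-neighborhood of $A$.

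The hard part will be confirming that the constant $C(A')$ in Lemma \ref{Lem:FinalLemma} is genuinely uniform in a $C_{\rho'}$-neighborhood, rather than degenerating under perturbation of $A'$: this is a bookkeeping issue that ultimately traces back through Theorem \ref{Thm:MainStep} and Theorem \ref{Thm:Mixed} to the uniform $L^2$ estimate of Lemma \ref{Lem:UniformL2} and the uniform Lojasiewicz inequality Lemma \ref{Lem:UnifLoj}, both of which are explicitly uniform on $C_{\rho'}$-neighborhoods. The construction of the toral automorphism $B$ is a direct adaptation of the procedure already used in Theorem \ref{Thm:MainStep} and introduces no genuinely new difficulty beyond choosing a $\Z$-basis of $\Lambda$ and extending it to a basis of $\Z^d$.
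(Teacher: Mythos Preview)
Your proposal is correct and follows essentially the same approach as the paper: reduce via a toral automorphism to the split situation of Lemma~\ref{Lem:FinalLemma}, then combine that uniform estimate with finite-scale continuity and the triangle inequality. Your treatment is in fact more explicit than the paper's own proof---you spell out the lattice $\Lambda$, the uniformity of $C(A')$ over a $C_{\rho'}$-neighborhood, and the passage back through the change of variables---whereas the paper compresses the final step to a single sentence.
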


\begin{proof}
Fix $\omega \in \T^d.$ If $k\cdot \omega \ne 0$ for any $k\in \Z^d\backslash\set{0},$ then continuity in $A$ follows from joint continuity for at such $\omega.$ Thus, it suffices to suppose $k\cdot \omega = 0$ for some $k\in \Z^d\backslash \set{0}.$ 
First, we claim that we may assume that $\omega = (\omega_1,\omega_2) \in \T^{d_1}\times\T^{d_2}$ is such that $\norm{q\omega_1} = 0$ for some $q\in \N$ and $\norm{k\cdot\omega_2} \ne 0$ for all $k\in \Z^{d_2}, |k| \ne 0.$ 
Indeed, suppose $\norm{k\cdot\omega} = 0$ for some $|k| \ne 0.$ We may perform a change of variables, $B_1,$ so that $$B_1(\omega) = (\omega_1,\omega_2) \in \T\times\T^{d - 1},$$ where $\omega_1 \in \Q.$ If $\omega_2 \in \T^{d-1}$ is such that $\norm{k'\omega_2} = 0$ for some $|k'| \ne 0,$ then we may perform another change of variables, $B_2,$ such that $$B_2(\omega_1,\omega_2) = (\omega_1',\omega_2')\in \T^2\times\T^{d-2},$$ where, for some $q, \norm{q\omega_1'} = 0.$ 
We may thus perform consecutive changes of variables until we reach $\omega' = (\omega_1',\omega_2') \in \T^{d_1}\times\T^{d_2}$ where $\norm{q\omega_1'} = 0$ for some $q\in \N$ and $\norm{k\cdot\omega_2'} \ne 0$ for all $k\in \Z^{d_2}, |k| \ne 0.$ Since a change of variables will not change the regularity of the Lyapunov exponent, this proves our reduction claim. 

Now, assuming $$\omega = (\omega_1,\omega_2) \in \T^{d_1}\times\T^{d_2}$$ is such that $\norm{q\omega_1} = 0$ for some $q\in \N$ and $\norm{k\cdot\omega_2} \ne 0$ for all $k\in \Z^{d_2}, |k| \ne 0,$ our conclusion follows from Lemma \ref{Lem:FinalLemma}, continuity of $L_N$ for fixed $N,$ and triangle inequality.
\end{proof}

\appendix

\section{Plurisubharmonic function facts and estimates: the proofs} \label{Appendix}

When $d = 1,$ Fourier coefficient decay follows from an application of the following result from \cite{DuarteKleinBook}.

\begin{mylemma}[\cite{DuarteKleinBook} Lemma 6.7]\label{Lem:OneFreqFourieru}
Suppose $u: \T \to \R$ is a subharmonic function with a subharmonic extension to $|\Im z| < \rho$ such that 
\begin{equation}
\sup_{|\Im z| < \rho/4} u(z) + \norm{u}_{L^2} \leq C.
\end{equation}
Then there exists a constant $C',$ dependent only on $C$ and $\rho,$ such that
\begin{equation}
|\hat u (k) | < C('|k| + 1)^{-1}.
\end{equation}
\end{mylemma}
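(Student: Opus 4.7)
The plan is to apply the Riesz representation theorem for subharmonic functions on the strip, reducing the Fourier coefficient estimate to (i) an explicit computation for a logarithmic potential and (ii) exponential decay for bounded harmonic functions, with the nontrivial input being a uniform bound on the Riesz measure.

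First I would establish the key technical estimate: the Riesz measure $\mu = \frac{1}{2\pi}\Delta u$ on $|\Im z| < \rho/8$ has total mass bounded by some $C'' = C''(C,\rho)$. The input for this is the Poisson--Jensen formula: for $x_0 \in \T$ and an appropriate $r \in (\rho/8, \rho/4)$,
\begin{equation}
u(x_0) + \int \log\frac{r}{|x_0 - w|}\, d\mu(w) \leq \frac{1}{2\pi}\int_0^{2\pi} u(x_0 + r e^{i\theta})\, d\theta \leq C.
\end{equation}
Integrating over $x_0 \in \T$ and using $\|u\|_{L^1(\T)} \leq \|u\|_{L^2(\T)} \leq C$ gives a lower bound on the left side and hence $\mu(\{|\Im z| < \rho/8\}) \leq C''$ after the kernel integration. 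This is the only place the $L^2$ bound is used.

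Second, using the periodic Riesz decomposition on $|\Im z| < \rho/16$, one writes
\begin{equation}
u(z) = \int \log|\sin(\pi(z-w))|\, d\mu(w) + h(z),
\end{equation}
where $h$ is harmonic on this strip. The mass bound on $\mu$ together with $u \leq C$ gives a uniform upper bound for $h$, and then the mean value plus $L^2$ control gives a two-sided bound $|h| \leq C'''$ on $|\Im z| < \rho/32$. Next I would compute Fourier coefficients explicitly. The classical expansion
\begin{equation}
\log|\sin(\pi(x - (u+iv)))| = -\log 2 + \pi|v| - \sum_{k=1}^\infty \frac{e^{-2\pi k |v|}}{k}\cos(2\pi k(x - u))
\end{equation}
yields, for $k \neq 0$ and $w = u+iv$, a Fourier coefficient bounded in modulus by $\tfrac{1}{2|k|}$. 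Integrating against $\mu$ and interchanging integrals gives $|\widehat{V}(k)| \leq C''/(2|k|)$ for the potential part. For the harmonic part, boundedness on a strip forces the exponential decay $|\hat h(k)| \leq 2C''' e^{-c\rho|k|}$ via the standard two-sided Fourier series representation of periodic harmonic functions on a strip. Adding these two estimates and handling $k=0$ trivially via $|\hat u(0)| \leq \|u\|_{L^2}$ gives $|\hat u(k)| \leq C'(|k|+1)^{-1}$.

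The hardest and most substantive step is the first: obtaining the uniform Riesz mass bound, because this is where both hypotheses (upper bound and $L^2$ control) are genuinely needed and where the dependence on $\rho$ enters. Once the mass bound is in hand, the remaining steps are essentially bookkeeping: the potential estimate uses only an explicit formula, and the harmonic estimate is classical. A minor care point is the interplay of the several nested strips ($\rho$, $\rho/4$, $\rho/8$, $\rho/16$, $\rho/32$), chosen so that the Poisson--Jensen integration, the Riesz decomposition, and the harmonic extension argument can each be run on a slightly smaller strip than the one on which their input bound is assumed.
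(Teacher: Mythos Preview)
The paper does not supply a proof of this lemma; it is quoted directly from \cite{DuarteKleinBook} (Lemma~6.7) and used as a black box in the appendix. So there is no in-paper argument to compare against.

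Your proposed argument is correct and is in fact the standard proof, essentially the one given in the cited reference and going back to Bourgain. The Riesz decomposition on the strip into a periodic logarithmic potential plus a bounded harmonic remainder is exactly the right structure: the explicit expansion of $\log|\sin(\pi(x-w))|$ gives the $1/|k|$ decay with constant the total Riesz mass, and periodic bounded harmonic functions on a strip have exponentially decaying Fourier coefficients by the elementary ODE for each mode. Your identification of the Riesz mass bound as the substantive step is accurate; the Jensen-type inequality combined with integration in $x_0$ and the $L^1$ control (via $L^2$) is the standard way to extract it. One small point of care: when you integrate $\mu(D(x_0,r))$ over $x_0\in\T$ via Fubini, the resulting lower bound on $\int \mathrm{length}\{x_0:|x_0-w|<r\}\,d\mu(w)$ degenerates as $|\Im w|\to r$, so the strip on which you actually control the mass is a fixed fraction smaller than the one you wrote; this only shifts your nested strip widths and does not affect the argument.
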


The multifrequency estimate follows from the 1-frequency estimate.

\begin{mylemma}
Suppose $u: \T^d \to \R$ is a plurisubharmonic function with a plurisubharmonic extension to $|\Im z_j| < \rho$ such that
\begin{equation}
\sup_{|\Im z_j| < \rho/4} u(z) + \max_{1 \leq j \leq d} \sup_{x_i \in \T, i \ne j} \norm{u(x_1,...,x_j,...,x_d)}_{L^2(dx_j)} < C.
\end{equation}
Then there exists a constant $C',$ dependent only on $C$ and $\rho,$ such that
\begin{equation}
\sum_{|k| > K_0} |\hat u (k)|^2 \leq C' K_0^{-1}.
\end{equation}
\end{mylemma}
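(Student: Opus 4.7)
The plan is to bootstrap the one-variable Fourier decay estimate of Lemma \ref{Lem:OneFreqFourieru} by applying it in each coordinate direction to the slices of $u$, and then to combine the resulting estimates with Parseval's identity in the remaining variables. Since $u$ is plurisubharmonic, for each $j \in \set{1,\ldots,d}$ and each fixed choice of the other coordinates $(x_i)_{i\ne j}$, the slice $x_j \mapsto u(x_1,\ldots,x_j,\ldots,x_d)$ is a subharmonic function on $\T$ admitting a subharmonic extension to $|\Im z_j| < \rho$. The supremum bound on the full $d$-dimensional strip restricts to give the same supremum bound on every such slice, and the hypothesis explicitly gives the uniform $L^2(dx_j)$ bound on slices required by Lemma \ref{Lem:OneFreqFourieru}. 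Hence the one-variable result applies with a constant $C'=C'(C,\rho)$ that is uniform in the frozen variables.

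Next, I would denote by
\begin{equation}
v_j((x_i)_{i\ne j};k_j) = \integral{\T}{u(x_1,\ldots,x_j,\ldots,x_d) e^{-2\pi i k_j x_j}}{x_j}
\end{equation}
the partial Fourier coefficient of $u$ in the $j$-th variable. The pointwise bound from Lemma \ref{Lem:OneFreqFourieru} then reads $|v_j((x_i)_{i\ne j};k_j)| \leq C'(|k_j|+1)^{-1}$ uniformly in the remaining variables. Applying Parseval in the other $d-1$ coordinates gives
\begin{equation}
\sum_{k \in \Z^d : k_j = m} |\hat u(k)|^2 = \integral{\T^{d-1}}{|v_j((x_i)_{i\ne j};m)|^2}{\prod_{i\ne j} x_i} \leq (C')^2(|m|+1)^{-2},
\end{equation}
and summing this geometric-type estimate over $|m| > K_0$ yields $\sum_{|k_j| > K_0} |\hat u(k)|^2 \lesssim K_0^{-1}$.

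Finally, using the convention $|k| = \max_j |k_j|$ (or any equivalent norm on $\Z^d$), the event $|k| > K_0$ implies $|k_j| > K_0$ for at least one $j$, so a union bound over $j$ gives
\begin{equation}
\sum_{|k| > K_0} |\hat u(k)|^2 \leq \sum_{j=1}^d \sum_{|k_j| > K_0} |\hat u(k)|^2 \leq d\,(C')^2 \sum_{|m|>K_0}(|m|+1)^{-2} \leq C'' K_0^{-1},
\end{equation}
which is the desired bound with $C'' = C''(C,\rho,d)$. The argument is essentially bookkeeping on top of the one-variable lemma; the only point requiring care is verifying that the constant in Lemma \ref{Lem:OneFreqFourieru} can be taken uniform over the frozen coordinates, which is immediate from the way the hypothesis is stated (both the sup bound and the slice-wise $L^2$ bound are uniform). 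There is no substantive obstacle to surmount.
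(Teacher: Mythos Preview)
Your proposal is correct and follows essentially the same approach as the paper: apply the one-variable lemma to each slice $x_j \mapsto u(x_1,\ldots,x_d)$ uniformly in the frozen variables, pass to the full Fourier side via Parseval in the remaining $d-1$ coordinates, and then use that $|k|>K_0$ forces $|k_j|>K_0$ for some $j$. The only cosmetic difference is the order of operations (you integrate then sum, the paper sums then integrates), which is immaterial by Fubini.
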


\begin{proof}
Observe that, for any fixed $x_1,...,x_{j - 1}, x_{j + 1}, ..., x_d,$ we may define $u_j(x_j) = u(x_1,...,x_j,...,x_d)$ and, by assumption, we have
$$\sup_{|\Im z| < \rho/4} u_j(z) + \norm{u_j(x_j)}_{L^2(dx_j)} \leq C.$$
Thus Lemma \ref{Lem:OneFreqFourieru} applies to $u_j$ and we have, for every $j,$
$$|\hat u_j (k_j)| \leq C' (|k_j| + 1)^{-1}.$$
Moreover, the constant $C'$ is independent of $j$ and $x_i, i \ne j.$ It follows that
$$\sum_{|k_j| > K_0} |\hat u_j(k_j)|^2 \leq (C')^2K_0^{-1}.$$
This may be rewritten as
$$\sum_{|k_j| > K_0} |\hat u(x_1,...,x_{j - 1}, x_{j + 1}, ..., x_d)(k_j)|^2 \leq (C')^2 K_0^{-1}.$$
We may now integrate the left hand side in the variables $x_i, i\ne j,$ and apply Parseval's identity to obtain
$$\sum_{|k_j| > K_0, k_i\in \Z, i\ne j} |\hat u(k_1,...,k_{j - 1}, k_j, k_{j + 1},...,k_d)|^2 \leq (C')^2 K_0^{-1}.$$
Since this holds for all $1 \leq j \leq d,$ we have
$$\sum_{k \in \Z^d, |k| > K_0} |\hat u(k)|^2 < (C')^2 K_0^{-1}$$
as desired.
\end{proof}

It now suffices to show that the hypothesis of these previous lemmas hold in our setting.

\begin{mylemma}\label{Lem:CoVL2}
Suppose $(A,\omega)$ is an analytic $M(2,\C)$ cocycle such that $\det(A)$ does not vanish everywhere. Moreover, suppose 
\begin{equation}\label{eq:APPEq}
\max_{1 \leq j \leq d} \sup_{x_i \in \T, i \ne j} \norm{\ln |\det A(x_1,...,x_j,...,x_d)|}_{L^2(dx_j)} < C.
\end{equation}
Then the previous two lemmas apply with $u(x) = L_N(A,x)$ with $C' = C(A)$ independent of $N.$ 
\end{mylemma}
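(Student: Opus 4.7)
The plan is to verify the two hypotheses required by Lemma \ref{Lem:OneFreqFourieru} and its multifrequency analogue: namely, that $u(x) = L_N(A,x)$ admits a plurisubharmonic extension to the strip $|\Im z_j| < \rho$ with a uniform upper bound there, and that $\sup_{x_i \in \T, i \ne j} \norm{L_N(A,\cdot)}_{L^2(dx_j)}$ is bounded uniformly in $N$. Plurisubharmonicity is immediate: $A_N(z,\omega) = \prod_{j=N-1}^0 A(z+j\omega)$ is a holomorphic $M(2,\C)$-valued function on the strip, so $\frac{1}{N}\ln\norm{A_N(z,\omega)}$ is plurisubharmonic. The sup bound follows from submultiplicativity of the norm, which gives $\norm{A_N(z,\omega)} \leq \norm{A}_\rho^N$, hence $L_N(A,z) \leq \ln \norm{A}_\rho =: C_A$ uniformly on the strip.

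The main issue, and the only nontrivial point, is producing the uniform $L^2$ bound in each variable. The difficulty is that because $\det A$ can vanish, $L_N(A,x)$ has no uniform pointwise lower bound. To replace this, I would use the elementary inequality $\norm{M}^2 \geq |\det M|$ valid for all $M \in M(2,\C)$, which yields
\begin{equation}
\ln\norm{A_N(x,\omega)} \geq \tfrac{1}{2}\ln|\det A_N(x,\omega)| = \tfrac{1}{2}\sum_{k=0}^{N-1} \ln|\det A(x + k\omega)|,
\end{equation}
and hence, combined with the upper bound,
\begin{equation}
|L_N(A,x)| \leq C_A + \frac{1}{2N} \sum_{k=0}^{N-1} \big|\ln|\det A(x+k\omega)|\big|.
\end{equation}

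From here I would take the $L^2$ norm in the variable $x_j$ (with the other variables fixed) and apply the triangle inequality term-by-term. Each summand is a translate of $\ln|\det A(\cdot)|$ by $k\omega$; the $L^2(dx_j)$ norm is invariant under the change of variables $x_j \mapsto x_j - k\omega_j$, so that
\begin{equation}
\big\|\ln|\det A(x + k\omega)|\big\|_{L^2(dx_j)} = \big\|\ln|\det A(x_1+k\omega_1,\ldots,x_j,\ldots,x_d+k\omega_d)|\big\|_{L^2(dx_j)},
\end{equation}
which by hypothesis \eqref{eq:APPEq} is bounded by $C$ uniformly in the remaining variables. Summing $N$ such terms and dividing by $2N$ produces the bound
\begin{equation}
\sup_{x_i \in \T,\, i \ne j} \norm{L_N(A,\cdot)}_{L^2(dx_j)} \leq C_A + \tfrac{1}{2} C,
\end{equation}
which is independent of $N$. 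Together with the sup bound on the strip, this puts $u = L_N(A,\cdot)$ into the framework of the previous two lemmas with a constant $C' = C(A)$ that does not depend on $N$, as claimed. No step is genuinely hard; the one point deserving emphasis is the use of the determinant inequality $\norm{M}^2 \geq |\det M|$ to trade the absent pointwise lower bound for a quantity whose $L^2$ norm is controlled by the hypothesis on $\ln|\det A|$.
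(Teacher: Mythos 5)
Your proof is correct and follows essentially the same route as the paper's: you use submultiplicativity for the uniform upper bound on the strip, the inequality $\norm{M}^2 \geq |\det M|$ for a lower bound in terms of $\ln|\det A|$, and then the triangle inequality together with translation invariance of the $L^2(dx_j)$ norm (over the supremum in the other variables) to get a bound independent of $N$. The one place you are more careful than the paper is in spelling out why the translated terms $\ln|\det A(x+k\omega)|$ still have $L^2(dx_j)$ norm bounded by $C$, which the paper leaves implicit.
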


\begin{proof}
Set $u(x) = L_N(A,x).$ It suffices to verify that $u(x)$ satisfies
\begin{equation}
\sup_{|\Im z_j| < \rho/4} u(z) + \max_{1 \leq j \leq d} \sup_{x_i \in \T, i \ne j} \norm{u(x_1,...,x_j,...,x_d)}_{L^2(x_j)} < C.
\end{equation}
Indeed, recall that, for any $A \in M(2,\C)$ with $\det (A) \ne 0,$ we have
$$\norm{A}^2 \geq |\det(A)|.$$
Since we assume $\det(A(x))$ does not vanish everywhere, we know
$$\norm{A(x)}^2 \geq |\det(A(x))|$$
for a.e. $x\in \T^d.$ Moreover, $A(x)$ has an analytic extension, continuous up to the boundary, to $|\Im z_j| < \rho$ for some $\rho > 0,$ such that, for some $M> 0, \norm{A}_\rho < M.$ Thus $$u(x) = L_N(A,x)$$ 
is a plurisubharmonic function with a plurisubharmonic extension to $|\Im z_j| < \rho$ such that $$\max_{|\Im z_j| < \rho/4} u(z) < |\ln M|.$$
Finally, we have
$$u(x) \geq \frac 1 {2N} \sum_{j = 0}^{N - 1} \ln |\det(A(x + j\omega))|$$
by properties of $\det.$ Hence
\begin{align}
\norm{u(x_1,...,x_j,...,x_d)}_{L^2(dx_j)} &\leq \max \set{|\ln M|, \norm{\frac 1 {2N} \sum_{k = 0}^{N - 1}\ln|\det(A(x + k\omega))|}_{L^2(dx_j)}}\\
&\leq \max \set{|\ln M|, \frac 1 {2N} \sum_{k = 0}^{N - 1}\norm{\ln|\det(A(x + k\omega))|}_{L^2(dx_j)}}\\
&\leq \set{|\ln M|, \frac 12|\ln C|}.
\end{align}
It follows that, for some $C,$ depending only on properties of $A,$
$$\max_{|\Im z_j| < \rho/4} u(z) + \max_{1 \leq j \leq d} \sup_{x_i \in \T, i \ne j} \int_\T |u(x_1,...,x_j,...,x_d)|^2 dx_j < C,$$
and we are done.
\end{proof}

It now suffices to ensure that \eqref{eq:APPEq} holds.

\begin{mylemma}[\cite{DuarteKleinBook} Theorem 6.3]
Suppose $A \in C_\rho^\omega(\T^d, \C)$ is such that $\det(A)$ does not vanish identically. Then there exist $\delta = \delta(A) > 0, C = C(A) < \infty,$ and a linear change of coordinates matrix, $M \in SL(d, \Z),$ such that for any $B \in C_\rho^\omega(\T^d, \C)$ such that $\norm{A - B}_\rho < \delta,$ we have $f(x) = \det(B\circ M(x))$ satisfies 
$$\max_{1 \leq j \leq d} \sup_{x_i \in \T, i \ne j} \norm{f(x_1,...,x_j,...,x_d)}_{L^2(dx_j)} < C.$$
\end{mylemma}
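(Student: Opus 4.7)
The plan is as follows. The stated conclusion is an $L^2$ bound on $f = \det(B\circ M)$ itself (not on $\ln|f|$), and this reduces immediately to a uniform $L^\infty$ bound on $f$, which in turn is a direct consequence of the fact that analytic functions on $\T^d$ with an extension continuous to a closed complex strip are bounded in sup-norm. In particular, the change of variables $M$ plays no role in producing the stated bound; one may simply take $M = I$. (The matrix $M$ becomes essential in the sister result one actually needs to verify the hypothesis of Lemma \ref{Lem:CoVL2}, namely a uniform $L^2$ bound on $\ln|f|$, but that is not what the statement above asks for.)

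Concretely, I would first take $\delta = 1$ and $M = I$. Since $A \in C_\rho^\omega(\T^d,\C)$ (or, in the setting of the paper, $M(2,\C)$-valued), $A$ has an analytic extension continuous up to the boundary of $\{|\Im z_j| \leq \rho\}$, so $\|A\|_\rho < \infty$. For any $B$ with $\|A - B\|_\rho < \delta$, the triangle inequality gives
\[
\|B\|_\rho \leq \|A\|_\rho + 1.
\]

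Next, since $\det$ is a polynomial in the entries of the matrix, we obtain a uniform pointwise bound $|\det(B(x))| \leq C_0$ for all $x$ in the strip (hence in particular for all $x \in \T^d$), where $C_0$ depends only on $\|A\|_\rho$ and the matrix size; for $M(2,\C)$-valued $B$ one may take $C_0 = 2(\|A\|_\rho + 1)^2$. Because any $M \in SL(d,\Z)$ induces a measure-preserving automorphism of $\T^d$, the bound transfers to $f$:
\[
|f(x)| = |\det(B \circ M)(x)| \leq C_0 \qquad \text{for all } x \in \T^d.
\]
Consequently, for every $1 \leq j \leq d$ and every fixed choice of the remaining variables $(x_i)_{i \neq j}$,
\[
\|f(x_1,\ldots,x_j,\ldots,x_d)\|_{L^2(dx_j)} \leq \|f\|_{L^\infty(\T^d)} \leq C_0,
\]
which is exactly the claimed estimate with $C := C_0$.

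The main obstacle: there isn't one of substance. The literal statement follows by pure sup-norm considerations, and in fact any $M \in SL(d,\Z)$ (including $M = I$) works. The genuinely delicate content of \cite{DuarteKleinBook} Theorem 6.3 lies in producing the analogous $L^2$ bound for $\ln|f|$, which requires a uniform Lojasiewicz inequality together with a carefully chosen $M$ that aligns a coordinate axis with a nontrivial direction of $\det(A)$ — so that on almost every transverse slice $\det(A\circ M)$ is not identically zero, allowing Lemma \ref{Lem:UnifLoj} to be applied in that one variable. But since the statement as worded asks only for a bound on $f$, the argument above suffices.
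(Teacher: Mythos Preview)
Your analysis is correct, and in fact the paper does not supply its own proof of this lemma: it is quoted from \cite{DuarteKleinBook} Theorem 6.3 and left unproved. Your argument establishes the statement as literally written, since $f = \det(B\circ M)$ is analytic on $\T^d$ with a bounded extension to the strip, hence bounded in sup-norm, and the slice-wise $L^2$ bound follows trivially.

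You are also right that the statement as printed is almost certainly a misprint. The paper invokes this lemma precisely to verify hypothesis \eqref{eq:APPEq} of Lemma \ref{Lem:CoVL2}, and that hypothesis is a uniform slice-wise $L^2$ bound on $\ln|\det A|$, not on $\det A$. The intended conclusion should therefore read
\[
\max_{1 \leq j \leq d}\ \sup_{x_i \in \T,\, i \ne j}\ \norm{\ln|f(x_1,\ldots,x_j,\ldots,x_d)|}_{L^2(dx_j)} < C,
\]
and it is for this version that the change of coordinates $M$ and the uniform Lojasiewicz inequality (Lemma \ref{Lem:UnifLoj}) are genuinely needed, exactly as you outline in your final paragraph.
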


This ensures Lemma \ref{Lem:CoVL2} is applicable to the cocycle $A \circ M(x).$ This implies that the argument in our paper actually applies to $A\circ M.$ However, since $M \in SL(d,\Z)$ is a constant matrix, the Lyapunov exponent for $A\circ M$ and $A$ are the same. We may assume, therefore, that $M$ is the identity matrix.

Now we turn our attention to the boosting inequality. We will derive the $d > 1$ estimate from the $d = 1$ estimate. First, we recall a known BMO estimate.

\begin{mylemma}[\cite{DuarteKleinBook} Lemma 6.8]\label{Lem:BMO1Est}
Suppose $u: \T \to \R$ is a subharmonic function with a subharmonic extension to $|\Im z| < \rho$ such that 
\begin{equation}\label{eq:BMOHypo1}
\sup_{|\Im z| < \rho/4} u(z) + \norm{u}_{L^2} \leq C.
\end{equation}
Moreover, suppose 
\begin{equation}\label{eq:BMOHypo2}
\left|\set{x\in \T: |u(x) - \int u(x) dx| > \epsilon_0}\right| < \epsilon_1.
\end{equation}
Then there exists a constant $C',$ dependent only on $C$ and $\rho,$ such that
\begin{equation}\label{eq:BMOConc}
\norm{u}_{BMO(\T)} \leq C'(\epsilon_0 + \epsilon_1^{1/2}).
\end{equation}
\end{mylemma}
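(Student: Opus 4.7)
The plan has two stages: first, I would promote the two hypotheses into an $L^1$ estimate on $u - \bar{u}$ via straightforward level-set analysis; second, I would invoke a \emph{linear} subharmonic BMO-versus-$L^1$ comparison to close the argument.

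For the first stage, write $\bar{u} := \int_\T u(x)\,dx$ and $B := \set{x \in \T : |u(x) - \bar u| > \epsilon_0}$, which by \eqref{eq:BMOHypo2} has $|B| < \epsilon_1$. I would split $\int_\T |u - \bar u|\,dx$ into the contributions of $B$ and $\T \setminus B$, bound the second by $\epsilon_0 |\T \setminus B| \leq \epsilon_0$, and apply Cauchy-Schwarz together with the $L^2$ bound $\norm{u - \bar{u}}_{L^2} \leq 2\norm{u}_{L^2} \leq 2C$ from \eqref{eq:BMOHypo1} to bound the first by $2C\epsilon_1^{1/2}$. This yields
$$\norm{u - \bar{u}}_{L^1(\T)} \leq \epsilon_0 + 2C \epsilon_1^{1/2}.$$

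For the second stage, the essential input I need is the following linear subharmonic BMO inequality: for every subharmonic $u$ satisfying \eqref{eq:BMOHypo1} there is a constant $C_0 = C_0(C, \rho)$ such that $\norm{u}_{BMO(\T)} \leq C_0 \norm{u - \bar{u}}_{L^1(\T)}$. To establish this, I would conformally map a neighborhood of $\T$ inside the strip $|\Im z| < \rho$ onto a disk $D$ and apply the Riesz representation theorem to write $u = h + R_\mu$ on $D$, with $h$ harmonic on $D$ and $R_\mu(\zeta) = \int \log|\zeta - w|\,d\mu(w)$ the logarithmic potential of a positive Borel measure $\mu$. The sup bound in \eqref{eq:BMOHypo1} controls the total mass $\mu(\bar D)$ by a constant depending only on $C$ and $\rho$ via Jensen's formula applied on a slightly larger disk, while a reverse bound of the form $\norm{u - \bar{u}}_{L^1(\T)} \gtrsim \mu(\tfrac{1}{2}\bar D)$ extracted from the same representation transfers $L^1$ smallness into smallness of the Riesz mass. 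Using the uniformly bounded $BMO(\T)$ norm of $\zeta \mapsto \log|\zeta - w|$ (with constant independent of $w \in \bar D$), integrating against $\mu$, and handling the harmonic part $h$ by standard estimates on the boundary, I would then obtain the claimed linear bound. Chaining the two stages would give the conclusion with $C' = C_0 \max\{1, 2C\}$.

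The hard part will be the second stage. A generic $L^1$-to-BMO bound is of course false, and producing a \emph{linear} (rather than, say, square-root) dependence of $\norm{u}_{BMO(\T)}$ on $\norm{u}_{L^1(\T)}$ relies essentially on the subharmonic structure and on the total-Riesz-mass bound afforded by \eqref{eq:BMOHypo1}. In particular, one must split $\mu$ according to distance from $\T$, treat the ``deep'' part via pointwise smoothness of $\log|\zeta - w|$ on $\T$, and treat the ``near'' part via an absorption argument enabled by the ambient strip bound on $u$, mirroring the technique employed in Chapter 6 of \cite{DuarteKleinBook}.
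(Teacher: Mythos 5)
Note first that the paper does not actually prove this lemma: it is stated and cited as Lemma~6.8 of \cite{DuarteKleinBook}, so there is no internal proof to compare against. Your proposal therefore has to be assessed against the standard argument in that reference (and in the Goldstein--Schlag line it descends from), and against itself.

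Your first stage is fine: splitting $\int_\T |u-\bar u|$ over $B$ and its complement, using $\epsilon_0$ on the good set and Cauchy--Schwarz with $\norm{u-\bar u}_{L^2}\le 2C$ on the bad set cleanly gives $\norm{u-\bar u}_{L^1}\le \epsilon_0 + 2C\epsilon_1^{1/2}$. The problem is the second stage, and in particular the reverse bound $\norm{u-\bar u}_{L^1(\T)}\gtrsim\mu\bigl(\tfrac12\bar D\bigr)$ on which everything hinges. As stated this is false: a Dirac mass at the \emph{centre} of the disc has potential $\log|z|$, which is identically zero on the unit circle, so one can make $\mu\bigl(\tfrac12\bar D\bigr)$ as large as the a priori mass bound allows while $\norm{u-\bar u}_{L^1(\T)}=0$. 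The quantity that actually controls the BMO oscillation of the potential on $\T$ is the Riesz mass in a thin collar around $\T$, not the mass near the centre, and the needed lower bound $\norm{u-\bar u}_{L^1}\gtrsim\mu(\text{collar})$ is exactly the content one would have to prove; your sketch does not do so, nor does it rule out cancellation between the potential and the harmonic part $h$. Beyond this, the linear $L^1\to\mathrm{BMO}$ inequality you invoke is itself a strong claim: even for the harmonic piece alone, the natural Fourier-side estimate (exponential decay of $\hat h(k)$ from the strip bound, $|\hat h(k)|\le\norm{h-\bar h}_{L^1}$) yields $\norm{h-\bar h}_{L^\infty}\lesssim\rho^{-1}\norm{h-\bar h}_{L^1}\log\bigl(1/\norm{h-\bar h}_{L^1}\bigr)$, so at minimum you would need to argue why BMO does not inherit that logarithmic loss, and a log loss would already make your chained estimate strictly weaker than $C'(\epsilon_0+\epsilon_1^{1/2})$.

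Conceptually, passing to $\norm{u-\bar u}_{L^1}$ discards precisely the information that makes the cited proof work. Duarte--Klein (following Goldstein--Schlag) estimate $\frac{1}{|I|}\int_I|u-u_I|$ directly from the level-set hypothesis on each scale: for intervals above a threshold length, the level-set estimate plus Cauchy--Schwarz gives $\epsilon_0 + \epsilon_1^{1/2}$ immediately; for short intervals one uses the Riesz representation and the uniform total-mass bound to control local oscillation. The two terms $\epsilon_0$ and $\epsilon_1^{1/2}$ in \eqref{eq:BMOConc} are the traces of that two-scale analysis. Your route through $L^1$ cannot recover the level-set localization (Chebyshev from $L^1\approx\epsilon_1^{1/2}$ does not return measure $\lesssim\epsilon_1$ at height $\epsilon_0$), so without a correct and linear second-stage lemma the argument does not close.
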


\begin{mylemma}
Suppose $u: \T \to \R$ is a subharmonic function with a subharmonic extension to $|\Im z| < \rho$ such that 
\begin{equation}
\sup_{|\Im z| < \rho/4} u(z) + \norm{u}_{L^2} \leq C.
\end{equation}
Moreover, suppose 
\begin{equation}\label{eq:BMOHypo3}
\norm{u - \int u dx}_{L^1(\T)} < \epsilon.
\end{equation}
Then there exists a constant $c,$ dependent only on $C$ and $\rho,$ such that
\begin{equation}\label{eq:BMOConc2}
\left|\set{x \in \T: \left|u(x) - \int_\T u \right| > \epsilon^{1/6}}\right| < e^{c \epsilon^{-1/6}}.
\end{equation}
\end{mylemma}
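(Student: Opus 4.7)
The plan is to chain together two standard tools: the BMO estimate already provided as Lemma \ref{Lem:BMO1Est}, and the John--Nirenberg inequality. The hypothesis only gives control of $u - \int u$ in $L^1$, while Lemma \ref{Lem:BMO1Est} wants a pointwise deviation bound off of a small set, so the first step is to use Chebyshev to convert between the two. Explicitly, for any threshold $t > 0$, Chebyshev gives
\begin{equation*}
\left|\set{x \in \T : \left|u(x) - \textstyle\int u\right| > t}\right| \leq \frac{\epsilon}{t}.
\end{equation*}
Thus the hypothesis \eqref{eq:BMOHypo2} of Lemma \ref{Lem:BMO1Est} is satisfied with $\epsilon_0 = t$ and $\epsilon_1 = \epsilon/t$. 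The hypothesis \eqref{eq:BMOHypo1} is built into the assumption of the lemma we are proving.

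Applying Lemma \ref{Lem:BMO1Est} then yields
\begin{equation*}
\norm{u - \textstyle\int u}_{BMO(\T)} \leq C'\left(t + \sqrt{\epsilon/t}\,\right).
\end{equation*}
To make this as small as possible I optimize in $t$: the two terms balance when $t = \sqrt{\epsilon/t}$, i.e., $t = \epsilon^{1/3}$. With this choice,
\begin{equation*}
\norm{u - \textstyle\int u}_{BMO(\T)} \leq 2 C' \epsilon^{1/3}.
\end{equation*}

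The second step is to invoke the John--Nirenberg inequality on $\T$: there exist absolute constants $A, B > 0$ such that for any $f \in BMO(\T)$ with zero mean and any $\lambda > 0$,
\begin{equation*}
\left|\set{x \in \T : |f(x)| > \lambda}\right| \leq A \exp\left(-B \lambda / \norm{f}_{BMO}\right).
\end{equation*}
Applied to $f = u - \int u$ with $\lambda = \epsilon^{1/6}$, this produces
\begin{equation*}
\left|\set{x \in \T : \left|u(x) - \textstyle\int u\right| > \epsilon^{1/6}}\right| \leq A \exp\!\left(-\frac{B \epsilon^{1/6}}{2 C' \epsilon^{1/3}}\right) = A \exp\!\left(-\tfrac{B}{2C'}\,\epsilon^{-1/6}\right),
\end{equation*}
which is the desired bound (with the exponential being decaying; the constant $c$ in the statement is to be understood with the appropriate sign, consistent with the analogous Lemma \ref{Lem:BMOIneq}).

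No step here is particularly hard once the BMO lemma and John--Nirenberg are in hand; the only content is the correct optimization of the Chebyshev threshold, which is what produces the exponent $1/3$ in the BMO bound and, after balancing against the John--Nirenberg decay rate, the exponent $1/6$ in both the deviation threshold and the rate of exponential decay. The main thing to be careful about is keeping track of where the constants depend only on $C$ and $\rho$ (inherited from Lemma \ref{Lem:BMO1Est}) versus absolute constants (from John--Nirenberg), so that the final $c$ depends only on $C$ and $\rho$ as claimed.
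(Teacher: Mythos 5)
Your argument is correct and follows essentially the same route as the paper: Chebyshev to convert the $L^1$ bound into a deviation estimate, then Lemma \ref{Lem:BMO1Est} to get $\norm{u}_{BMO} \lesssim \epsilon^{1/3}$, then John--Nirenberg at level $\lambda = \epsilon^{1/6}$. The paper simply hard-codes the optimal threshold $\epsilon_0 = \epsilon^{1/3}$, $\epsilon_1 = \epsilon^{2/3}$ where you derive it by balancing the two terms; you are also right that the sign in the exponent of the stated bound must be read as decaying, matching the analogous Lemma \ref{Lem:BMOIneq}.
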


\begin{proof}
Let $\epsilon_0 = \epsilon^{1/3}$ and $\epsilon_1 = \epsilon^{2/3}.$ Then 
$$\left|\set{x\in \T: |u(x) - \int u(x) dx| > \epsilon_0}\right| < \epsilon_1,$$
and Lemma \ref{Lem:BMO1Est} is applicable. We obtain
\begin{equation}
\norm{u}_{BMO(\T)} \leq C'\epsilon^{1/3}.
\end{equation}
Now recall the John-Nirenberg inequality:
\begin{equation}
\left|\set{x\in \T: \left|f - \int_\T f \right| > \lambda}\right| < c_1 e^{-c_2\frac{\lambda}{\norm{f}_{BMO}}}.
\end{equation}
Setting $f = u$ and $\lambda = \epsilon^{1/6}$ completes our proof.
\end{proof}

We can now apply this in each variable to deduce an analogue for $d > 1$ (c.f. Lemma 1.27 \cite{BourgainContinuity}).

\begin{mylemma}
Suppose $u: \T^d \to \R$ is a plurisubharmonic function with a plurisubharmonic extension to $|\Im z_j| < \rho$ such that
\begin{equation}
\sup_{|\Im z_j| < \rho/4} u(z) + \max_{1 \leq j \leq d} \sup_{x_i \in \T, i \ne j} \norm{u(x_1,...,x_j,...,x_d)}_{L^2(dx_j)} < C.
\end{equation}
Assume, moreover, that 
$$\norm{u - \int_{\T^d} u }_{L^1(\T^d)} < \epsilon.$$
Then there exist constants $c,$ dependent only on $C$ and $\rho,$ and $a = a(d),$ such that
\begin{equation}
\left|\set{x \in \T: \left|u(x) - \int_{\T^d} u\right| > \epsilon^{a}}\right| < e^{c \epsilon^{-a}}.
\end{equation}
\end{mylemma}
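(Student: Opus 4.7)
The approach mirrors the proof of the preceding one-dimensional lemma: convert the $L^1$-smallness hypothesis into a sublevel-set measure estimate via Chebyshev's inequality, upgrade this to a BMO bound, and apply the multivariable John--Nirenberg inequality to extract the exponential decay. Setting $J := \int_{\T^d} u$, Chebyshev immediately gives $|\{x \in \T^d : |u(x) - J| > \epsilon^{1/3}\}| < \epsilon^{2/3}$ for free.

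The central step is to establish a multivariable analog of Lemma \ref{Lem:BMO1Est}: under the hypothesis of the lemma, a measure estimate $|\{|u - J| > \epsilon_0\}| < \epsilon_1$ forces $\|u\|_{BMO(\T^d)} \leq C(\epsilon_0 + \epsilon_1^{1/2})^{b(d)}$ for some dimension-dependent $b(d) > 0$. I would obtain this slicewise: for each coordinate direction $j$, the hypothesis supplies a uniform slice $L^2$ bound, while Fubini combined with Chebyshev converts the global sublevel-set estimate into a slicewise sublevel-set estimate on a large ``good'' subset of $\T^{d-1}$ of slices. The one-dimensional Lemma \ref{Lem:BMO1Est} then applies on each good slice to give slicewise BMO control. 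Assembling these slicewise BMO bounds in every coordinate direction, together with the uniform sup-bound on the strip extension to absorb the ``bad'' slices, yields a full $\T^d$ BMO bound on $u$. With this bound in hand, the multivariable John--Nirenberg inequality
\[
|\{|u - J| > \lambda\}| < C_1 \exp(-C_2 \lambda/\|u\|_{BMO(\T^d)})
\]
applied with $\lambda = \epsilon^a$ for a sufficiently small $a = a(d) > 0$ (chosen so that the ratio $\lambda / \|u\|_{BMO}$ grows as a positive power of $\epsilon^{-1}$) yields the stated exponential decay.

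The main obstacle is the control of the bad slices in each coordinate direction: Fubini alone ensures only that these form a set of polynomially small measure in $\epsilon$, whereas the conclusion demands an exponentially small bound. Resolving this requires combining the slicewise 1D BMO estimates with the uniform plurisubharmonic sup and $L^2$ controls supplied by the hypothesis, so that the contribution of the exceptional slices to the global BMO seminorm remains under control. This careful bad-slice bookkeeping, together with the slicewise application of Lemma \ref{Lem:BMO1Est} and multivariable John--Nirenberg, is essentially the content of Lemma 1.27 in \cite{BourgainContinuity}, and is the key ingredient that produces the full-dimensional exponential measure estimate.
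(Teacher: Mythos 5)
The paper provides no written proof of this lemma: after proving the one-dimensional version, it simply remarks that one can ``apply this in each variable'' and refers to Lemma~1.27 of \cite{BourgainContinuity}. Your proposal is in the same spirit and cites the same reference, so there is no actual proof in the paper to compare against line by line. That said, the specific route you sketch has a gap that your own remark about bad slices actually makes fatal, and it is worth spelling out why.

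You propose to establish $\|u\|_{BMO(\T^d)} \lesssim (\epsilon_0 + \epsilon_1^{1/2})^{b(d)}$ by applying Lemma~\ref{Lem:BMO1Est} slicewise and then assembling the slicewise BMO bounds into a global one. The assembly step is fine in the abstract (uniform slicewise BMO in every coordinate direction does control the cube-BMO seminorm on $\T^d$), but the uniform slicewise BMO estimate is exactly what you cannot get. Fubini/Chebyshev only shows that the set of slices on which Lemma~\ref{Lem:BMO1Est} yields a small slicewise BMO norm has complement of measure at most a power of $\epsilon$; on the complementary ``bad'' slices the a priori $\sup$ and $L^2$ bounds only yield a slicewise BMO norm of order $O(1)$. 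Since the rectangular/little-BMO seminorm is a supremum over slices, not an average, it is then $O(1)$, not $O(\epsilon^{b(d)})$, and John--Nirenberg applied with $\lambda = \epsilon^a$ gives $e^{-c\epsilon^a}$, which tends to $1$ rather than to $0$. In other words, the bad-slice set of polynomial measure that you flag as the ``main obstacle'' is not a bookkeeping nuisance in your scheme; it simply destroys the global BMO bound you need. The same issue shows that the naive iterate-over-dimensions version of your argument produces a polynomial bound of the form $|\{|u - \langle u\rangle| > \epsilon^{a}\}| \lesssim \epsilon^{b}$, not the exponential bound $e^{-c\epsilon^{-a}}$ claimed here.

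So the missing idea is genuinely an idea, not just bookkeeping: one needs some mechanism, specific to plurisubharmonic functions (Riesz representation plus a Cartan-type estimate applied coherently across variables, as in Bourgain's Lemma~1.27), that prevents the bad slices from having $O(1)$ deviation in the first place, rather than merely controlling their measure by Chebyshev. Deferring to Bourgain's lemma is consistent with what the paper does, but the intermediate steps you outline --- slicewise BMO assembly plus multivariable John--Nirenberg --- do not by themselves reach the stated conclusion, and should not be presented as though they reduce the problem to bookkeeping.
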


\section*{Acknowledgement}
We would like to thank S. Jitomirskaya for many fruitful discussions and for comments on an earlier version of our manuscript. We would also like to thank W. Liu for comments on an earlier version of our manuscript. This research was partially supported by NSF DMS-2052572, DMS-2052899, DMS-2155211, and Simons 681675.

\bibliographystyle{alpha} 
\bibliography{ContinuousL2022}
\end{document}